\documentclass{amsart}
\pdfoutput=1

\usepackage{amsmath,amssymb,amsthm}
\usepackage{amsthm}
\usepackage{graphicx}
\usepackage{float}

\theoremstyle{plain}
\newtheorem{thm}{Theorem}[section]
\newtheorem{lem}[thm]{Lemma}
\newtheorem{prop}[thm]{Proposition}
\newtheorem{cor}[thm]{Corollary}
\newtheorem{que}[thm]{Question}

\theoremstyle{definition}
\newtheorem{defn}[thm]{Definition}

\theoremstyle{remark}
\newtheorem{rem}[thm]{Remark}

\newcommand{\gsw}{\mathcal G^*_\mathrm{weak}}

\newcommand{\Diff}{\mathrm{Diff}}
\newcommand{\intr}{\mathrm{int}}
\newcommand{\AKMR}{Auckly-Kim-Melvin-Ruberman}
\newcommand{\SO}{\mathrm{SO}}
\newcommand{\finset}[1]{{\{1,\dots,#1\}}}

\setlength{\oddsidemargin}{15pt}
\setlength{\evensidemargin}{15pt}
\setlength{\textwidth}{430pt}
\setlength{\voffset}{-30pt}
\setlength{\textheight}{630pt}

\newenvironment{tightenum}{
  \begin{enumerate}
    \renewcommand{\theenumi}{(\arabic{enumi})}
    \renewcommand{\labelenumi}{\theenumi}
    \setlength{\itemsep}{0pt}
    \setlength{\parskip}{0pt}
}{\end{enumerate}}

\begin{document}

\title{Infinite nonabelian corks}
\author{Hiroto Masuda}
\address{
  Department of Mathematics Faculty of Science and Technology, Keio University
  Yagami Campus: 3-14-1 Hiyoshi, Kohoku-ku, Yokohama, 223-8522, Japan}
\email{hiroto.masuda@keio.jp}

\begin{abstract}
  We construct $G$-corks for any extension $G$ of $\mathbb Z^m$ by any finite subgroup of $\SO(4)$
and weakly equivariant $G$-corks for any extension $G$ of $\mathbb Z^m$ by any finite solvable group.
In particular, this is the first example of $G$-corks for an infinite nonabelian group $G$
and answers a question by Tange.
The construction is a combination of previous results by \AKMR{}, Gompf, and Tange.
Using Gompf's results about exotic $\mathbb R^4$'s, we give an application to construct exotic $\mathbb R^4$'s
whose diffeotopy group contains all poly-cyclic groups.

\end{abstract}

\maketitle

\section{Introduction}
\label{sec:intro}

Understanding all smooth structures on a given topological 4-manifold is a fundamental problem in 4-manifold theory.
In dimension 4, manifolds often have a lot of exotic structures.
For example, there are uncountably many ones on $\mathbb R^4$ (\cite{taubes1987}).
Closed $4$-manifolds can also have infinite ones.
These contrast sharply with facts in other dimensions (cf. \cite[Theorem 1.1.8, 1.1.9]{gompf_stipsicz}).
The richness of exotic structures in dimension $4$ is usually proved
with the result of Freedman about topological $4$-manifolds
and that of Donaldson about smooth $4$-manifolds (cf. \cite{gompf_stipsicz}).

In 4-manifold theory, surgeries to create exotic structures on manifolds are studied actively.
A cork twist is one of them.
A cork is the pair of a compact contractible 4-manifold $C$ and an involution $f$ on its boundary
which does not extend to a diffeomorphism on the whole manifold.
For a closed 4-manifold $X$ and an embedding of $C$ into $X$,
we consider a new manifold $X'$ obtained by cutting $C$ out and regluing it by the boundary diffeomorphism $f$.
This operation is called a cork twist.
The two manifolds $X$ and $X'$ are always homeomorphic (see Theorem \ref{freedman_homeo}).
Curtis-Freedman-Hsiang-Stong and Matveyev proved in \cite{curtis1996} and \cite{matveyev1996}
that for any two simply connected homeomorphic 4-manifolds,
there exists an embedded cork in one of them such that the other is obtained by the cork twist.
We will refer to this as the cork theorem.
There are many studies about corks and their applications from various viewpoints
(cf. \cite{akbulut1991}, \cite{akbulut_yasui2008}, \cite{akbulut_yasui2009},
\cite{yasui2019}, 
\cite{auckly2015}, \cite{yasui2015}).

The notion of a cork is extended to that of a $G$-cork and a weakly equivariant $G$-cork,
where $G$ is a group.
Instead of an involution, a $G$-cork uses a $G$-action on the boundary and 
a weakly equivariant $G$-cork uses a group homomorphism from $G$ to the diffeotopy group
\footnote{
  The diffeotopy group of a manifold $M$ is $\pi_0(\Diff(M))$.
  In other words, it is a mapping class group in the smooth category.}
of the boundary.
Tange first found $\mathbb Z_n$-corks in \cite{tange2017}.
\AKMR{} constructed $G$-corks for any finite subgroup $G$ of $\SO(4)$ 
and weakly equivariant $A$-corks for any finite abelian group $A$
\footnote{The construction in \cite[Theorem B]{auckly2017} can be used for any finite abelian group $A$.}
with an ingenious trick in \cite{auckly2017}.
Gompf constructed $\mathbb Z$-corks in \cite{gompf2017} and
this work was extended to $\mathbb Z^m$-corks by Tange in \cite{tange2016arxiv}.

In spite of these active studies on $G$-corks,
a $G$-cork or a weakly equivariant $G$-cork for an infinite and nonabelian group $G$ has not been found,
as Tange noted in \cite[Question 1.5]{tange2016arxiv}.
We deal with this problem.
The aim of this research is determining large classes of infinite nonabelian groups
for each $G$ in which there exists a $G$-cork or a weakly equivariant $G$-cork.
Additionally, we consider if corks have an effective embedding, that is,
an embedding into a closed 4-manifold whose cork twist yields pairwise homeomorphic but nondiffeomorphic 4-manifolds.

This theme is worth studying for the following reasons.
First, it is interesting from the viewpoint of the diffeomorphism group of a homology $3$-sphere.
The boundary of a $G$-cork (resp. a weakly equivariant $G$-cork) is a homology $3$-sphere and
its diffeomorphism group (resp. its diffeotopy group) contains a subgroup isomorphic to $G$.
Secondly, cork theory is closely related to the diffeotopy group of an exotic $\mathbb R^4$ by Gompf (\cite{gompf2018}).
Progress about cork theory may bring new results about exotic $\mathbb R^4$'s.
Indeed, (the proof of) our main theorem has an interesting application about them (Theorem \ref{thm:app:pc}).
Thirdly, Tange pointed out that cork twists define a nontrivial structure on the set of all smooth structures on a given $4$-manifold.
He showed that a $\mathbb Z$-cork analog of the cork theorem is false in \cite{tange2016nonext} (cf. \cite{yasui2019}).
Moreover, Melvin-Schwartz showed that a $\mathbb Z_n$-cork analog is true in \cite{melvin_schwartz}.
Thus we should consider $G$-corks and weakly equivariant $G$-corks for an infinite group $G$ to reveal this structure.

We state our main theorem and an outline of its proof.
First, we remark that we can not extend the trick in \cite{auckly2017} directly to the case of infinite groups.
Finiteness of a group is inherent in it because the resulting $G$-cork is the boundary sum of $|G|$ copies of a cork.
However, substituting Gompf's cork for the cork in the trick gives extra $\mathbb Z$-actions on the resulting $G$-cork.
We analyze the action generated by these $G$- and extra $\mathbb Z$-actions in detail
and find a class of infinite nonabelian groups $G$ with $G$-corks. 
Moreover, we see that we can iterate our idea in the case of weakly equivariant $G$-corks.
With group theoretical considerations,
it shows that there exist weakly equivariant $G$-corks for $G$ in a quite large class of nonabelian groups.

\begin{thm}
  \label{thm:main}
Let $G$ be any extension of $\mathbb Z^m$ by any finite group $H$ for any integer $m\ge0$.
\begin{tightenum}
\item\label{thm:main:eqv}
  If $H$ is a (finite) subgroup of $\SO(4)$, there exists a $G$-cork.
\item\label{thm:main:eqvgen}
  There exists a generalized $G$-cork.
\item\label{thm:main:weak}
  If $H$ is solvable, there exists a weakly equivariant $G$-cork.
\item\label{thm:main:weakStein}
  If $H$ is solvable, there exists a Stein weakly equivariant $H$-cork.
\end{tightenum}
\end{thm}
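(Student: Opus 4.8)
The plan is to combine the equivariant boundary-sum trick of \AKMR{} with the infinite-order corks of Gompf and Tange, and then to reduce the problem for a general extension $G$ to a purely group-theoretic statement about wreath products. Recall that the \AKMR{} construction of an $H$-cork for a finite $H\le\SO(4)$ takes an equivariant boundary connected sum of $|H|$ copies of a fixed cork, with $H$ acting by permuting the summands along its regular representation. The new observation is that if one feeds Gompf's $\mathbb Z$-cork, or Tange's $\mathbb Z^m$-cork, into this trick in place of an ordinary cork, then each of the $|H|$ summands carries its own $\mathbb Z^m$-action while $H$ permutes the summands. The first step is to check that the group generated by these $|H|$ commuting $\mathbb Z^m$-actions together with the permutation $H$-action is exactly the (permutational, regular) wreath product $\mathbb Z^m\wr H=(\mathbb Z^m)^{|H|}\rtimes H$, acting on the boundary of the contractible manifold $C$ obtained by boundary-summing. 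Since $\mathbb Z^m\wr H$ is itself an extension of $\mathbb Z^{m|H|}$ by $H$, this already produces one cork in the desired family.

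Parts \ref{thm:main:eqv} and \ref{thm:main:eqvgen} then follow by group theory. By the Kaloujnine--Krasner embedding theorem, every extension $G$ of $\mathbb Z^m$ by the finite group $H$ embeds as a subgroup of $\mathbb Z^m\wr H$. I would restrict the $\mathbb Z^m\wr H$-action on $\partial C$ to the image of $G$: since no nontrivial element of $\mathbb Z^m\wr H$ extends over $C$, the same holds for the nontrivial elements of the subgroup $G$, so we obtain a $G$-cork. For \ref{thm:main:eqv} the hypothesis $H\le\SO(4)$ is precisely what makes the \AKMR{} trick yield an honest smooth-action cork; for \ref{thm:main:eqvgen} the same argument is run with the generalized-cork version of the trick, which is available for an arbitrary finite $H$ at the cost of passing to generalized corks.

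The weakly equivariant statement \ref{thm:main:weak} is obtained by iterating this construction, working throughout at the level of the diffeotopy group $\pi_0(\Diff(\partial C))$ rather than with honest diffeomorphisms. The base case is the Gompf--Tange $\mathbb Z^m$-cork. The inductive step takes a weakly equivariant $K$-cork and, applying the boundary-sum trick with a finite abelian group $A$ permuting $|A|$ copies of it, produces a weakly equivariant $K\wr A$-cork; the abelian-by-abelian nature of the step is what lets it go through, using \AKMR{}'s weakly equivariant construction for finite abelian groups in place of the $\SO(4)$-hypothesis. Given the short exact sequence $1\to\mathbb Z^m\to G\to H\to 1$ with $H$ finite solvable, a subnormal series of $H$ with finite abelian quotients $A_1,\dots,A_n$ pulls back to a series of $G$ with bottom term $\mathbb Z^m$ and the same abelian quotients, so repeated Kaloujnine--Krasner embeds $G$ into the left-normed iterated wreath product $(\cdots(\mathbb Z^m\wr A_1)\wr\cdots)\wr A_n$. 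Realizing this iterated wreath product by the iterated construction and restricting the induced homomorphism to $G$ gives the weakly equivariant $G$-cork. Statement \ref{thm:main:weakStein} is the same iteration carried out with Stein building blocks: one starts from a Stein weakly equivariant finite abelian cork and verifies that the equivariant boundary sum preserves a Stein structure, so the result of the iteration is again Stein; since an $H$-cork requires no Gompf $\mathbb Z^m$-factor, no non-Stein ingredient enters.

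The step I expect to be the main obstacle is showing that the cork (non-extension) property, and more importantly the effectiveness of the associated twists, survives both the iterated wreath-product construction and the restriction to the embedded subgroup $G$. Concretely, one must argue that distinct elements of $G\subseteq(\cdots(\mathbb Z^m\wr A_1)\wr\cdots)\wr A_n$ induce distinct classes in the diffeotopy group and --- for an effective embedding --- that their cork twists remain pairwise nondiffeomorphic; tracking the invariants used to distinguish the twists through several successive boundary sums, and verifying that they do not collapse, is the delicate part of the argument.
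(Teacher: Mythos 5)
Your treatment of parts \ref{thm:main:eqv} and \ref{thm:main:eqvgen} is essentially the paper's proof (Section \ref{sec:cork}): substitute Gompf's $\mathbb Z$-cork into the \AKMR{} boundary-sum trick, invoke Tange's disjoint-embedding theorem (Theorem \ref{emb_of_z_cork}) for effectiveness of the simultaneous twists, verify via Proposition \ref{homom_prop} that the $|H|$ commuting $\mathbb Z^m$-actions together with the permuting $H$-action generate $\mathbb Z^m\wr H$ in $\Diff(\partial\mathbb T)$, and reduce the general extension $G$ to this case by Kaloujnine--Krasner (Theorem \ref{kk_thm}), restricting the action to the embedded copy of $G$. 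You omit only the bookkeeping that makes the twist count injective on group elements --- the shrunk embedding $s'(1_H,1)$ pre-twisted once and the doubled exponents $f^{2n_i}$, which give $Y^\iota_{(F,h)}\cong X(2F+\delta_{(h,1)})$ so that distinct $(F,h)$ yield distinct manifolds --- but these are standard parts of the trick you cite.

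For parts \ref{thm:main:weak} and \ref{thm:main:weakStein}, however, your inductive step --- passing from a weakly equivariant $K$-cork to a weakly equivariant $K\wr A$-cork by boundary-summing $|A|$ copies --- has a genuine gap, and it is precisely the one your closing paragraph gestures at without resolving. Two distinct things break. First, to run the trick on $|A|$ copies you need a closed $4$-manifold containing $|A|$ disjoint embeddings of the $K$-cork whose simultaneous twists by arbitrary tuples are pairwise nondiffeomorphic; this multi-embedding effectiveness is strictly stronger than the cork property and is supplied in the paper only for the specific corks $C(r,s;m)$ (Theorem \ref{emb_of_z_cork}) and $\mathbb S$ (Theorem \ref{emb_of_2_cork}) --- your induction furnishes no such statement for the cork produced at the previous stage. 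Second, the $K\wr A$-action you would obtain permutes the summands only through the quotient $K\wr A\rightarrow A$, so the entire base group $K^A$ fixes every summand; the next round of wreathing needs a summand whose orbit under the whole group is free (the paper's condition (P2)), and no such summand exists. The paper's proof is structured around exactly these two failures: it never iterates on corks but on \emph{blocks} (copies of $B^4$ with marked boundary balls carrying only diffeotopy-permutation data), wreathing blocks in Lemma \ref{wreath_lemma} (which yields only (P1)), restoring (P2) by the tree construction of Lemma \ref{p1_to_p2}, seeding the induction for $\mathbb Z_n$ via the Dehn-twist relation in Lemma \ref{zn_case}, and attaching the actual corks --- Gompf's, with Tange's effectiveness, or the Stein \AKMR{} cork --- in a single final application of the trick (Lemma \ref{weak_cork_lem}). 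Relatedly, the paper's group-theoretic reduction adjoins the $\mathbb Z^m$ only once, via $G\hookrightarrow\mathbb Z^m\wr H\hookrightarrow\mathbb Z^m\wr(\mathbb Z_{p_1}\wr\dots\wr\mathbb Z_{p_r})$, rather than your interleaved tower with $\mathbb Z^m$ innermost, precisely so that the infinite-order twisting enters only at that last step; without the block formalism and the (P1)-to-(P2) upgrade, your induction does not close.
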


\noindent A generalized $G$-cork is a $G$-cork which can be non-contractible.
The term `extension' means that there exists an exact sequence $1\rightarrow\mathbb Z^m\rightarrow G\rightarrow H\rightarrow1$.

We add a few remarks to the theorem.
All corks in the theorem have an effective embedding into a closed $4$-manifold.
The corks in \ref{thm:main:eqv} and \ref{thm:main:weak} (resp. \ref{thm:main:weakStein})
are boundary sums of copies of Gompf's cork (resp. Akbulut's cork).
Since whether Gompf's corks admit Stein structures is unsolved (\cite[Remarks (b)]{gompf2017_inf_cork_hdl}),
we do not know whether the corks in \ref{thm:main:eqv} and \ref{thm:main:weak} are Stein.

The theorem implies the following two facts.
First, we have a $\mathbb Z_2*\mathbb Z_2$-cork
since $\mathbb Z_2*\mathbb Z_2$ is an extension of $\mathbb Z$ by $\mathbb Z_2$.
This is the first example of $G$-corks for a free product $G$ of two nontrivial groups.
Secondly, it follows that for any finite solvable group $G$,
there exists a homology $3$-sphere admitting a Stein fillable contact structure
whose diffeotopy group contains $G$. 

We explain how cork theory is applied to exotic $\mathbb R^4$'s and what our theorem implies.
There is an open analog
of the cork theorem,
which twists an embedded exotic $\mathbb R^4$ in a manifold to change its smooth structure instead of a cork
(cf. \cite[Theorem 9.2.18 and 9.3.1]{gompf_stipsicz}).
Using this similarity of corks and exotic $\mathbb R^4$'s,
Gompf imported the trick in \cite{auckly2017} to the context of exotic $\mathbb R^4$'s
and constructed uncountably many exotic $\mathbb R^4$'s
whose diffeotopy group contains interesting subgroups,
such as the direct product of countable copies of $\mathbb Q$ and the countably generated free group (\cite{gompf2018}).
However, groups with torsion elements were not dealt with very much.
It is natural to ask what finite groups diffeotopy groups of exotic $\mathbb R^4$'s can contain.
Combining Gompf's idea with our theorem, we get the following theorem,
which gives a partial answer to this problem.

\begin{thm}
  \label{thm:app:pc}
  There exists an exotic $\mathbb R^4$ whose diffeotopy group contains any poly-cyclic group.
\end{thm}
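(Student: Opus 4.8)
The plan is to transport the iterative construction underlying the proof of Theorem~\ref{thm:main} into the open setting of exotic $\mathbb{R}^4$'s, following Gompf's method in \cite{gompf2018}. Gompf imported the \AKMR{} trick to exhibit exotic $\mathbb{R}^4$'s whose diffeotopy groups contain prescribed \emph{torsion-free} subgroups (free groups, products of copies of $\mathbb{Q}$); the genuinely new ingredient needed for poly-cyclic groups is torsion, and this is exactly what the torsion-sensitive, iterated version of the trick in our proof of Theorem~\ref{thm:main}\ref{thm:main:weak} supplies. Wherever the cork version produces a compact (generalized) cork carrying a weakly equivariant $G$-action on its boundary, the open analog of the cork theorem (\cite[Theorems~9.2.18 and 9.3.1]{gompf_stipsicz}) lets us instead produce a single exotic $\mathbb{R}^4$ together with diffeomorphisms realizing $G$ in its diffeotopy group. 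The key gain in passing to the open setting is that we are no longer constrained to keep the total space a contractible, or even compact, cork, so the construction may be iterated an arbitrary finite number of times—which is what lets us reach all poly-cyclic groups rather than only the virtually-$\mathbb{Z}^m$ groups of Theorem~\ref{thm:main}.

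I would first fix the group-theoretic skeleton. A poly-cyclic group $G$ admits a subnormal series
\[
  1 = G_0 \triangleleft G_1 \triangleleft \dots \triangleleft G_n = G
\]
in which every quotient $G_i/G_{i-1}$ is cyclic, so that $G_i$ is an extension of $G_{i-1}$ by a cyclic group, either $\mathbb{Z}$ or a finite $\mathbb{Z}_{k_i}$. Since $G$ is poly-cyclic it is in particular solvable, so at every stage the solvability hypothesis of Theorem~\ref{thm:main}\ref{thm:main:weak} is automatically met. The proof then proceeds by induction on the length $n$, the case $n=1$ being a single cyclic group: the infinite factor is realized by Gompf's $\mathbb{Z}$-construction (\cite{gompf2017}, \cite{gompf2018}), and the finite factor by importing the \AKMR{} boundary-sum trick into the open setting exactly as in the proof of Theorem~\ref{thm:main}.

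For the inductive step, assume $G_{i-1}$ has been realized inside the diffeotopy group of an exotic $\mathbb{R}^4$. I would adjoin one new generator $t_i$ for the cyclic quotient $G_i/G_{i-1}$ using the mechanism analyzed in our proof: the extra $\mathbb{Z}$-action furnished by Gompf's cork provides an infinite-order generator, while the \AKMR{} trick provides a finite-order one. The two defining data of the extension must be encoded by diffeomorphisms, namely the conjugation relation $t_i^{-1}\,g\,t_i = \phi_i(g)$ for $g\in G_{i-1}$, realizing the automorphism $\phi_i$ that determines the extension, and, in the finite case, the power relation $t_i^{k_i} =$ (the prescribed element of $G_{i-1}$). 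One must then check that these relations, imposed at every level of the series, assemble into a single honest homomorphism from $G$ into the diffeotopy group of one fixed exotic $\mathbb{R}^4$.

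The main obstacle is precisely this inductive realization of the extension relations in the open setting, with the smooth structure kept exotic throughout. The delicate point in Theorem~\ref{thm:main} was the compatibility of the $\mathbb{Z}^m$-action with the finite action on a single cork; here that analysis must be re-run at each of the $n$ stages of the poly-cyclic series, and the new generator $t_i$ must induce by conjugation an automorphism $\phi_i$ that can be arbitrarily complicated (for instance a hyperbolic automorphism of a $\mathbb{Z}^2$ subgroup, as occurs already in $\mathbb{Z}^2\rtimes\mathbb{Z}$). I expect the requisite automorphisms to be supplied by the natural symmetries of Gompf's periodic-end construction—the structure already carrying the $\mathbb{Z}^m$ of shifts (\cite{gompf2017}, \cite{tange2016arxiv}) on which such automorphisms act—while solvability of $G$ guarantees, as in Theorem~\ref{thm:main}\ref{thm:main:weak}, that the successive conjugation actions remain compatible. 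Verifying that each stage preserves exoticness via the open cork theorem, and that the finitely many stages glue into one homomorphism into one diffeotopy group, is the step demanding the most care.
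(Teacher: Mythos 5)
Your proposal correctly identifies the right raw materials (Gompf's machinery from \cite{gompf2018}, the \AKMR{} trick for torsion, Gompf's and Tange's $\mathbb Z^m$-constructions), but the assembly plan has a genuine gap, and it is exactly the step you yourself flag as ``demanding the most care.'' You propose to realize a poly-cyclic $G$ by induction along its subnormal series, adjoining at each stage a generator $t_i$ that must satisfy the conjugation relations $t_i^{-1}gt_i=\phi_i(g)$ and, in the finite case, a power relation $t_i^{k_i}=(\text{prescribed element of }G_{i-1})$. Nothing in the available constructions supplies such a $t_i$. The diffeomorphisms produced by the \AKMR{}/Gompf tricks either have disjoint supports (hence commute) or permute disjoint copies of a fixed piece; conjugation by them can only permute coordinates of an already-realized subgroup, never induce an arbitrary automorphism $\phi_i$ --- in particular not a hyperbolic automorphism of a $\mathbb Z^2$ subgroup, the example you raise. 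Hoping that ``the natural symmetries of Gompf's periodic-end construction'' furnish these automorphisms is unsupported: the normalizer in $\mathcal D(R)$ of the realized subgroup is not understood, and even in the compact setting the paper's Theorem \ref{thm:main} never realizes extension data directly; it only ever realizes wreath products, in which the top group acts by permuting copies. Your induction also lacks the bookkeeping needed to iterate: after one stage you have a subgroup of a diffeotopy group, not a structure to which the trick can be applied again, whereas iteration requires carrying along a block with the (P1)/(P2) properties.

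The paper's proof avoids realizing extensions altogether by pushing all the extension-theoretic difficulty into group theory. By the Krasner--Kaloujnine embedding theorem (Theorem \ref{kk_thm}, in the form of Corollary \ref{kk_cor_pc}), any poly-cyclic $G$ embeds into an iterated wreath product $\mathbb Z_{p_1}\wr\dots\wr\mathbb Z_{p_r}$ with each $p_i$ prime or $0$ (where $\mathbb Z_0:=\mathbb Z$); arbitrary conjugation actions and power relations are absorbed into the permutation action of the wreath product, which \emph{is} geometrically realizable. The geometric side is then handled by open blocks: Lemma \ref{open_zn_case} puts each $\mathbb Z_{p_i}$ in $\gsw(2)$ (the $\mathbb Z$ case via an explicit shift-type diffeomorphism of $\mathbb R^4$, the finite case via the compact Lemma \ref{zn_case} and \cite[Lemma 3.1]{auckly2017}), Lemma \ref{open_wreath_lemma} closes $\gsw(1)$ under wreathing by $\gsw(2)$ groups using end sums, and a single application of Theorem \ref{gompf_main} (the analog of \cite[Theorem 4.4]{gompf2018}) converts the resulting (P1) homomorphism into an embedding into $\mathcal D(R)$ for one fixed exotic $R$, yielding Theorem \ref{thm:app:wr} and hence Theorem \ref{thm:app:pc}. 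Note also that exoticness is not ``preserved stage by stage'' as in your plan; it enters once, at the end, through Gompf's theorem. To repair your argument you would need to replace the extension-by-extension induction with the wreath product reduction, at which point it becomes the paper's proof.
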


\noindent For the definition of a poly-cyclic group, see Definition \ref{def_of_pc}.
We remark that all finite solvable groups are poly-cyclic by the definition.

This paper consists of 5 sections.
In Section \ref{sec:pre}, we first see basic facts about a wreath product of groups.
It plays a key role in our proof of Theorem \ref{thm:main}.
We then check the precise definitions of corks and state previous results needed later.
We prove Theorem \ref{thm:main} \ref{thm:main:eqv} and \ref{thm:main:eqvgen} in Section \ref{sec:cork}
and Theorem \ref{thm:main} \ref{thm:main:weak} and \ref{thm:main:weakStein} in Section \ref{sec:weak_corks}.
In both sections, we first show them in the case of wreath products of certain groups
and see that it is in fact equivalent to the general one.
In Section \ref{sec:diff}, we prove Theorem \ref{thm:app:pc}.

\section{Preliminaries}
\label{sec:pre}
In this section, we first review the definition of a wreath product of groups and its properties.
We then define some terminologies about corks and state some previous results needed later.

Throughout this paper, all manifolds are smooth.
A closed (resp. compact) manifold is a manifold
which is compact and without a boundary (resp. with a nonempty boundary).
A diffeomorphism is orientation preserving unless otherwise stated.
A diffeotopy is a smooth ambient isotopy.
For a manifold $M$, $\Diff(M)$ denotes the diffeomorphism group of $M$
and $\mathcal D(M)$ denotes the diffeotopy group of $M$.
The symbol $1$ or $1_G$ denotes the unit in a group $G$.

\subsection{Wreath products}
A wreath product is a well-known operation in group theory.
For more details, see \cite{suzuki}.
This operation will be used repeatedly in our construction.

Let $H$ and $G$ be groups.
The symbol $H^G$ denotes the set of all maps from $G$ to $H$.
It has the natural group structure defined by the pointwise multiplication.
For any $g\in G$ and $F\in H^G$,
we define ${}^gF\in H^G$ by ${}^gF(x):=F(g^{-1}x)$ for all $x\in G$.
This determines a left action of $G$ on $H^G$.
\begin{defn}
  The wreath product $H\wr G$ of $H$ by $G$ is the semidirect product $H^G\rtimes G$ of $H^G$ by $G$ 
  with respect to the left action ${}^gF$.

\end{defn}
\noindent Note that the multiplication is defined by $(F_1,g_1)(F_2,g_2):=(F_1({}^{g_1}F_2),g_1g_2)$
for all $(F_1,g_1)$, $(F_2,g_2)$ $\in H\wr G$.
The group $H\wr G$ is an extension of $H^G$ by $G$
since $H^G\times\{1\}$ is a normal subgroup with $H\wr G/H^G\times\{1\}\cong G$.
If $G'\subset G$ and $H'\subset H$ are subgroups,
$H'\wr G'$ can be embedded into $H\wr G$.
\footnote{An embedding of a group means an injective homomorphism.}

The following theorem states an important property of a wreath product.
\begin{thm}[{\cite[Theorem 10.9]{suzuki}}]
  \label{kk_thm}
  Let $N\triangleleft G$ be a normal subgroup. The group $G$ can be embedded into the group $N\wr(G/N)$.
\end{thm}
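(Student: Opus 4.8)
The plan is to construct an explicit embedding $\phi\colon G\hookrightarrow N\wr(G/N)$ in the spirit of the Kaloujnine--Krasner embedding. Write $\pi\colon G\to G/N$ for the quotient homomorphism and fix once and for all a set-theoretic section $s\colon G/N\to G$ of $\pi$, normalized so that $\pi\circ s=\id_{G/N}$ and $s(1)=1_G$; such a transversal of coset representatives exists by the axiom of choice. For $g\in G$ I define a map $f_g\colon G/N\to N$ by $f_g(q)=s(q)^{-1}\,g\,s(\pi(g)^{-1}q)$ and then set $\phi(g):=(f_g,\pi(g))$.

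First I would check that $\phi$ actually lands in $N\wr(G/N)$, i.e. that $f_g(q)\in N$ for every $q\in G/N$. This is immediate, since $\pi(f_g(q))=q^{-1}\pi(g)(\pi(g)^{-1}q)=1$, so $f_g(q)\in\ker\pi=N$ and hence $f_g\in N^{G/N}$.

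The heart of the argument is verifying that $\phi$ is a homomorphism. Using the multiplication $(F_1,g_1)(F_2,g_2)=(F_1\,({}^{g_1}F_2),g_1g_2)$ with ${}^{q}F(x)=F(q^{-1}x)$, one computes $\phi(g_1)\phi(g_2)=\bigl(f_{g_1}\cdot{}^{\pi(g_1)}f_{g_2},\,\pi(g_1)\pi(g_2)\bigr)$, and the second coordinate already matches $\pi(g_1g_2)$ because $\pi$ is a homomorphism. For the first coordinate I must establish the cocycle identity $f_{g_1g_2}(q)=f_{g_1}(q)\,f_{g_2}(\pi(g_1)^{-1}q)$ for all $q$. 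Substituting the definitions, the right-hand side telescopes: the factor $s(\pi(g_1)^{-1}q)$ ending $f_{g_1}(q)$ cancels the factor $s(\pi(g_1)^{-1}q)^{-1}$ beginning $f_{g_2}(\pi(g_1)^{-1}q)$, leaving $s(q)^{-1}g_1g_2\,s(\pi(g_2)^{-1}\pi(g_1)^{-1}q)$, which is exactly $f_{g_1g_2}(q)$ since $\pi(g_1g_2)^{-1}=\pi(g_2)^{-1}\pi(g_1)^{-1}$.

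Finally I would prove injectivity. If $\phi(g)=(1,1)$ then $\pi(g)=1$, so $g\in N$ and $f_g$ is the constant map $1$; evaluating at $q=1$ and using $s(1)=1_G$ together with $\pi(g)^{-1}=1$ gives $1=f_g(1)=s(1)^{-1}g\,s(1)=g$, whence $g=1_G$. Thus $\phi$ is an injective homomorphism, that is, an embedding. The main obstacle is purely bookkeeping: one must keep the paper's left-action and semidirect-product conventions straight so that the cocycle identity comes out with its factors in the correct order; once the telescoping is arranged correctly, every remaining step is routine.
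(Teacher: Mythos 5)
Your proof is correct and is exactly the standard Kaloujnine--Krasner embedding $g\mapsto(f_g,\pi(g))$ with $f_g(q)=s(q)^{-1}g\,s(\pi(g)^{-1}q)$, which is the argument of the cited source (\cite[Theorem 10.9]{suzuki}); the paper itself gives no proof beyond that citation. I checked your cocycle identity and the telescoping cancellation against the paper's specific conventions ${}^{g}F(x)=F(g^{-1}x)$ and $(F_1,g_1)(F_2,g_2)=(F_1({}^{g_1}F_2),g_1g_2)$, and everything matches, including the injectivity step (where, as a small remark, $f_g(1)=s(1)^{-1}g\,s(1)=1$ forces $g=1$ even without the normalization $s(1)=1_G$).
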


\noindent This implies the following corollary.
For the sake of simplicity, we omit parentheses in an iterated wreath product,
i.e., we write simply
\[
H_r\wr H_{r-1}\wr\dots\wr H_1=\left(\cdots\left(H_r\wr H_{r-1}\right)\wr H_{r-2}\cdots\right)\wr H_1,
\]
although a wreath product is not associative.
\begin{cor}[{\cite[Corollary of Theorem 10.9]{suzuki}}]
  \label{kk_cor}
  Let $G=G_0\triangleright G_1\triangleright\dots\triangleright G_r={1}$ be a subnormal series and
  let $H_i:=G_{i-1}/G_i$ be its quotient group.
  The group G can be embedded into the iterated wreath product
  $H_r\wr H_{r-1}\wr\dots\wr H_1$.
\end{cor}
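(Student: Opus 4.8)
The plan is to prove the corollary by applying Theorem \ref{kk_thm} repeatedly along the subnormal series, peeling off one quotient at a time and iterating the wreath-product embedding. The guiding observation is that Theorem \ref{kk_thm} gives, for a single normal subgroup $N \triangleleft G$, an embedding $G \hookrightarrow N \wr (G/N)$. Here we have a whole chain $G = G_0 \triangleright G_1 \triangleright \dots \triangleright G_r = 1$, so I would run an induction on the length $r$ of the series.

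First I would set up the induction. For the base case $r = 1$ we have $G_0 = G \triangleright G_1 = 1$, so the series is trivial and $H_1 = G/1 \cong G$ embeds in itself; the claimed target $H_1$ is just $G$, so there is nothing to prove. For the inductive step, I would apply Theorem \ref{kk_thm} to the normal subgroup $G_1 \triangleleft G_0 = G$, obtaining an embedding
\[
G \hookrightarrow G_1 \wr (G/G_1) = G_1 \wr H_1.
\]
Now $G_1$ carries its own subnormal series $G_1 \triangleright G_2 \triangleright \dots \triangleright G_r = 1$ of length $r-1$, with the \emph{same} successive quotients $H_i = G_{i-1}/G_i$ for $i = 2, \dots, r$. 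By the inductive hypothesis applied to $G_1$, there is an embedding
\[
G_1 \hookrightarrow H_r \wr H_{r-1} \wr \dots \wr H_2.
\]

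The key step is then to promote an embedding of the base group into an embedding of the whole wreath product. I would use the functoriality remark already recorded in the excerpt: if $H' \subset H$ and $G' \subset G$ are subgroups, then $H' \wr G'$ embeds into $H \wr G$. Taking $H' = G_1$, $H = H_r \wr \dots \wr H_2$ (via the inductive embedding), and $G' = G = H_1$, this upgrades $G_1 \hookrightarrow H_r \wr \dots \wr H_2$ to an embedding
\[
G_1 \wr H_1 \hookrightarrow \bigl(H_r \wr \dots \wr H_2\bigr) \wr H_1 = H_r \wr H_{r-1} \wr \dots \wr H_1,
\]
using the stated left-associative convention for iterated wreath products. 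Composing this with the embedding $G \hookrightarrow G_1 \wr H_1$ from Theorem \ref{kk_thm} yields the desired embedding of $G$, completing the induction.

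The main obstacle I anticipate is not conceptual but bookkeeping: one must verify that an injective homomorphism $\varphi\colon G_1 \to K$ really does induce an injective homomorphism $G_1 \wr H_1 \to K \wr H_1$ acting as $\varphi$ on the base group and as the identity on the top group. This requires checking that the map on the direct-power piece $G_1^{H_1} \to K^{H_1}$ given by post-composition with $\varphi$ intertwines the two $H_1$-actions $^{g}F(x) = F(g^{-1}x)$ — which it does, since $\varphi$ acts pointwise and the shift is in the argument — and that injectivity on both factors of the semidirect product yields injectivity overall. This is exactly the content of the functoriality footnote in the excerpt, so I would cite it rather than reprove it; the only genuine care needed is aligning the indexing so that the successive quotients of the truncated series for $G_1$ match $H_2, \dots, H_r$ and that the convention for omitting parentheses is applied consistently.
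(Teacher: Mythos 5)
Your proof is correct: the paper gives no argument of its own here, citing \cite[Corollary of Theorem 10.9]{suzuki}, and the standard proof in that source is precisely your induction on the length $r$ of the series, combining Theorem \ref{kk_thm} with the subgroup-functoriality of the wreath product (which you apply with the top group $H_1$ fixed, so only post-composition by the injection $G_1\hookrightarrow H_r\wr\dots\wr H_2$ on the base is needed, and your verification that this intertwines the shift actions is exactly right). Your handling of the indexing of the truncated series and of the left-associative convention also checks out, so nothing further is required.
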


\noindent In particular, we have the following.
\begin{cor}[{\cite[the paragraph below Corollary of Theorem 10.9]{suzuki}}]
  \label{kk_cor_fs}
  For any finite solvable group $G$,
  there exist prime numbers $p_1,\dots,p_r$ such that
  $G$ can be embedded into $\mathbb Z_{p_1}\wr\mathbb Z_{p_2}\wr\dots\wr\mathbb Z_{p_r}$.
\end{cor}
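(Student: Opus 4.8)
The plan is to derive Corollary \ref{kk_cor_fs} from Corollary \ref{kk_cor} by supplying a suitable subnormal series for a finite solvable group $G$ whose successive quotients are all cyclic of prime order. Recall that a finite group $G$ is solvable precisely when it admits a subnormal series $G=G_0\triangleright G_1\triangleright\dots\triangleright G_r=\{1\}$ with each quotient $G_{i-1}/G_i$ abelian. So first I would invoke this standard characterization to obtain a subnormal series with abelian quotients.

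The key step is then to refine this series so that each quotient becomes cyclic of prime order. Given a step $G_{i-1}\triangleright G_i$ with $G_{i-1}/G_i$ a nontrivial finite abelian group $A$, I would choose a prime $p$ dividing $|A|$ and an element of order $p$ in $A$, generating a subgroup $\langle a\rangle\cong\mathbb Z_p$. Since $A$ is abelian, $\langle a\rangle$ is normal in $A$, and its preimage $N$ under the quotient map $G_{i-1}\to A$ satisfies $G_i\triangleleft N\triangleleft G_{i-1}$ with $G_{i-1}/N\cong A/\langle a\rangle$ and $N/G_i\cong\mathbb Z_p$. Inserting $N$ into the series and iterating this insertion finitely many times (the total order $|G|$ is finite, so the process terminates) yields a refined subnormal series $G=G_0'\triangleright G_1'\triangleright\dots\triangleright G_s'=\{1\}$ in which every quotient $H_j:=G_{j-1}'/G_j'$ is cyclic of prime order $p_j$.

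Finally I would apply Corollary \ref{kk_cor} to this refined series. That corollary embeds $G$ into the iterated wreath product $H_s\wr H_{s-1}\wr\dots\wr H_1$, and since each $H_j\cong\mathbb Z_{p_j}$, this is exactly $\mathbb Z_{p_s}\wr\dots\wr\mathbb Z_{p_1}$, giving the desired embedding (after relabelling the primes). I expect the only substantive point to be the refinement argument, and even that is routine: the existence of an order-$p$ element in a nontrivial finite abelian group is immediate from Cauchy's theorem (or from the structure theorem), and normality comes for free from abelianness. The wreath-product machinery itself is entirely supplied by Corollary \ref{kk_cor}, so there is no genuine obstacle here—the statement is essentially a specialization of the preceding corollary once one recalls that solvability is equivalent to possessing an abelian subnormal series, which can always be refined to one with prime cyclic factors.
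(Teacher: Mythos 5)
Your proof is correct and matches the paper's (implicit) argument: the paper presents this corollary as an immediate specialization of Corollary \ref{kk_cor}, obtained by applying it to a subnormal series of the finite solvable group whose successive quotients are cyclic of prime order, which is exactly what your refinement via Cauchy's theorem produces. The only cosmetic discrepancy, the reversed order of the factors relative to Corollary \ref{kk_cor}, is harmless since the primes are only existentially quantified, as you note.
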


\noindent Since $\mathbb Z_{n_1}\wr\mathbb Z_{n_2}\wr\dots\wr\mathbb Z_{n_r}$ is an extension of
$(\mathbb Z_{n_1}\wr\mathbb Z_{n_2}\wr\dots\wr\mathbb Z_{n_{r-1}})^{n_r}$ by $\mathbb Z_{n_r}$,
an inductive argument shows that $\mathbb Z_{n_1}\wr\mathbb Z_{n_2}\wr\dots\wr\mathbb Z_{n_r}$ is also solvable.

Lastly, we state the following slightly complicated proposition.
This will be used in subsequent sections.
\begin{prop}
  \label{homom_prop}
  Let $G,N$ be groups, let $H$ be a finite group, let $\phi\colon H\rightarrow G$ be a homomorphism,
  and let $\psi_h\colon N\rightarrow G$ be homomorphisms indexed by elements $h\in H$.
  We assume that for any $x\in N$ and $g,h\in H$, we have $\phi(h)\psi_g(x)\phi(h^{-1})=\psi_{hg}(x)$.
  Moreover, we assume that for any $x,y\in N$ and $g,h\in H$ with $g\neq h$,
  we have $\psi_g(x)\psi_h(y)=\psi_h(y)\psi_g(x)$.
  Then the map $\omega\colon N\wr H\rightarrow G$
  defined by $\omega(F,h):=\left(\prod_{g\in H}\psi_g(F(g))\right)\phi(h)$ is a homomorphism.
\end{prop}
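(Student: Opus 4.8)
The plan is to verify the homomorphism identity $\omega((F_1,h_1)(F_2,h_2)) = \omega(F_1,h_1)\,\omega(F_2,h_2)$ by direct computation, after first checking that the defining product is unambiguous. Since the factors $\psi_g(F(g))$ appearing in $\prod_{g\in H}\psi_g(F(g))$ carry pairwise distinct indices $g\in H$, the second hypothesis guarantees that any two of them commute; hence the product does not depend on the order in which the elements of $H$ are enumerated, and the expression $\omega(F,h)$ is well-defined. I will use this freedom to reorder factors repeatedly. I would then expand the left-hand side: recalling that $(F_1,h_1)(F_2,h_2) = (F_1\cdot{}^{h_1}F_2, h_1h_2)$ with $(F_1\cdot{}^{h_1}F_2)(g) = F_1(g)F_2(h_1^{-1}g)$, and using that each $\psi_g$ is a homomorphism together with $\phi(h_1h_2)=\phi(h_1)\phi(h_2)$, this gives
\[
\omega((F_1,h_1)(F_2,h_2)) = \left(\prod_{g\in H}\psi_g(F_1(g))\,\psi_g(F_2(h_1^{-1}g))\right)\phi(h_1)\phi(h_2).
\]

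For the right-hand side, the key step is to push $\phi(h_1)$ across the $F_2$-product. Since conjugation by $\phi(h_1)$ is an automorphism, it distributes over the product, so the first hypothesis yields
\[
\phi(h_1)\left(\prod_{g\in H}\psi_g(F_2(g))\right)\phi(h_1^{-1}) = \prod_{g\in H}\psi_{h_1g}(F_2(g)) = \prod_{g\in H}\psi_g(F_2(h_1^{-1}g)),
\]
where the last equality is the reindexing $g\mapsto h_1 g$, legitimate because the reordered factors still pairwise commute. Substituting this into $\omega(F_1,h_1)\,\omega(F_2,h_2)$ gives
\[
\omega(F_1,h_1)\,\omega(F_2,h_2) = \left(\prod_{g\in H}\psi_g(F_1(g))\right)\left(\prod_{g\in H}\psi_g(F_2(h_1^{-1}g))\right)\phi(h_1)\phi(h_2).
\]

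Comparing the two displays, the whole statement reduces to the identity
\[
\prod_{g\in H}\psi_g(F_1(g))\,\psi_g(F_2(h_1^{-1}g)) = \left(\prod_{g\in H}\psi_g(F_1(g))\right)\left(\prod_{g\in H}\psi_g(F_2(h_1^{-1}g))\right),
\]
that is, the interleaved product equals the separated one. This is where the second hypothesis does the real work, and it is the main (though modest) obstacle. Writing $a_g=\psi_g(F_1(g))$ and $b_g=\psi_g(F_2(h_1^{-1}g))$, any two factors with distinct indices commute, and the only potentially noncommuting pairs are $a_g$ and $b_g$ with the same index $g$, which occur in the same relative order on both sides. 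One can therefore slide each $b_g$ leftward past every $a_{g'}$ with $g'\neq g$ to collect all the $a$'s in front, proving the identity; a short induction on $|H|$ makes this precise. This establishes $\omega((F_1,h_1)(F_2,h_2)) = \omega(F_1,h_1)\,\omega(F_2,h_2)$, and since $\omega$ manifestly sends the identity $(1,1)$ to $1_G$, it follows that $\omega$ is a homomorphism.
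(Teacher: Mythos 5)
Your proof is correct and follows the same route as the paper, which only notes well-definedness (order-independence of the product via the commutation hypothesis) and then asserts that ``a straightforward argument shows the proposition.'' Your computation---conjugating the $F_2$-product by $\phi(h_1)$ using the first hypothesis, reindexing $g\mapsto h_1g$, and separating the interleaved product from the split one by sliding factors with distinct indices past each other---is precisely that straightforward verification, spelled out in full.
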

Note that $\omega$ is well-defined by the assumptions,
that is, the order of multiplication of $\prod_{g}\psi_g(F(g))$ does not affect the result.
An straightforward argument shows the proposition.

\subsection{Corks}
We first review terminologies related to corks.
We follow the conventions in \cite{auckly2017}.
\footnote{
  The definition of a cork in this section is slightly different from that in Section \ref{sec:intro}.
  In this section, the boundary diffeomorphism of a cork does not have to be an involution.
  In Section \ref{sec:intro}, we followed \cite{akbulut_yasui2009} except for the Stein condition
  for the sake of the explanation.}
Note that there are different ones
(cf. \cite{akbulut_yasui2008}).

Let $C$ be a compact contractible 4-manifold.
A \textit{cork} $(C,f)$ is the pair of $C$ and a diffeomorphism $f$ on $\partial C$.
Furthermore, let $G$ be a group and let $\phi$ be a $G$-action on $\partial C$.
We consider $\phi$ as a homomorphism $\phi\colon G\rightarrow\Diff(\partial C)$.
The pair $(C,\phi)$ is called a \textit{$G$-cork}
if $\phi$ maps any nontrivial element $g\in G\setminus\{1\}$ to a diffeomorphism on $\partial C$
which does not extend over $C$.
Similarly, let $\psi\colon G\rightarrow\mathcal D(\partial C)$ be a homomorphism.
The pair $(C,\psi)$ is called a \textit{weakly equivariant $G$-cork}
if a representative of $\psi(g)$ does not extend over $C$ for any nontrivial element $g\in G\setminus\{1\}$ as above.
This is well-defined.

Let $C$ be as above and
let $e\colon C\rightarrow X$ be an embedding into a closed 4-manifold $X$.
Assume that $(C,f)$ is a cork.
Cutting $e(C)$ in $X$ and regluing it by $f$,
we get the new manifold $X_f^e:=(X-\intr C)\cup_f C$.
This operation is called a \textit{cork twist}.
The manifold $X_f^e$ is always homeomorphic to $X$ by the following theorem.
\begin{thm}
  \label{freedman_homeo}
  Let $C$ be a compact contractible 4-manifold and let $f$ be a diffeomorphism on $\partial C$.
  Then $f$ extends over $C$ homeomorphically.
\end{thm}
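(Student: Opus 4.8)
The plan is to deduce the statement from Freedman's solution of the topological $4$-dimensional Poincar\'e conjecture via a doubling construction. Write $\Sigma:=\partial C$. First I would record that $\Sigma$ is an integral homology $3$-sphere: since $C$ is contractible, the long exact sequence of the pair $(C,\Sigma)$ together with Poincar\'e--Lefschetz duality $H_k(C,\Sigma)\cong H^{4-k}(C)$ forces $\Sigma$ to be connected with $H_*(\Sigma)\cong H_*(S^3)$.

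Next I would form the closed oriented $4$-manifold $W:=C\cup_f\overline{C}$, obtained by gluing $C$ to an orientation-reversed copy $\overline{C}$ along their common boundary via $f$. A Mayer--Vietoris computation shows $H_*(W)\cong H_*(S^4)$, and Seifert--van Kampen shows $W$ is simply connected (both halves are simply connected and $\Sigma$ is connected); hence $W$ is a homotopy $4$-sphere. By Freedman's theorem, $W$ is homeomorphic to $S^4$. The same argument with $f$ replaced by $\id_{\Sigma}$ shows that the untwisted double $D:=C\cup_{\id}\overline{C}$ is also homeomorphic to $S^4$.

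The crux is then to compare these two descriptions. The existence of a homeomorphism $C\to C$ extending $f$ is equivalent to the existence of a homeomorphism $\Phi\colon D\to W$ that is the identity on the copy of $C$: unwinding the two gluings, such a $\Phi$ restricts on $\overline{C}$ to a homeomorphism inducing $f$ on $\Sigma$, i.e.\ an extension of $f$. Equivalently, one wants uniqueness rel boundary of the contractible topological filling of $\Sigma$: the manifold $C$ with boundary marked by $\id_{\Sigma}$ and the same manifold with boundary marked by $f$ are two contractible topological fillings of $\Sigma$, and a homeomorphism between them that is the identity on $\Sigma$ (with respect to the markings) unwinds to a self-homeomorphism of $C$ restricting to $f$ on $\partial C$.

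I expect the main obstacle to be precisely this comparison step. The bare homeomorphism $W\cong S^4$ is not enough; one must upgrade it to a statement rel boundary, which rests on the full strength of Freedman's topological machinery (the disk embedding theorem and the resulting topological $h$-cobordism theorem in the simply connected setting) rather than on the Poincar\'e conjecture alone. Once uniqueness rel boundary is in hand, unwinding the two markings yields the desired homeomorphic extension of $f$ over $C$, completing the proof.
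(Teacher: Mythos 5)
Your skeleton is the right one, and it is essentially the argument the paper itself points to: the paper does not prove Theorem \ref{freedman_homeo} in-house but refers to a short Freedman-theoretic proof in \cite[Remarks (a)]{gompf2017_inf_cork_hdl}. Your preliminary reductions are all correct: $\Sigma=\partial C$ is a homology $3$-sphere by Poincar\'e--Lefschetz duality, the twisted double $W=C\cup_f\overline C$ is a homotopy $4$-sphere and hence homeomorphic to $S^4$ by \cite{freedman1982}, and you are right to insist that the bare homeomorphism $W\cong S^4$ proves nothing---the actual content is uniqueness rel boundary of the contractible topological filling of $\Sigma$, from which the extension of $f$ follows by exactly the unwinding you describe.

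The one concrete flaw is in your final paragraph, where you attribute the rel-boundary uniqueness to ``the topological $h$-cobordism theorem in the simply connected setting.'' That tool does not apply here, and an attempt to run it would fail: $\pi_1(\Sigma)$ is in general nontrivial (this is the generic case for corks), so the natural cobordisms one would feed into the $h$-cobordism theorem---for instance $C$ minus an open ball, viewed as a cobordism from $S^3$ to $\Sigma$, or the region between the two fillings inside the double---are only \emph{homology} $h$-cobordisms; the inclusion of the $\Sigma$-end is not a homotopy equivalence, so these are not $h$-cobordisms at all. The statement you actually need is Freedman's theorem that a homology $3$-sphere bounds a compact contractible topological $4$-manifold \emph{uniquely up to homeomorphism rel boundary}, whose proof in \cite{freedman1982} goes through his proper, end-theoretic techniques (handling the non-simply-connected end $\Sigma\times\mathbb R$) rather than the compact simply connected $h$-cobordism theorem. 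With that citation substituted at the crux, your argument closes up and coincides with the proof the paper cites; as written, the justification of the decisive step names the wrong theorem.
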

\noindent A short proof of this theorem using Freedman's results (\cite{freedman1982})
can be found in \cite[Remarks (a)]{gompf2017_inf_cork_hdl}.
If $X_f^e$ is not diffeomorphic to $X$, $e$ is called \textit{effective}.
Next, assume that $(C,\phi)$ is a $G$-cork.
Identifying $G$ and $\phi(G)$, we write simply $X_g^e$ instead of $X_{\phi(g)}^e$.
(Note that $\phi$ is always injective by the definition.)
If the manifolds $X_g^e$ for $g\in G$ are pairwise nondiffeomorphic, $e$ is called \textit{$G$-effective}.
Lastly, assume that $(C,\psi)$ is a weakly equivariant $G$-cork.
For any $g\in G$, choose a representative $f$ of $\psi(g)$.
The diffeomorphism type of $X_f^e$ does not depend on the choice of $f$.
It is denoted by $X_g^e$.
A \textit{$G$-effective embedding} is defined in the same way.

Next, we define a \textit{generalized $G$-cork}
\footnote{We can also define a \textit{generalized cork} and a \textit{generalized weakly equivariant $G$-cork}
but we do not use them.}.
This term was used by Tange in \cite{tange2015}.
Let $(C,\phi)$ be a $G$-cork.
If we allow $C$ to be non-contractible
and assume that $C$ is connected
\footnote{Tange does not assume $C$ to be connected explicitly in \cite{tange2015}.}
and that all the diffeomorphisms $\phi(g)$ on $\partial C$ for $g\in G$ extend over $C$ homeomorphically,
we call $(C,\phi)$ a \textit{generalized $G$-cork}.
A \textit{$G$-effective embedding} of a generalized $G$-cork is defined in the same way.

We will use the following results about disjoint embeddings of a cork and a $\mathbb Z$-cork.
Hereafter, for integers $r,s$, and $m$ with $r,s>0$ and $m\neq0$,
$C(r,s;m)$ denotes Gompf's $\mathbb Z$-cork constructed in \cite{gompf2017}
and $f$ denotes the generator of the $\mathbb Z$-action on $\partial C(r,s;m)$.

\begin{thm}[\AKMR{} {\cite[Lemma 2.2]{auckly2017}}]\label{emb_of_2_cork}
  For any $n\ge 1$, there exist a closed 4-manifold $X$, a cork $(\mathbb S,\sigma)$,
  and $n$ disjoint embeddings $s_i:\mathbb S\rightarrow X$ for $1\le i\le n$ such that
  $X_\sigma^{s_i}$ and $X_\sigma^{s_j}$ are nondiffeomorphic for any $i,j$ with $i\neq j$.
  Moreover, we can take $\mathbb S$ to be diffeomorphic to the boundary sum of copies of Akbulut's cork
  and thus Stein.
\end{thm}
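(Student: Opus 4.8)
The plan is to assemble $X$ out of gauge-theoretically distinguishable pieces and to detect which copy of the cork has been twisted by a Seiberg-Witten computation. First I would fix the building block: take Akbulut's cork $(A,\tau)$, which is a Stein domain, together with a closed simply connected $4$-manifold $Z$ containing a square-zero torus away from $A$ for which a single cork twist is effective and visible to the Seiberg-Witten invariant (such effective embeddings are classical, going back to Akbulut and Akbulut-Yasui). I would then set $\mathbb S$ to be the boundary sum $A\natural\cdots\natural A$ of enough copies of $A$, with $\sigma$ the corresponding boundary sum of copies of $\tau$. Then $\mathbb S$ is contractible, since a boundary sum of contractible manifolds is contractible, and $\sigma$ does not extend over $\mathbb S$ because an extension would restrict to an extension of $\tau$ over one summand; moreover $\mathbb S$ is Stein, as a boundary sum of Stein domains is Stein by Eliashberg's criterion. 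This already delivers the last sentence of the statement.

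The heart of the argument is to build a single closed $X$ carrying $n$ disjoint copies $s_1(\mathbb S),\dots,s_n(\mathbb S)$ so that the index of the twisted copy is recorded by an invariant. I would use a Fintushel-Stern knot-surgery construction along $n$ disjoint square-zero tori $T_1,\dots,T_n$, using knots $K_1,\dots,K_n$ whose symmetrized Alexander polynomials $\Delta_{K_1},\dots,\Delta_{K_n}$ are pairwise distinct (say with distinct irreducible factors), and I would localize the $i$-th surgery inside the $i$-th copy of $\mathbb S$ so that the twist $\sigma$ along $s_i$ cancels exactly the $i$-th surgery factor. By multiplicativity of Seiberg-Witten invariants under knot surgery, the untwisted $X$ carries the full product $\prod_{j=1}^{n}\Delta_{K_j}$, and twisting the $i$-th copy yields $\mathrm{SW}(X_\sigma^{s_i})=\mathrm{SW}(Z_0)\cdot\prod_{j\neq i}\Delta_{K_j}$, where $Z_0$ is the unsurgered base. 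Since the Laurent polynomial ring is a unique factorization domain and the $\Delta_{K_j}$ are pairwise distinct, the $n$ products $\prod_{j\neq i}\Delta_{K_j}$ are pairwise distinct, so the manifolds $X_\sigma^{s_i}$ are pairwise nondiffeomorphic.

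The hard part will be the localization step: one must realize the knot-surgery difference as a cork twist \emph{supported inside} a disjoint copy of the fixed cork $\mathbb S$, so that twisting $s_i$ alters precisely the $i$-th factor and leaves the others untouched. This is exactly where the boundary-sum structure of $\mathbb S$ is used, so that each twist region is large enough to contain the relevant surgery torus neighborhood, and it is where I must check that the $n$ embeddings can be made simultaneously disjoint and that their twists act independently on the Seiberg-Witten invariant. Once the invariant is shown to record the index $i$, the remaining claims are formal, and the proof is complete.
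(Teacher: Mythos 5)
The paper does not prove this statement at all: it is imported verbatim from \AKMR{} \cite[Lemma 2.2]{auckly2017}, so the relevant comparison is with their proof. Your blueprint is of the same flavor as theirs---boundary sums of copies of Akbulut's cork to get a Stein $\mathbb S$, Fintushel--Stern knot surgery along $n$ disjoint square-zero tori with knots having pairwise distinct Alexander polynomials, and multiplicativity of Seiberg--Witten invariants to tell the $n$ twists apart. But your proposal has a genuine gap, and you name it yourself: the ``localization step'' is the entire mathematical content of the lemma, and you only assert it. That twisting the \emph{fixed} cork $(\mathbb S,\sigma)$ along $s_i$ cancels exactly the $i$-th knot-surgery factor is not supplied by anything you cite. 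The classical effective embeddings of Akbulut's cork give a single twist that changes a smooth structure; they do not give a twist whose effect equals a \emph{prescribed} knot-surgery modification, supported inside a prescribed contractible submanifold disjoint from the remaining $n-1$ surgery tori, with the same boundary diffeomorphism $\sigma$ working for every index $i$ simultaneously. Nor can you fall back on the Curtis--Freedman--Hsiang--Stong/Matveyev cork theorem: it produces \emph{some} cork depending on the pair of homeomorphic manifolds, with no control that it is a fixed boundary sum of Akbulut corks, no disjointness from the other tori, and no compatibility across the $n$ indices. Producing exactly these embeddings---via explicit handle-calculus identifications of cork twists with knot-surgery changes, going back to Akbulut and Akbulut--Yasui---is what \AKMR{} actually do, so as written your argument presupposes the theorem it is meant to prove.

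A smaller but real error: your justification that $\sigma$ does not extend over $\mathbb S$ (``an extension would restrict to an extension of $\tau$ over one summand'') is invalid, since a diffeomorphism of $\mathbb S$ extending $\sigma$ need not preserve the boundary-sum decomposition, so there is nothing to restrict. In the conventions of this paper (Section 2, following \cite{auckly2017}) this claim is not even needed---a cork there is merely a compact contractible manifold together with a boundary diffeomorphism, with effectiveness a separate property of an embedding---and to the extent you want nonextendability, it follows a posteriori: if $\sigma$ extended over $\mathbb S$, every $X^{s_i}_\sigma$ would be diffeomorphic to $X$, contradicting pairwise nondiffeomorphism once $n\ge2$. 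The purely formal parts of your write-up (contractibility of the boundary sum, Steinness of a boundary sum of Stein domains, and the unique-factorization argument distinguishing the products $\prod_{j\neq i}\Delta_{K_j}$) are fine.
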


\begin{thm}[Tange {\cite[Theorem 1 and its proof]{tange2016arxiv}}]
  \label{emb_of_z_cork}
  Fix integers $r,s,m$ with $r,s>0$ and $m\neq0$.
  For any $n\ge 1$, there exist a closed 4-manifold $X$ and
  $n$ disjoint embeddings $s_i:C(r,s;m)\rightarrow X$ for $1\le i\le n$ such that
  $X_{k_1,\dots,k_n}^{s_1,\dots,s_n}$ and $X_{l_1,\dots,l_n}^{s_1,\dots,s_n}$ are nondiffeomorphic
  for any $(k_1,\dots,k_n)$, $(l_1,\dots,l_n)\in\mathbb Z^n$ with $(k_1,\dots,k_n)\neq(l_1,\dots,l_n)$.
  Consequently, we obtain a $\mathbb Z^n$-cork by joining them with 1-handles.
\end{thm}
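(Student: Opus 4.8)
The plan is to bootstrap from Gompf's single-cork result and to separate the $n$ twist parameters by a homological independence argument. Gompf's construction of $C(r,s;m)$ in \cite{gompf2017} comes with an embedding into a closed $4$-manifold $Z$ for which the $k$-fold boundary twist $Z_k$ is distinguished by its Seiberg--Witten invariant. Organizing this invariant as an element of the group ring $\mathbb{Z}[H_2(\,\cdot\,;\mathbb{Z})]$, one records that twisting $k$ times multiplies the Seiberg--Witten series by $P^{\,k}$ for a fixed nontrivial Laurent polynomial $P$ in the variable $t_\tau=\exp(\tau)$ attached to a homology class $\tau$ of infinite order; in particular $k\mapsto\mathrm{SW}(Z_k)$ is injective. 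So the first step is simply to isolate from \cite{gompf2017} this local description of the twist and its multiplicative effect on the Seiberg--Witten series.

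Next I would build the ambient manifold $X$ from $n$ independent copies of this local model, in the spirit of the disjoint-embedding construction of \AKMR{} (Theorem \ref{emb_of_2_cork}). The goal is a closed $X$ containing $n$ pairwise disjoint embeddings $s_i\colon C(r,s;m)\to X$, with the $i$-th twist detected by a class $\tau_i$, arranged so that $\tau_1,\dots,\tau_n$ are linearly independent in $H_2(X;\mathbb{Z})$. This can be achieved by an iterated fiber sum along tori, which both keeps the Seiberg--Witten invariant computable and places the $n$ copies of Gompf's configuration homologically independently. Because each twist is local and the pieces are disjoint, the simultaneous multi-twist $X^{s_1,\dots,s_n}_{k_1,\dots,k_n}$ acts on each slot independently.

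The decisive step is the gluing formula for Seiberg--Witten invariants: I would invoke the fiber-sum (product) formula to show that the series of $X^{s_1,\dots,s_n}_{k_1,\dots,k_n}$ equals that of $X$ multiplied by $\prod_{i=1}^n P_i^{\,k_i}$, where $P_i$ is a nontrivial Laurent polynomial in $t_{\tau_i}=\exp(\tau_i)$. Since $\tau_1,\dots,\tau_n$ are linearly independent, distinct tuples $(k_1,\dots,k_n)$ produce distinct series, so the assignment $(k_1,\dots,k_n)\mapsto\mathrm{SW}(X^{s_1,\dots,s_n}_{k_1,\dots,k_n})$ is injective; the manifolds are pairwise nondiffeomorphic, while Theorem \ref{freedman_homeo} makes them pairwise homeomorphic. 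For the final sentence, joining the $n$ disjoint contractible pieces $s_i(C(r,s;m))$ by $1$-handles yields a connected contractible $4$-manifold $C$ whose boundary carries the $\mathbb{Z}^n$-action generated by the $n$ commuting copies of $f$ (each acting on its own summand and trivially elsewhere); twisting $C$ by a tuple reproduces the multi-twist above, so injectivity shows that no nontrivial element of $\mathbb{Z}^n$ extends over $C$, giving the claimed $\mathbb{Z}^n$-cork.

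The main obstacle is the \emph{separation} of the parameters rather than merely detecting a single twist: one must guarantee that the $n$ twists feed into linearly independent slots of the invariant, so that the full tuple --- not just a sum like $k_1+\dots+k_n$ --- can be recovered. This is precisely where the independence of the classes $\tau_i$ and the multiplicativity of the Seiberg--Witten series under the chosen fiber sum have to be verified with care; the remaining work is the routine bookkeeping of assembling disjoint local models and applying the gluing formula.
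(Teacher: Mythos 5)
First, a point of comparison: the paper does not prove this statement at all --- it is quoted from Tange \cite{tange2016arxiv} (which in turn builds on Gompf \cite{gompf2017}), so your proposal can only be measured against that cited argument. Measured so, your blueprint follows essentially the same route as the actual proof: Gompf realizes the boundary twist $f^k$ locally as a $k$-fold torus twist inside a closed manifold; $n$ disjoint copies of this local model are placed (via fiber sums of elliptic-surface pieces) so that the detecting homology classes $\tau_1,\dots,\tau_n$ are independent; the Seiberg--Witten series, computed by the gluing/knot-surgery formula, then separates the full tuple $(k_1,\dots,k_n)$ and not merely $k_1+\dots+k_n$; and the $\mathbb Z^n$-cork is obtained by joining the pieces with $1$-handles exactly as you describe. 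Your emphasis on the separation problem --- independence of the $\tau_i$ plus multiplicativity under the fiber sum --- is precisely the crux of Tange's extension of Gompf's single-cork result.

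One substantive correction, though: your claim that the $k$-fold twist multiplies the Seiberg--Witten series by $P^k$ for a fixed Laurent polynomial $P$ is not what Gompf's model gives, and if you set out to verify that multiplicative structure you would fail. The $k$-fold twist along the relevant torus (a fishtail vanishing-cycle torus) amounts to Fintushel--Stern knot surgery by the $k$-twist knot; equivalently, by the Morgan--Mrowka--Sz\'ab\'o surgery formula the dependence on $k$ is affine. So the series is multiplied by a factor $\Delta_k(t_{\tau_i})$ whose coefficients are \emph{linear} in $k$ (the symmetrized Alexander polynomial of a $k$-twist knot, of the shape $kt-(2k\pm1)+kt^{-1}$ up to convention), not by a $k$-th power. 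Your downstream argument survives this correction: since the ambient manifold has nonvanishing Seiberg--Witten invariant and the $\tau_i$ are independent, the coefficients of $\prod_i\Delta_{k_i}(t_{\tau_i})$ recover each $k_i$, so injectivity on tuples holds. A last minor point on the final step: to make the $\mathbb Z^n$-action on the boundary of the $1$-handle join well defined, the handles must be attached along balls fixed by $f$; Gompf's $f$ has fixed points, and indeed this paper attaches its $1$-handles at fixed points $x,y$ of $f$ in the proofs of Theorem \ref{thm:wreath:eqv} and Lemma \ref{weak_cork_lem}, after which each generator is supported in its own boundary summand and the twist of $C$ by a tuple reproduces the multi-twist, as you claim.
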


\noindent For the definition of Akbulut's cork, see \cite[Section 9.3]{gompf_stipsicz}.
Note that $X_{k_1,\dots,k_n}^{s_1,\dots,s_n}$ is the manifold obtained by twisting each $s_i(C(r,s;m))$ by $k_i$.
In Theorem \ref{emb_of_2_cork}, we can assume that $(\mathbb S,\sigma)$ is a $\mathbb Z_2$-cork
as they mentioned in \cite{auckly2017} but we do not need this fact.

\section{Constructing $G$-corks}
\label{sec:cork}
In this section, we prove the following theorem
equivalent to Theorem \ref{thm:main} \ref{thm:main:eqv} and \ref{thm:main:eqvgen}.

\begin{thm}
  \label{thm:wreath:eqv}
  Let $m\ge1$ be an integer.
  \begin{tightenum}
  \item\label{thm:wreath:eqv:eqv}
    For any finite subgroup $H\subset\SO(4)$, there exists a $\mathbb Z^m\wr H$-cork.
  \item\label{thm:wreath:eqv:eqvgen}
    For any finite group $H$, there exists a generalized $\mathbb Z^m\wr H$-cork.
  \end{tightenum}
\end{thm}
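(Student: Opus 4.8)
The plan is to prove both parts simultaneously by constructing a compact connected $4$-manifold $C$ from $|H|$ copies of Tange's $\mathbb{Z}^m$-cork, indexed by the elements of $H$, and by building on $\partial C$ an action of $\mathbb{Z}^m\wr H$ in which the normal subgroup $(\mathbb{Z}^m)^H$ twists the individual copies while the quotient $H$ permutes them by left translation. Since $\omega(F,h)$ cannot extend over $C$ once the twisted manifold $X^e_{(F,h)}$ is seen to be nondiffeomorphic to $X$, the entire statement reduces to exhibiting a single embedding $e\colon C\to X$ into a closed $4$-manifold for which the manifolds $X^e_{(F,h)}$ are pairwise nondiffeomorphic as $(F,h)$ ranges over $\mathbb{Z}^m\wr H$.

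I would assemble the abelian part first. Applying Theorem \ref{emb_of_z_cork} with $n=m|H|$ produces a closed $X$ together with $m|H|$ disjoint embeddings of $C(r,s;m)$; grouping these into $|H|$ blocks of $m$ and joining each block by $1$-handles yields $|H|$ disjoint copies $B_g$ of a $\mathbb{Z}^m$-cork, with Tange's conclusion recording the twisting vector $F\in(\mathbb{Z}^m)^H$ faithfully. For part \ref{thm:wreath:eqv:eqv}, where $H\subset\SO(4)$, I would then arrange the blocks in a single $H$-orbit by the \AKMR{} equivariant trick, so that left translation by $h$ is realized by a diffeomorphism $\phi(h)$ of $\partial C$ carrying $B_g$ onto $B_{hg}$, the equivariant $1$-handle joining keeping $C$ contractible. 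Writing $\psi_g\colon\mathbb{Z}^m\to\Diff(\partial C)$ for the twist supported on $B_g$, both hypotheses of Proposition \ref{homom_prop} hold—twists on distinct blocks commute by disjointness of support, and $\phi(h)\psi_g(x)\phi(h)^{-1}=\psi_{hg}(x)$ since $\phi(h)$ sends $B_g$ to $B_{hg}$—so the resulting $\omega\colon\mathbb{Z}^m\wr H\to\Diff(\partial C)$ is a homomorphism providing the desired action.

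It then remains to establish effectiveness. For $h=1$ the twist $X^e_{(F,1)}$ is a product of independent block twists and is distinguished by Tange's parameter $F$, while for $F=0$ the permutation $\phi(h)$ is made effective by the \AKMR{} mechanism. The crux is the mixed case, and this is the step I expect to be the main obstacle: I would track a gauge-theoretic invariant of $X^e_{(F,h)}$ that simultaneously reads off the permutation $h$ of the blocks—through the distinguishing invariants attached to the copies in the \AKMR{} configuration—and, block by block, the infinite-order twisting amounts $F(g)$ through Tange's detection, and then verify that distinct pairs $(F,h)$ give distinct values. The difficulty is that $\phi(h)$ reshuffles which copy carries which twist, so one must check that permutation-detection and per-block twist-detection can be extracted from one configuration without interfering, i.e.\ that the \AKMR{} equivariant placement is compatible with the independence of Tange's block parameters.

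For part \ref{thm:wreath:eqv:eqvgen}, with $H$ an arbitrary finite group, the only ingredient that breaks down is the contractible equivariant placement, since a general $H$ need not embed in $\SO(4)$. I would instead take $C$ to be a non-contractible connected filling of the same symmetric boundary $\partial C$, so that each $\phi(h)$ still permutes the boundary connected-sum factors and both $\omega$ and the effectiveness analysis of the previous paragraph persist verbatim; smooth non-extension of every nontrivial $\omega(F,h)$ is therefore unchanged. Homeomorphic extension over $C$—the extra condition in the definition of a generalized cork—then follows from Freedman's theory applied to the topologically standard factors. This produces a generalized $\mathbb{Z}^m\wr H$-cork and completes the argument.
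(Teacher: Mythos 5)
Your overall architecture (disjoint Tange blocks indexed by $H$, twists $\psi_g$ supported on the blocks, permutation action $\phi$, and Proposition \ref{homom_prop} assembling them into $\omega\colon\mathbb Z^m\wr H\rightarrow\Diff(\partial C)$) matches the paper, but there is a genuine gap exactly where you flag ``the main obstacle,'' and the missing idea is not a new gauge-theoretic invariant --- it is the \AKMR{} shrink-and-pretwist trick combined with \emph{doubling} the $\mathbb Z^m$-twists, neither of which appears in your construction. Two concrete problems result. First, as you have set things up, the pure permutations are not effective at all: $\phi(h)$ is defined as the restriction of a diffeomorphism of the whole embedded configuration $\overline{\mathbb T}$ (linear on the $4$-ball, extended over the corks and $1$-handles), so $\phi(h)$ extends smoothly over $\overline{\mathbb T}$ and the twist $X^e_{(0,h)}$ is diffeomorphic to $X$; your appeal to ``the \AKMR{} mechanism'' for the case $F=0$ invokes by name precisely the step you have omitted. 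The paper's fix is to shrink one copy $s(1_H,1)$, twist it once to form $\mathbb T:=\overline{\mathbb T}^{s'(1_H,1)}_1$ inside $Y:=X^{s'(1_H,1)}_1$, so that the single odd twist acts as a marker which regluing by $\omega(F,h)$ transports to position $(h,1)$. Second, even granting a marker, your untwisted bookkeeping would fail injectivity: with single (undoubled) twists one gets collisions $F+\delta_{(h,1)}=F'+\delta_{(h',1)}$ with $(F,h)\neq(F',h')$, since $F$ ranges over all of $\mathbb Z^{H\times\finset{m}}$. The paper defines $\psi_h(n)$ to act by $f^{2n_i}$, so that the identification $Y^\iota_{(F,h)}\cong X(2F+\delta_{(h,1)})$ holds, and the map $(F,h)\mapsto 2F+\delta_{(h,1)}$ is injective by parity: the unique odd coordinate recovers $h$, and then $F$ is determined. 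With this, Theorem \ref{emb_of_z_cork} alone distinguishes all the twisted manifolds; no simultaneous ``permutation-detection plus per-block detection'' invariant is needed, so the interference problem you anticipate never arises.

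For part \ref{thm:wreath:eqv:eqvgen} your sketch is directionally right but leaves the essential point unaddressed: you must realize left translation by $H$ as honest diffeomorphisms of a connected filling without the ambient $\SO(4)$ linearity, and ``a non-contractible connected filling of the same symmetric boundary'' does not specify how. The paper does this by joining the copies $s(h,1)(C)$ along a Cayley graph of $H$, with a doubled pair of $1$-handles $\mathfrak h(h,g)$, $\mathfrak h(hg,g^{-1})$ for each generator-type edge, so that $\phi(x)$ permutes corks and handles consistently ($\phi(x)(\mathfrak h(h,g))=\mathfrak h(xh,g)$); the resulting $\overline{\mathbb T}$ is connected but not contractible, which is precisely why the output is only a \emph{generalized} cork, with homeomorphic extension of every $\omega(F,h)$ supplied by Theorem \ref{freedman_homeo} applied to $f$ on each copy of $\partial C$. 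The same shrink-and-double mechanism then yields $Y^\iota_{(F,h)}\cong X(2F+\delta_{(h,1)})$ and effectiveness as before. In short: your proposal correctly assembles the group action but is missing the one trick (shrunk pretwisted copy plus doubled exponents) that makes every nontrivial $\omega(F,h)$ non-extendable, and in its absence the construction as written is not a cork.
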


\begin{prop}
  Theorem \ref{thm:wreath:eqv} \ref{thm:wreath:eqv:eqv} and \ref{thm:wreath:eqv:eqvgen}
  are equivalent to Theorem \ref{thm:main} \ref{thm:main:eqv} and \ref{thm:main:eqvgen} respectively.
\end{prop}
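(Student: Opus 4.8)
The plan is to establish the two biconditionals by proving each implication separately, relying on three ingredients: an elementary \emph{restriction principle} for corks, the identification of $\mathbb Z^m\wr H$ as an extension of the type governed by Theorem \ref{thm:main}, and the Kaloujnine--Krasner embedding of Theorem \ref{kk_thm}. The key lemma I would isolate first is the restriction principle: if $G'$ admits a $G'$-cork (resp.\ a generalized $G'$-cork) $(C,\phi)$ and $G\subset G'$ is a subgroup, then $(C,\phi|_G)$ is a $G$-cork (resp.\ a generalized $G$-cork). This is immediate from the definitions, since for a nontrivial $g\in G\subset G'$ the diffeomorphism $\phi(g)$ fails to extend over $C$ precisely because it fails to extend as an element of $G'$; in the generalized case every $\phi(g)$ with $g\in G$ still extends homeomorphically and $C$ is still connected. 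I expect this to be the only point where one must read the definition of a generalized cork with some care.

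For the direction Theorem \ref{thm:main} $\Rightarrow$ Theorem \ref{thm:wreath:eqv}, I would observe that $\mathbb Z^m\wr H=(\mathbb Z^m)^H\rtimes H$ with $(\mathbb Z^m)^H\cong\mathbb Z^{m|H|}$, so that $\mathbb Z^m\wr H$ is itself an extension of $\mathbb Z^{m|H|}$ by the same finite group $H$. Applying Theorem \ref{thm:main} \ref{thm:main:eqv} when $H\subset\SO(4)$, or \ref{thm:main:eqvgen} for an arbitrary finite $H$, to this extension directly produces a $\mathbb Z^m\wr H$-cork, respectively a generalized $\mathbb Z^m\wr H$-cork. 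This is exactly the content of Theorem \ref{thm:wreath:eqv} \ref{thm:wreath:eqv:eqv} and \ref{thm:wreath:eqv:eqvgen}, so no further work is needed here.

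For the converse Theorem \ref{thm:wreath:eqv} $\Rightarrow$ Theorem \ref{thm:main}, let $G$ be an extension of $\mathbb Z^m$ by $H$, so that $\mathbb Z^m\triangleleft G$ with $G/\mathbb Z^m\cong H$. When $m\ge1$, Theorem \ref{kk_thm} embeds $G$ into $\mathbb Z^m\wr H$, and combining the $\mathbb Z^m\wr H$-cork (resp.\ generalized cork) supplied by Theorem \ref{thm:wreath:eqv} with the restriction principle yields a $G$-cork (resp.\ generalized $G$-cork). The one edge case is $m=0$, where $G\cong H$ is finite and Theorem \ref{thm:wreath:eqv} does not apply directly; here I would instead embed $H$ into $\mathbb Z\wr H$ as the canonical copy of the quotient factor and invoke Theorem \ref{thm:wreath:eqv} with $m=1$, again restricting. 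The main thing to check is that this reduction is uniform across both parts: the $m=0$ hypotheses of Theorem \ref{thm:main}, namely $H\subset\SO(4)$ for \ref{thm:main:eqv} and $H$ finite for \ref{thm:main:eqvgen}, are precisely the hypotheses under which Theorem \ref{thm:wreath:eqv} produces the ambient cork, so the same argument handles \ref{thm:main:eqv} and \ref{thm:main:eqvgen} at once. The principal obstacle is therefore not any single deep step but rather the bookkeeping of the $m=0$ case and the verification that the restriction principle genuinely preserves the homeomorphic-extension and connectedness conditions defining a generalized cork.
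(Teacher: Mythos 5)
Your proposal is correct and follows essentially the same route as the paper, whose one-line proof invokes exactly the two ingredients you use: Theorem \ref{kk_thm} to embed an extension $G$ of $\mathbb Z^m$ by $H$ into $\mathbb Z^m\wr H$, and the observation that $\mathbb Z^m\wr H$ is itself an extension of $\mathbb Z^{m|H|}$ by $H$. Your write-up merely makes explicit what the paper leaves implicit, namely the restriction principle for ($G$-)corks and the $m=0$ edge case, both of which you handle correctly.
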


\begin{proof}
  Theorem \ref{kk_thm} and the fact that $\mathbb Z^m\wr H$ is an extension of $\mathbb Z^{m|H|}$ by $H$ imply the proposition.
\end{proof}

We prove \ref{thm:wreath:eqv:eqv} and \ref{thm:wreath:eqv:eqvgen} in the same way.
The main idea is substituting Gompf's $\mathbb Z$-cork
for a $\mathbb Z_2$-cork in \cite[the proof of Theorem A]{auckly2017}.
Recall that $C(r,s;m)$ denotes Gompf's cork and $f$ denotes the generator of the $\mathbb Z$-action
(see Section \ref{sec:pre}).
\begin{proof}[Proof of Theorem \ref{thm:wreath:eqv} \ref{thm:wreath:eqv:eqv}.]
  Fix $\bar r,\bar s,\bar m$ with $\bar r,\bar s>0$ and $\bar m\neq0$ and let $C:=C(\bar r,\bar s;\bar m)$.
  Let $X$ be a closed 4-manifold and $s{(h,i)}:C\rightarrow X$ be disjoint embeddings indexed by $(h,i)\in H\times\finset{m}$.
  For $F\in\mathbb Z^{H\times\finset{m}}$, we use $X(F)$ to denote the manifold
  obtained by twisting each cork $s(h,i)(C)$ by $F(h,i)\in\mathbb Z$.
  By Theorem \ref{emb_of_z_cork}, we can assume that two manifolds $X(F)$ and $X(G)$ are nondiffeomorphic if $F\neq G$.

  Next, we use the trick in \cite{auckly2017}.
  Let $\alpha:B^4\rightarrow X$ be an embedding of a 4-ball which is disjoint from $s{(h,i)}(C)$.
  Choose a point $p\in\partial B^4$ such that $h\cdot p\neq h'\cdot p$ for any $h,h'\in H$ with $h\neq h'$.
  Now $H$ acts on $B^4$ by linear transformations.
  Let $x,y\in\partial C$ be fixed points of $f$.
  We connect $\alpha(h\cdot p)$ with $s{(h,1)}(x)$, $s{(h,1)}(y)$ with $s{(h,2)}(x)$,
  ..., and $s{(h,m-1)}(y)$ with $s{(h,m)}(x)$ for all $h\in H$ by $1$-handles.
  The resulting manifold embedded in $X$ is denoted by $\overline{\mathbb T}$.
  See Figure \ref{fig:boundary_sum}.

  \begin{figure}
  \centering
  \includegraphics[width=9cm]{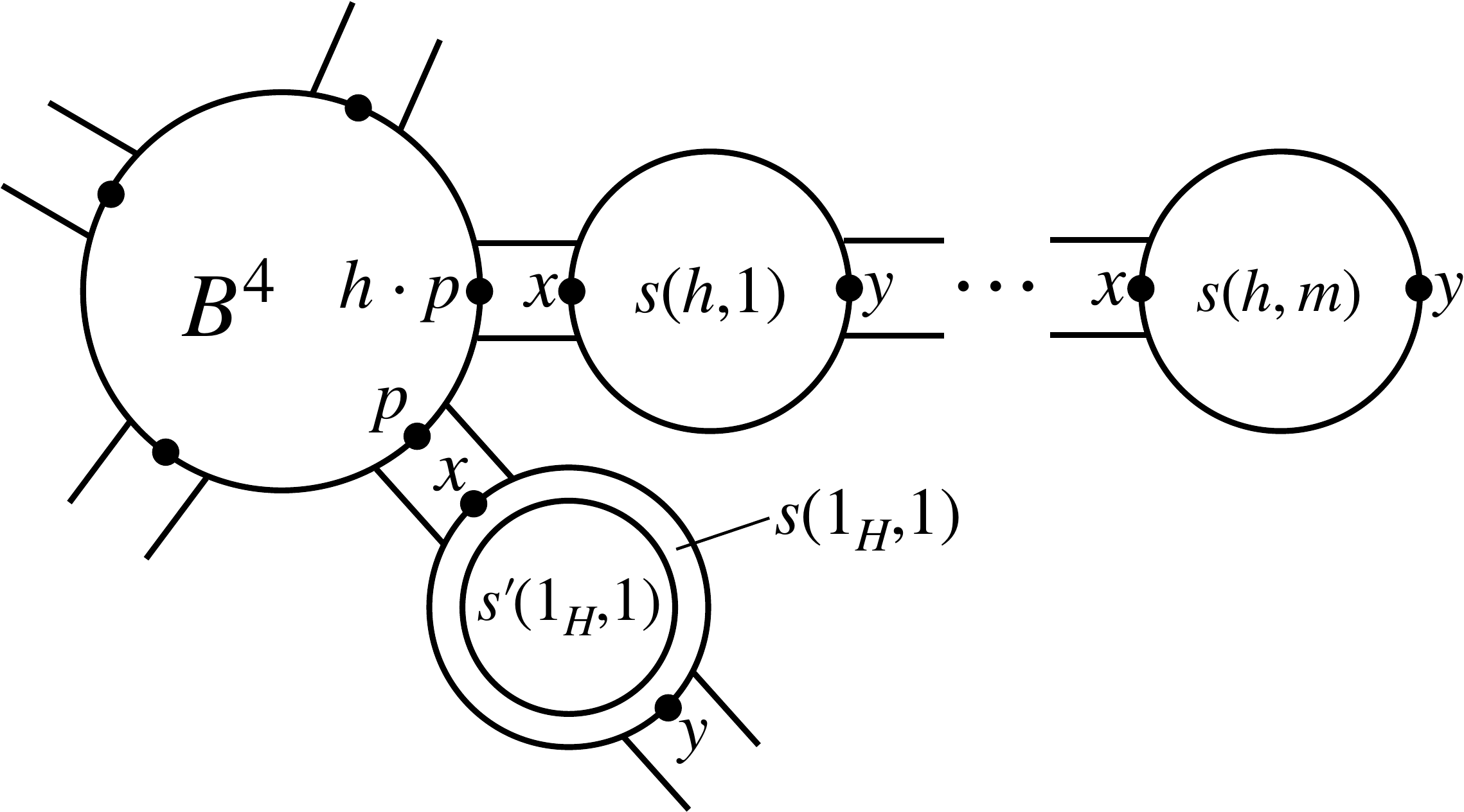}
  \caption{The manifold $\overline{\mathbb T}$.}
  \label{fig:boundary_sum}
  \end{figure}

  Let $s'(1_H,1):C\rightarrow X$ be an embedding obtained by shrinking $s{(1_H,1)}$.
  \footnote{This is also part of the trick in \cite{auckly2017}.}
  To be more precise, $s'(1_H,1)$ is the composition of $s{(1_H,1)}$
  and a shrinking map $C\rightarrow\intr C$ isotopic to the identity on $C$.
  We define $\mathbb T:=\overline{\mathbb T}^{s'(1_H,1)}_1$.
  It is embedded in $Y:=X^{s'(1_H,1)}_1\cong X(\delta_{(1_H,1)})$ naturally,
  where $\delta_{(h,j)}$ denotes the element in $\mathbb Z^{H\times\finset{m}}$
  whose value is 1 at $(h,j)$ and 0 otherwise.
  Let $\iota$ be the embedding of $\mathbb T$ into $Y$.

  We define actions on $\partial\mathbb T$.
  By its construction, $\overline{\mathbb T}$ has the $H$-action
  which is the linear action on $B^4$ 
  and extended on Gompf's corks and 1-handles trivially.
  Since we have $\partial\mathbb T=\partial\overline{\mathbb T}$,
  the restriction of the action to it defines a boundary action 
  $\phi:H\rightarrow\Diff(\partial\mathbb T)$ of $\mathbb T$.
  Next, we define $\mathbb Z^m$-actions $\psi_h$ on $\partial\mathbb T$ indexed by $h\in H$.
  For any $n=(n_1,\dots,n_m)\in\mathbb Z^m$,
  let $\psi_h(n)$ be the diffeomorphism which is $f^{2n_i}$ on the support of $f$
  \footnote{This denotes the closure of the set $\{x\in\partial C\mid f(x)\neq x\}$.}
  in the boundary of each Gompf's cork $s(h,i)(C)$ and
  the identity on the rest.
  Note that the exponent of the power is doubled $n_i$.
  The actions $\phi$ and $\psi_h$ for $h\in H$ satisfy the assumptions of Proposition \ref{homom_prop}
  and we get the homomorphism $\omega:\mathbb Z^m\wr H\rightarrow\Diff(\partial\mathbb T)$.

  Lastly, we twist $\mathbb T$ in $Y$.
  For any $(F,h)\in\mathbb Z^m\wr H$, we have $Y_{(F,h)}^\iota\cong X(2F+\delta_{(h,1)})$.
  Therefore, they are pairwise nondiffeomorphic and $\iota$ is $\mathbb Z^m\wr H$-effective.
\end{proof}

\begin{proof}[Proof of Theorem \ref{thm:wreath:eqv} \ref{thm:wreath:eqv:eqvgen}]
  Let $s(h,i)$ and $X$ be as above.
  We connect $s(h,i)(C)$ in the following way.
  We choose $z_h,z_h'\in\partial C$ indexed by $h\in H\setminus\{1\}$ and $x,y\in\partial C$
  in the fixed points of $f$.
  For each $h\in H$ and $g\in H\setminus\{1\}$, we attach a 1-handle $\mathfrak h(h,g)$
  to $s(h,1)(z_g)$ and $s(hg,1)(z_{g^{-1}}')$.
  Thus, $s(h,1)(C)$ and $s(hg,1)(C)$ are connected by two 1-handles $\mathfrak h(h,g)$ and $\mathfrak h(hg,g^{-1})$.
  Then we connect $s(h,i)(x)$ with $s(h,i+1)(y)$ by a 1-handle $\mathfrak h'(h,i)$ for any $i\in\finset{m-1}$.
  The resulting manifold $\overline{\mathbb T}$ is the Cayley graph of $H$
  whose vertices are boundary sums of copies of $C$ and edges are doubled 1-handles.
  Note that $\overline{\mathbb T}$ is connected.

  Let $s'(1_H,1)$ be a shrunk $s(1_H,1)$ and
  define $\mathbb T:=\overline{\mathbb T}^{s'(1_H,1)}_1$ and $Y:=X^{s'(1_H,1)}_1$.
  The manifold $\mathbb T$ is embedded in $Y$.
  Let $\phi$ be the $H$-action on $\overline{\mathbb T}$
  defined by $\phi(x)s(h,i)(p):=s(xh,i)(p)$ for any $x,h\in H$, $i\in\finset{m}$, and $p\in C$
  on each Gompf's cork
  and extended over 1-handles.
  Thus, we have $\phi(x)(\mathfrak h(h,g))=\mathfrak h(xh,g)$ and $\phi(x)(\mathfrak h'(h,i))=\mathfrak h'(xh,i)$,
  and these are the identities.
  We restrict it to $\partial\overline{\mathbb T}=\partial\mathbb T$.
  Let $\psi_h$ for $h\in H$ be the $\mathbb Z^m$-actions on $\partial\mathbb T$ same as the above proof.
  By Proposition \ref{homom_prop}, we get the homomorphism
  $\omega:\mathbb Z^m\wr H\rightarrow \Diff(\partial\mathbb T)$.

  For any $(F,h)\in\mathbb Z^m\wr H$,
  $\omega(F,h)$ extends homeomorphically over $C$
  since the diffeomorphism $f$ on $\partial C$ extends homeomorphically over $C$ by Theorem \ref{freedman_homeo}.
  We have $Y^\iota_{(F,h)}\cong X(2F+\delta_{(h,1)})$,
  where $\iota:\mathbb T\rightarrow Y$ is the inclusion map.
  It shows that $\iota$ is $\mathbb Z^m\wr H$-effective.
\end{proof}

\begin{que}
  Does there exists a simply connected generalized $H$-cork for any finite group $H$?
\end{que}

\section{Constructing weakly equivariant $G$-corks}
\label{sec:weak_corks}
In this section, we prove the following theorem
equivalent to Theorem \ref{thm:main} \ref{thm:main:weak} and \ref{thm:main:weakStein}.

  \label{wreath_weak_cork}

  \label{wreath_weak_stein_cork}

\begin{thm}
  \label{thm:wreath:weak}
  Let $m\ge1$ and $n_1,\dots,n_r\ge2$ be any integers.
  \begin{tightenum}
  \item\label{thm:wreath:weak:weak}
     There exists a weakly equivariant
     $\mathbb Z^m\wr\left(\mathbb Z_{n_1}\wr\mathbb Z_{n_2}\wr\dots\wr\mathbb Z_{n_r}\right)$-cork.
   \item\label{thm:wreath:weak:Stein}
     There exists a weakly equivariant Stein
     $\mathbb Z_{n_1}\wr\mathbb Z_{n_2}\wr\dots\wr\mathbb Z_{n_r}$-cork.
  \end{tightenum}
\end{thm}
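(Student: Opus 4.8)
The plan is to prove both parts by induction on the number $r$ of wreath factors, at each stage stripping off the outermost factor $\mathbb Z_{n_r}$ and imitating the construction in the proof of Theorem \ref{thm:wreath:eqv} \ref{thm:wreath:eqv:eqv}, but carried out inside the diffeotopy group $\mathcal D$ rather than $\Diff$; it is precisely this relaxation that lets the idea be iterated. Write $K_{r-1}:=\mathbb Z_{n_1}\wr\dots\wr\mathbb Z_{n_{r-1}}$, so that $K_r=K_{r-1}\wr\mathbb Z_{n_r}$. The induction hypothesis will produce a weakly equivariant cork $C_{r-1}$—a boundary sum of copies of the building block, which is Gompf's cork $C(\bar r,\bar s;\bar m)$ for \ref{thm:wreath:weak:weak} and Akbulut's cork $\mathbb S$ for \ref{thm:wreath:weak:Stein}—together with an action $\omega_{r-1}\colon K_{r-1}\to\mathcal D(\partial C_{r-1})$ permuting the copies and, crucially, enough disjoint effective embeddings to feed the next step.

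For the inductive step I would take $n_r$ disjoint copies of $C_{r-1}$, attach them by $1$-handles to the orbit of a point on the boundary of a $4$-ball equipped with the linear $\mathbb Z_{n_r}\subset\SO(4)$ action, and then apply the shrink-and-twist seed of \cite{auckly2017} to one copy, exactly as in Section \ref{sec:cork}, obtaining a cork $\mathbb T=:C_r$ embedded in a closed manifold $Y$. The rotation gives a genuine boundary action $\phi\colon\mathbb Z_{n_r}\to\mathcal D(\partial\mathbb T)$ permuting the copies, while $\omega_{r-1}$, transported onto the $h$-th copy, gives $\psi_h\colon K_{r-1}\to\mathcal D(\partial\mathbb T)$. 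Distinct copies have disjoint support, so $\psi_g,\psi_h$ commute for $g\neq h$, and the rotation carries the $h$-th copy to the $(hg)$-th copy, so $\phi(h)\psi_g(x)\phi(h)^{-1}=\psi_{hg}(x)$; hence the hypotheses of Proposition \ref{homom_prop} hold in $\mathcal D$ and produce $\omega_r\colon K_r=K_{r-1}\wr\mathbb Z_{n_r}\to\mathcal D(\partial\mathbb T)$. Because the inherited $\psi_h$ are defined only up to isotopy, $\omega_r$ genuinely takes values in $\mathcal D$ and the cork is only weakly equivariant. For \ref{thm:wreath:weak:weak} the innermost $\mathbb Z^m$ acts within each block by the doubled powers $f^{2n_i}$ of Gompf's boundary diffeomorphism and is combined with the $K_r$-action by a further application of Proposition \ref{homom_prop} (with $N=\mathbb Z^m$ and $H=K_r$), exactly as in Section \ref{sec:cork}, producing the full group $\mathbb Z^m\wr K_r$; for \ref{thm:wreath:weak:Stein} the cork $C_r$ is a boundary sum of copies of $\mathbb S$ and hence Stein, since a boundary sum of Stein domains is Stein.

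It then remains to check effectiveness, equivalently that no nontrivial $\omega_r(g)$ extends over $C_r$. As in Section \ref{sec:cork} I would identify each twist $Y_g^\iota$ with a twist $X(\,\cdot\,)$ of the underlying closed $4$-manifold and recover $g$ from the twisting data: the doubling $f^{2n_i}$ keeps the contribution of the $\mathbb Z^m$-factor even while the seed $\delta$ makes a single coordinate odd, so that $g\mapsto X(\,\cdot\,)$ is injective. For \ref{thm:wreath:weak:weak} the distinctness of the resulting manifolds is supplied by Tange's independent-twist result, Theorem \ref{emb_of_z_cork}, applied to all copies of Gompf's cork at once; for \ref{thm:wreath:weak:Stein} the analogous input is the single-twist distinctness of Theorem \ref{emb_of_2_cork}.

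The step I expect to be the real obstacle is the verification of effectiveness in the Stein case, compounded by realizing the permutation weakly. Since $K_r$ is not a subgroup of $\SO(4)$, its regular permutation of the copies cannot be realized by an honest finite-order diffeomorphism of a contractible manifold: on the non-contractible Cayley-graph model underlying \ref{thm:wreath:eqv:eqvgen} it is genuine, but insisting on contractibility forces the construction into $\mathcal D$, and one must check that the successive $\mathbb Z_{n_j}$-rotations assemble, through $r$ applications of Proposition \ref{homom_prop}, into a single faithful homomorphism of $K_r$ into $\mathcal D$. Furthermore, Theorem \ref{emb_of_2_cork} supplies only single-twist distinctness for Akbulut's cork, whereas Theorem \ref{emb_of_z_cork} gives the stronger independent-twist distinctness for Gompf's cork; I therefore expect the inductive statement to require arranging that every nontrivial element of $K_r$ twists a single distinguished copy of $\mathbb S$, so that the weaker single-twist input still suffices, and ensuring that this property persists under each wreathing is the heart of the matter.
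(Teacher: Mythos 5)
Your toolkit matches the paper's (the \AKMR{} shrink-and-twist seed, Tange's Theorem \ref{emb_of_z_cork} resp.\ Theorem \ref{emb_of_2_cork}, doubled twist exponents, Proposition \ref{homom_prop} applied in $\mathcal D$ rather than $\Diff$), but your architecture --- induction on the corks themselves, re-embedding $n_r$ disjoint copies of the composite cork $C_{r-1}$ at each stage --- has a genuine gap exactly where you suspected, and it is not a detail to be arranged along the way: it is the paper's main content. Two intertwined problems. First, the inductive hypothesis ``enough disjoint effective embeddings of $C_{r-1}$'' is not available: Theorems \ref{emb_of_2_cork} and \ref{emb_of_z_cork} supply disjoint effective embeddings only of the \emph{basic} corks ($\mathbb S$, resp.\ Gompf's $C(\bar r,\bar s;\bar m)$), and the composite cork's effectiveness is seed-dependent --- the purely permutational part of an inherited $K_{r-1}$-class extends over a fresh, unseeded copy of $C_{r-1}$ (that extension is how the weak action was built in the first place), so a re-embedded copy is simply not $K_{r-1}$-effective and the induction does not close. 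Second, even granting the construction, the twist bookkeeping fails: with a single seed, twisting by $\omega(F,g)$ produces $X(2\tilde F+\delta_{(\widehat\phi(g)p,1)})$, whose dependence on $g$ (and, in part \ref{thm:wreath:weak:weak}, even the faithful recording of $F$ via $\tilde F$) factors through the orbit map $g\mapsto\widehat\phi(g)p$. After one wreathing, the natural permutation representation of $K_{r-1}\wr\mathbb Z_{n_r}$ on the copies is faithful but has no free point-orbit --- base elements supported away from the coordinate hit by $p$ fix the relevant copy --- so distinct group elements yield identical twist data. Your closing hope that the ``single distinguished copy'' property ``persists under each wreathing'' is precisely what is false.

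The paper's resolution is to never iterate the cork construction at all. It abstracts the combinatorics into \emph{blocks} (a $4$-ball with marked boundary $3$-balls) carrying weak actions, with property (P1) ($\widehat\phi$ injective) and the stronger (P2) (some marked ball has a free orbit). Wreathing is Lemma \ref{wreath_lemma} ((P2) outside, (P1) inside, yielding (P1) for $H\wr G$); the key idea missing from your plan is Lemma \ref{p1_to_p2}, which \emph{restores} (P2) from (P1) by gluing $1+n+\cdots+n^{n-1}$ copies of the block into a depth-$n$ tree with the diagonal action, so that the orbit of a single leaf $e'(i_1,\dots,i_n)$ records the entire permutation $\widehat\phi(g)$. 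Corks are then attached exactly once, at the end (Lemma \ref{weak_cork_lem}), so Tange's and \AKMR{}'s embedding theorems are only ever invoked for basic corks, and (P2) makes the single-seed twist data injective on the group --- in particular your worry about the weaker single-twist input in the Stein case is answered by (P2), not by strengthening Theorem \ref{emb_of_2_cork}. One further divergence: your honest linear $\mathbb Z_{n_r}\subset\SO(4)$ rotation is usable only at the root of the tower; a block that must later be glued into a parent along $\epsilon$ needs its attaching ball fixed \emph{pointwise}, which a nontrivial rotation cannot achieve. This is why Lemma \ref{zn_case} instead uses the corkscrew diffeomorphism $f(x,s,t)=(e^{2\pi is/n}x,s,t)$, of infinite order in $\Diff$ but of order $n$ in $\mathcal D$ because its $n$-th power is a Dehn twist along $3$-balls that push into the boundary, trivial by \cite[Lemma 3.1]{auckly2017}.
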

\noindent Recall that we omit parentheses in an iterated wreath product, see Section 2.

\begin{prop}
  Theorem \ref{thm:wreath:weak} \ref{thm:wreath:weak:weak} and \ref{thm:wreath:weak:Stein} are equivalent to
  Theorem \ref{thm:main} \ref{thm:main:weak} and \ref{thm:main:weakStein} respectively.
\end{prop}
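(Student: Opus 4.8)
The plan is to show that the equivalence follows from the structural relationship between wreath products and group extensions, exactly as in the proof of the analogous Proposition for Theorem \ref{thm:wreath:eqv}. First I would recall that the goal is to prove Theorem \ref{thm:main} \ref{thm:main:weak} and \ref{thm:main:weakStein} for an arbitrary extension $G$ of $\mathbb Z^m$ by a finite solvable group $H$, assuming we have already established Theorem \ref{thm:wreath:weak} for the specific iterated wreath products appearing in its statement. The two directions of the equivalence must both be addressed, but the substantive content is the reduction of the general statement to the wreath-product statement.

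For the forward reduction, the key observation is that any group into which $G$ embeds and for which we have a weakly equivariant cork will itself yield a weakly equivariant $G$-cork, because a subgroup $G\subset G'$ composed with the homomorphism $\psi\colon G'\to\mathcal D(\partial C)$ gives a homomorphism $G\to\mathcal D(\partial C)$ that still sends every nontrivial element to a class with no extension over $C$. So the task is purely group-theoretic: embed $G$ into one of the target wreath products. Here I would invoke Corollary \ref{kk_cor_fs}, which embeds the finite solvable group $H$ into some $\mathbb Z_{p_1}\wr\dots\wr\mathbb Z_{p_r}$. For part \ref{thm:main:weakStein} this finishes things, since $G=H$ and the embedding lands exactly in the group for which Theorem \ref{thm:wreath:weak} \ref{thm:wreath:weak:Stein} supplies a Stein cork. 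For part \ref{thm:main:weak}, I would first apply Theorem \ref{kk_thm} to write $G$ as a subgroup of $\mathbb Z^m\wr H$ (using that $\mathbb Z^m\triangleleft G$ with quotient $H$), and then combine the embedding $H\hookrightarrow\mathbb Z_{p_1}\wr\dots\wr\mathbb Z_{p_r}$ with the functoriality of $\wr$ on subgroups — noted in the excerpt's footnote that $H'\wr G'$ embeds into $H\wr G$ when $H'\subset H$, $G'\subset G$ — to obtain an embedding of $\mathbb Z^m\wr H$ into $\mathbb Z^m\wr(\mathbb Z_{p_1}\wr\dots\wr\mathbb Z_{p_r})$, which is precisely the group handled by Theorem \ref{thm:wreath:weak} \ref{thm:wreath:weak:weak}.

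The reverse direction is essentially immediate, since each specific wreath product $\mathbb Z^m\wr(\mathbb Z_{n_1}\wr\dots\wr\mathbb Z_{n_r})$ is itself an extension of $\mathbb Z^{m\cdot n_1\cdots n_r}$ by the finite solvable group $\mathbb Z_{n_1}\wr\dots\wr\mathbb Z_{n_r}$ (solvability of the iterated wreath product having been noted in Section \ref{sec:pre}), so it is a special case of the groups in Theorem \ref{thm:main}. I expect the main obstacle — really the only point requiring care — to be verifying that the weak-equivariance (non-extension) property is genuinely inherited by subgroups: one must check that restricting $\psi$ to $G$ keeps every nontrivial class non-extendable, which is clear because non-extendability is a property of the individual mapping class and $G\setminus\{1\}\subset G'\setminus\{1\}$. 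Once that is in hand, the proof is a one-line appeal to the embedding results, mirroring the proof of the corresponding Proposition for part \ref{thm:main:eqv}.
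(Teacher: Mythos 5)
Your proposal is correct and takes essentially the same route as the paper, whose entire proof is the one line ``Theorem \ref{kk_thm} and Corollary \ref{kk_cor_fs} imply the proposition''; you merely make explicit the steps the paper leaves implicit (restriction of a weakly equivariant cork to a subgroup, and the footnoted fact that $\mathbb Z^m\wr H$ embeds into $\mathbb Z^m\wr(\mathbb Z_{p_1}\wr\dots\wr\mathbb Z_{p_r})$). One cosmetic slip in your reverse direction: the kernel is $\mathbb Z^{m\cdot\left|\mathbb Z_{n_1}\wr\dots\wr\mathbb Z_{n_r}\right|}$, not $\mathbb Z^{m\cdot n_1\cdots n_r}$ (the order of an iterated wreath product is not the product of the $n_i$; e.g.\ $|\mathbb Z_{n_1}\wr\mathbb Z_{n_2}|=n_1^{n_2}n_2$), but this is harmless since Theorem \ref{thm:main} permits any rank $m\ge0$.
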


\begin{proof}
  Theorem \ref{kk_thm} and Corollary \ref{kk_cor_fs} imply the proposition.
\end{proof}

We use several lemmas to prove them.
First, we define a \textit{block}, which will be used throughout this section.

\begin{defn}
  A block $\Phi=(A;e_1,\dots,e_n;\epsilon)$ is the tuple of a compact 4-manifold $A$ diffeomorphic to $B^4$ and
  disjoint embeddings $e_1,\dots,e_n,\epsilon:B^3\rightarrow\partial A$.
\end{defn}
\noindent
Although $A$ is diffeomorphic to $B^4$,
it becomes useful later to use distinguishable symbols for $B^4$ of blocks.
This is the reason why we include $A$ in the above definition.

Let $f$ be a diffeomorphism on $A$.
If we have $f\circ\epsilon=\epsilon$ and there exists $\sigma\in\mathfrak S_n$
such that $f\circ e_i=e_{\sigma(i)}$ for all $i\in\finset{n}$,
we call $f$ a \textit{diffeomorphism of the block} $\Phi$.
The set of all diffeomorphisms of $\Phi$ is denoted by $\Diff(\Phi)$.
The natural homomorphism
$\Diff(\Phi)\rightarrow\mathfrak S_n$
defined by $f\mapsto\sigma$ is denoted by \ $\widehat\cdot $\ , i.e., $\widehat f=\sigma$.
Two diffeomorphisms $f$ and $g$ of $\Phi$ are called \textit{diffeotopic over} $\Phi$
if there exists a diffeotopy between $f$ and $g$
consisting of diffeomorphisms
in $\Diff(\Phi)$.
The set of all diffeotopy classes over $\Phi$ is denoted by $\mathcal D(\Phi)$.
The homomorphism \ $\widehat\cdot $ \ factors through the quotient map
$\Diff(\Phi)\rightarrow\mathcal D(\Phi)$
and we use the same symbol \ $\widehat\cdot :\mathcal D(\Phi)\rightarrow\mathfrak S_n$.

\begin{defn}
Let $G$ be a group.
For a homomorphism $\phi:G\rightarrow\mathcal D(\Phi)$,
we define the following two properties:
\begin{enumerate}
  \setlength{\parskip}{0cm} 
  \setlength{\itemsep}{0cm} 
\item[(P1)] The map $\widehat\phi$ is injective.
\item[(P2)] There exists $p\in\{1,\dots,n\}$ such that $\widehat\phi(g)p$ are pairwise different for all $g\in G$.
\end{enumerate}
\end{defn}

\begin{rem}
  The above definitions are inspired by the definition of $\mathcal G^*$ in \cite[Section 4]{gompf2018}. 
\end{rem}

\begin{lem}
  \label{weak_cork_lem}
  Let $G$ be a group.
  Assume that there exist a block $\Phi=(A;$ $e_1,\dots,e_n;$ $\epsilon)$
  and a homomorphism $\phi:G\rightarrow\mathcal D(\Phi)$ with (P2).
  Then there exists a weakly equivariant $\mathbb Z^m\wr G$-cork for any $m\ge1$
  and a weakly equivariant Stein $G$-cork.
\end{lem}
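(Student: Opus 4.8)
The plan is to construct the cork $C$ as a boundary sum of copies of a base cork, glued together according to the combinatorial structure recorded by the block $\Phi$, and then use the permutation action guaranteed by property (P2) to "track" which copy each element of $G$ sends a distinguished copy to. The key mechanism is that (P2) supplies a point $p \in \{1,\dots,n\}$ whose $\widehat\phi$-orbit $\{\widehat\phi(g)p : g \in G\}$ consists of $|G|$ distinct indices, so the embeddings $e_p, e_{\widehat\phi(g)p}$ let us attach corks in a way that a nontrivial $g$ genuinely moves the distinguished cork to a different location. First I would set up the building blocks: take Gompf's cork $C(\bar r, \bar s; \bar m)$ (for the $\mathbb Z^m$ factor) and/or the Stein cork $\mathbb S$ from Theorem \ref{emb_of_2_cork} (for the Stein $G$-cork), attach one copy into each of the regions $e_i(B^3) \subset \partial A$ for the relevant indices $i$, and use the $\epsilon$-face to carry the $\mathbb Z^m$-twisting data.

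\textbf{Key steps in order.} First I would build the underlying $4$-manifold $C$ by gluing copies of the base cork along the $n$ boundary $B^3$'s of $A$, verifying that $C$ is contractible (for the Stein case this is automatic since $A \cong B^4$ and we boundary-sum contractible pieces; for the $\mathbb Z^m \wr G$ case we boundary-sum Gompf corks, which are contractible). Second, I would promote the given homomorphism $\phi : G \to \mathcal D(\Phi)$ to a homomorphism $G \to \mathcal D(\partial C)$ (and, in the $\mathbb Z^m$ case, combine it with the $\mathbb Z^m$-actions $\psi_h$ coming from the Gompf-cork twist generators $f$, invoking Proposition \ref{homom_prop} exactly as in the proof of Theorem \ref{thm:wreath:eqv} to assemble the full $\mathbb Z^m \wr G$-action). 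This is where the semidirect-product bookkeeping lives: the block diffeomorphisms permute the copies via $\widehat\cdot$, and the $\mathbb Z^m$-twists live on the individual copies, so the commutation and conjugation hypotheses of Proposition \ref{homom_prop} are met by construction. Third, I would embed $C$ effectively into a closed manifold $X$ using Theorem \ref{emb_of_2_cork} or \ref{emb_of_z_cork}, realizing each $e_i$-region as one of the disjoint cork embeddings, so that cork-twisting by $g$ produces a manifold whose diffeomorphism type records $\widehat\phi(g)p$.

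\textbf{The non-extension argument} is the heart of the matter, and it is where (P2) does its work. For a nontrivial $g \in G$, I need to show no representative of $\psi(g) \in \mathcal D(\partial C)$ extends over $C$. Suppose one did; then the induced self-homeomorphism of the twisted closed manifold would be a diffeomorphism $X_g^e \cong X$, contradicting effectiveness. Concretely, because $\widehat\phi(g)p \neq p$, the twist by $g$ moves the distinguished cork copy sitting at position $p$ to position $\widehat\phi(g)p$, and Theorem \ref{emb_of_2_cork} (resp. \ref{emb_of_z_cork}) guarantees these two configurations give nondiffeomorphic manifolds; an extension over $C$ would yield a diffeomorphism between them. \textbf{The main obstacle} I anticipate is the Stein part of the conclusion: one must check that the boundary-sum construction preserves the Stein condition, which requires that the block structure be compatible with Stein handle attachments and that $\mathbb S$ from Theorem \ref{emb_of_2_cork} (Akbulut's cork summands) genuinely yields a Stein $C$ — in contrast to the $\mathbb Z^m$ case, where the Gompf-cork summands are not known to be Stein, so the two clauses of the lemma must be handled by two parallel but distinct gluing recipes. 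A secondary subtlety is verifying well-definedness of the diffeotopy-class action $\psi(g)$ independent of representative, which follows since diffeotopic-over-$\Phi$ maps remain diffeotopic after the gluing, but must be stated carefully.
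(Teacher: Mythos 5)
There is a genuine gap at the heart of your non-extension argument: nothing in your construction actually distinguishes the cork copy at position $p$. Every summand attached along the $e_i$'s is an identical copy of the base cork, and the extension of $\phi(g)$ over the glued-up compact manifold $\overline{\mathbb T}$ (block diffeomorphism on $\alpha(A)$, permutation of the identical summands elsewhere) is a diffeomorphism of \emph{all} of $\overline{\mathbb T}$. Hence for a pure permutation element $(0,g)$ the boundary map visibly extends over your candidate cork, and twisting along it returns $X$ itself; your appeal to Theorems \ref{emb_of_2_cork} and \ref{emb_of_z_cork} gives nothing here, since those theorems distinguish manifolds with \emph{different twist vectors}, while in your configuration both vectors are zero. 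The paper supplies exactly the missing asymmetry via the \AKMR{} shrink-and-pre-twist step, which your proposal never mentions: shrink $s(p,1)$ to $s'(p,1)$ and set $\mathbb T:=\overline{\mathbb T}^{s'(p,1)}_1$ inside $Y:=X^{s'(p,1)}_1\cong X(\delta_{(p,1)})$. It is this once-pre-twisted summand that the permutation moves, so that $Y^\iota_{(0,g)}\cong X(\delta_{(\widehat\phi(g)p,1)})$, and (P2) makes these pairwise distinct; without the pre-twist, $(\overline{\mathbb T},\omega)$ fails to be a weakly equivariant cork for every nontrivial $g\in G$.

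A second, related omission is the doubled exponent: the paper defines $\psi_g(k)$ to be $f^{2k_j}$ (not $f^{k_j}$) on the supports, so that $Y^\iota_{(F,g)}\cong X(2\tilde F+\delta_{(\widehat\phi(g)p,1)})$. The parity of the twist vector then isolates the $\delta$-coordinate, so the diffeomorphism type first recovers $\widehat\phi(g)p$ (hence $g$, by (P2)) and then $F$. With undoubled exponents, an element with $\tilde F=\delta_{(p,1)}-\delta_{(\widehat\phi(g)p,1)}$ would twist $Y$ back to itself, and the derivation of non-extension from nondiffeomorphic twists would collapse for such elements. Your deferral to ``exactly as in the proof of Theorem \ref{thm:wreath:eqv}'' covers only the assembly of the homomorphism via Proposition \ref{homom_prop}, not these two geometric points, which are the actual content of the lemma. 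Your remaining observations --- contractibility of the boundary sum, the Stein conclusion coming from Akbulut-cork summands in the second clause, and representative-independence of the diffeotopy classes after gluing --- are correct and agree with the paper.
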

\noindent Though the proof is similar to that of Theorem \ref{thm:wreath:eqv},
we describe it in detail.
Recall again that $C(r,s;m)$ denotes Gompf's cork and $f$ denotes the generator of the $\mathbb Z$-action (see Section \ref{sec:pre}).
\begin{proof}
  First, we prove the former. Fix integers $\bar r$,$\bar s$, and $\bar m$ with $\bar r,\bar s>0$ and $\bar m\neq0$,
  and let $C:=C(\bar r,\bar s;\bar m)$.

  Let $X$ be a closed 4-manifold and let $s(i,j):C\rightarrow X$ be disjoint embeddings
  indexed by $(i,j)\in\finset{n}\times\finset{m}$.
  For $F\in\mathbb Z^{\finset{n}\times\finset{m}}$, we use $X(F)$ to denote the manifold
  obtained by twisting each $s(i,j)(C)$ by $F(i,j)\in\mathbb Z$.
  By Theorem \ref{emb_of_z_cork},
  we can assume that two manifolds $X(F)$ and $X(G)$ are nondiffeomorphic if $F\neq G$.

  Next, we join them via embedded $A$.
  Let $\alpha:A\rightarrow X$ be an embedding of $A$ into $X$ disjoint from $s(i,j)(C)$.
  Choose two fixed points $x,y\in \partial C$ of $f$.
  We connect $\alpha\circ e_i(0)$ with $s(i,1)(x)$,
  $s(i,1)(y)$ with $s(i,2)(x)$, ..., and $s(i,m-1)(y)$ with $s(i,m)(x)$ by 1-handles for each $i\in\finset{n}$.
  The resulting submanifold in $X$ is denoted by $\overline{\mathbb T}$.
  See Figure \ref{fig:boundary_sum_weak}.

  \begin{figure}
  \centering
  \includegraphics[width=8cm]{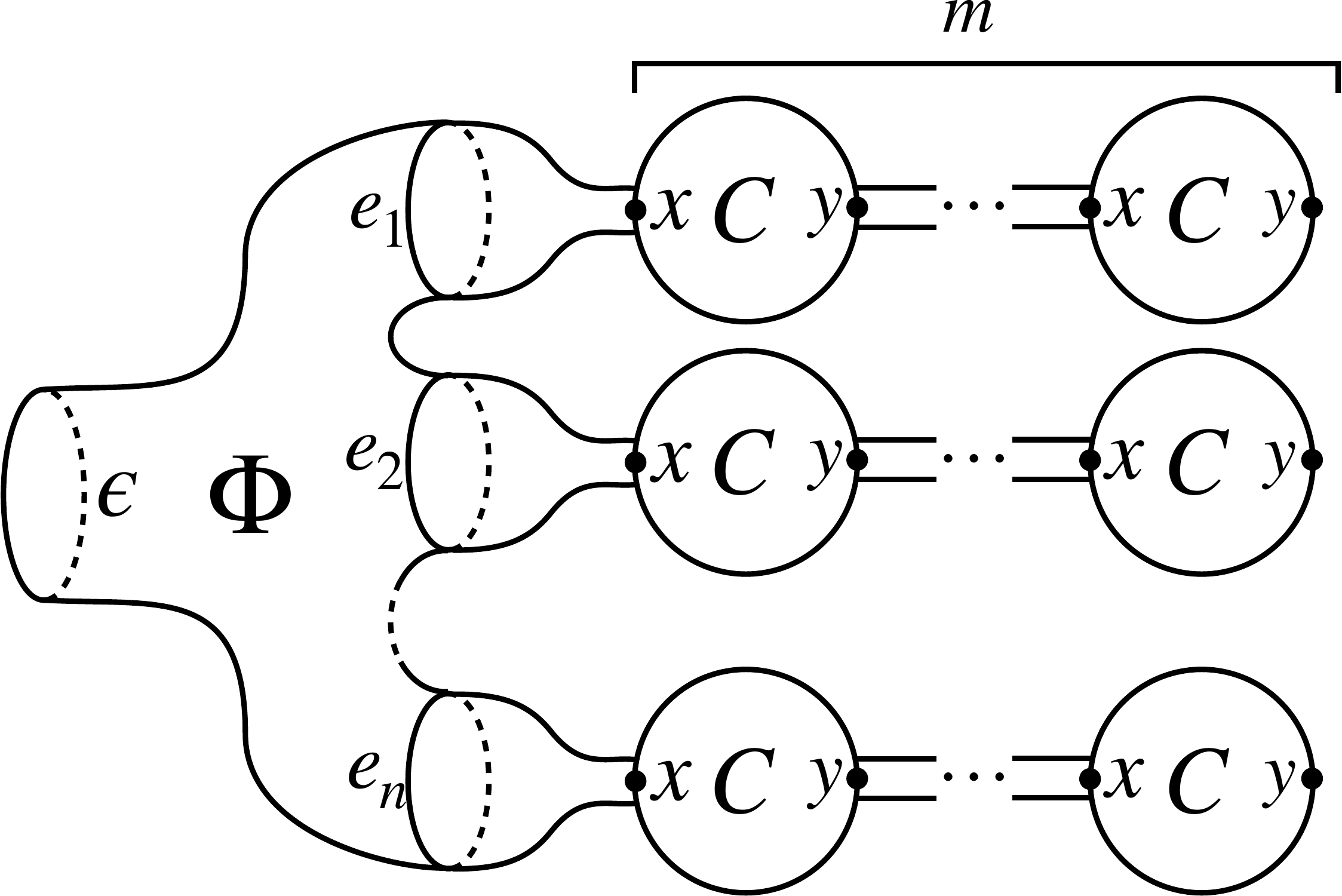}
  \caption{The manifold $\overline{\mathbb T}$.}
  \label{fig:boundary_sum_weak}
  \end{figure}

  Let $p\in\finset{n}$ be a point as in (P2).
  We use the trick in \cite{auckly2017}.
  Let $s'(p,1)$ be a shrunk $s(p,1)$ as in the proof of Theorem \ref{thm:wreath:eqv}.
  We define $\mathbb T:=\overline{\mathbb T}^{s'(p,1)}_1$ and $Y:=X^{s'(p,1)}_1$.
  The manifold $\mathbb T$ is embedded in $Y$ naturally.

  We define actions on $\partial\mathbb T$.
  Let $\phi^e:G\rightarrow\mathcal D(\overline{\mathbb T})$ be the homomorphism
  which is $\phi$ on $\alpha(A)$ and extended over $\overline{\mathbb T}$
  to be the permutation of components of $\overline{\mathbb T}-\alpha(A)$ by $\widehat\phi$.
  We restrict $\phi^e$ to the boundary $\partial\overline{\mathbb T}=\partial\mathbb T$
  and get the homomorphism $\phi^e:G\rightarrow\mathcal D(\partial\mathbb T)$;
  we use the same symbol $\phi^e$.
  Next, fix $g\in G$ and let $i:=\widehat\phi(g)p$.
  For any $k=(k_1,\dots,k_m)\in\mathbb Z^m$, let $\psi_g(k)$ be the diffeomorphism
  which is $f^{2k_j}$ on the support of $f$ in $s(i,j)(\partial C)$ for all $j\in\finset{m}$
  and the identity otherwise.
  Note that the exponent is doubled.
  The map $\psi_g:\mathbb Z^m\rightarrow\Diff(\partial\mathbb T)$ is a group homomorphism.
  Composing the quotient map $\Diff(\partial\mathbb T)\rightarrow\mathcal D(\partial\mathbb T)$,
  we get $\psi_g:\mathbb Z^m\rightarrow\mathcal D(\partial\mathbb T)$;
  we use the same symbol again.
  Since $\phi^e$ and $\psi_g$ for $g\in G$ satisfy the assumptions in Proposition \ref{homom_prop},
  we get the homomorphism $\omega:\mathbb Z^m\wr G\rightarrow\mathcal D(\partial\mathbb T)$.

  The inclusion map $\iota:\mathbb T\rightarrow Y$ is a $\mathbb Z^m\wr G$-effective embedding
  since we have $Y^\iota_{(F,g)}\cong X(2\tilde F+\delta_{(\widehat\phi(g)p,1)})$ for any $(F,g)\in\mathbb Z^m\wr G$,
  where $\tilde F\in\mathbb Z^{\finset{n}\times\finset{m}}$ is the element
  defined by $\tilde F(\widehat\phi(x)p,j)=F(x)(j)$ for any $x\in G$ and $j\in\finset{m}$
  and $\tilde F(i,j)=0$ otherwise.
  Thus $(\mathbb T,\omega)$ is a weakly equivariant $\mathbb Z^m\wr G$-cork.

  The latter is proved in the same way, and it is exactly the trick by \AKMR{} in \cite{auckly2017}.
  Replace Theorem \ref{emb_of_z_cork} with Theorem \ref{emb_of_2_cork} and
  Gompf's cork with the cork in Theorem \ref{emb_of_2_cork},
  and ignore the $\mathbb Z^m$-actions $\psi_g$ in the above proof.
\end{proof}

\begin{lem}
  \label{wreath_lemma}
  Let $G$, $H$ be groups.
  Assume that there exist two blocks $\Phi=(A;$ $e_1,\dots,e_l;$ $\epsilon)$ and $\Psi=(A';$ $e_1',\dots,e_m';$ $\epsilon')$ and
  two homomorphisms $\phi:G\rightarrow\mathcal D(\Phi)$ and $\psi:H\rightarrow\mathcal D(\Psi)$
  such that $\phi$ is (P2) and $\psi$ is (P1).
  Then there exist a block $\Omega=(\bar A;\bar e_1,\dots,\bar e_n;\bar\epsilon)$
  and a homomorphism $\omega:H\wr G\rightarrow\mathcal D(\Omega)$ such that $\omega$ is (P1).
\end{lem}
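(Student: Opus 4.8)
The plan is to realize geometrically the faithful imprimitive permutation representation of the regular wreath product $H\wr G=H^G\rtimes G$ on the set $G\times\finset{m}$. First I would extract the combinatorial skeleton from the hypotheses. Property (P2) for $\phi$ provides a point $p$ for which the orbit $O:=\{\widehat\phi(g)p:g\in G\}\subset\finset{l}$ consists of $|G|$ distinct points and is $G$-equivariantly identified with the regular $G$-set via $\widehat\phi(x)p\leftrightarrow x$; in particular $G$ is finite. Likewise (P1) for $\psi$ makes $\widehat\psi\colon H\hookrightarrow\mathfrak S_m$ faithful, forcing $H$ finite. The target block $\Omega$ will have $n=|G|\cdot m$ marked embeddings indexed by $G\times\finset{m}$, and $\widehat\omega$ will be the permutation $(x,j)\mapsto(gx,\widehat\psi(F(gx))j)$ of $(F,g)\in H\wr G$, which is manifestly faithful.

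Next I would build $\Omega$ by plugging. Into each embedding $e_i$ with $i\in O$ I glue a copy of the block $\Psi$, identifying a collar of $\epsilon'$ with a collar of $e_i$; the $|G|$ glued copies are thereby indexed by $x\in G$. The remaining embeddings $e_i$ with $i\notin O$ are left unused (or capped with $4$-balls). Since $A$ and each $A'$ are diffeomorphic to $B^4$ and plugging along a $B^3\subset\partial$ preserves this, the result $\bar A$ is again diffeomorphic to $B^4$. I set $\bar\epsilon:=\epsilon$ and let the marked embeddings $\bar e_{(x,j)}$ be the images of $e_j'$ in the copy of $\Psi$ indexed by $x$; these stay disjoint from one another and from $\bar\epsilon$, so $\Omega=(\bar A;(\bar e_{(x,j)})_{(x,j)};\bar\epsilon)$ is a genuine block.

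Then I define two families of diffeomorphisms. For $g\in G$, a representative of $\phi(g)\in\mathcal D(\Phi)$ permutes the $e_i$ and preserves $O$ by $\widehat\phi(g)$; extending it to carry the glued copies of $\Psi$ rigidly gives $\Phi_*(g)\in\mathcal D(\Omega)$ fixing $\bar\epsilon$ and sending the copy indexed by $x$ to the copy indexed by $gx$. For each $x\in G$ and $h\in H$, applying a representative of $\psi(h)$ on the copy indexed by $x$ and the identity elsewhere gives $\Psi^x_*(h)\in\mathcal D(\Omega)$, supported away from $\bar\epsilon$. Because conjugation by $\Phi_*(g)$ moves the support from copy $x$ to copy $gx$, we get $\Phi_*(g)\Psi^x_*(h)\Phi_*(g)^{-1}=\Psi^{gx}_*(h)$, and $\Psi^x_*(h)$, $\Psi^{x'}_*(h')$ commute for $x\neq x'$ since they have disjoint support. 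These are exactly the hypotheses of Proposition \ref{homom_prop} (with fiber $H$, finite top group $G$, and target $\mathcal D(\Omega)$), so $\omega(F,g):=\bigl(\prod_{x\in G}\Psi^x_*(F(x))\bigr)\Phi_*(g)$ is a homomorphism $H\wr G\to\mathcal D(\Omega)$. Computing its image under $\widehat\cdot$ gives the permutation $(x,j)\mapsto(gx,\widehat\psi(F(gx))j)$, which is the identity only if $g=1$ and $\widehat\psi(F(x))=\id$ for every $x$, whence $F=1$ by (P1). Thus $\widehat\omega$ is injective, i.e. $\omega$ satisfies (P1).

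The genuinely delicate part is the diffeotopy bookkeeping rather than the group theory. I must verify that extending a representative of $\phi(g)$ over the glued copies of $\Psi$, and assembling the local representatives of $\psi(h)$, descend to well-defined classes in $\mathcal D(\Omega)$ independent of the chosen representatives and collar identifications, and that $g\mapsto\Phi_*(g)$ and $h\mapsto\Psi^x_*(h)$ are genuine homomorphisms into $\mathcal D(\Omega)$, not merely into $\Diff$. Arranging the gluings so that every marked $B^3$ stays disjoint and $\bar A$ stays diffeomorphic to $B^4$, and checking that the copies indexed by distinct elements of $G$ are carried to one another by $\Phi_*(g)$ exactly according to the regular action, are the remaining points that need care; each is routine once the indexing of the $n=|G|m$ plugs by $G\times\finset{m}$ is fixed.
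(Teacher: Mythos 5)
Your proof is correct and takes essentially the same route as the paper: plug copies of $\Psi$ into marked balls of $\Phi$, extend a representative of $\phi(g)$ by permuting the copies, localize $\psi$ on the copy indexed through the (P2)-point $p$, verify the conjugation and disjoint-support relations, and invoke Proposition \ref{homom_prop} to assemble $\omega$, whose induced permutation $(x,j)\mapsto(gx,\widehat\psi(F(gx))j)$ is faithful. The only (harmless) deviation is that you glue $\Psi$-copies only along the orbit $\{\widehat\phi(g)p : g\in G\}$ and index them by $G$ itself, whereas the paper glues a copy into every $e_i$, $i\in\finset{l}$; both yield a valid block and the same (P1) conclusion.
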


\begin{proof}
  We first glue $\Phi$ and $l$ copies of $\Psi$ in the following way:
  for any $i\in\finset{l}$, we glue $e_i$ in the block $\Phi$ and $\epsilon'$ in the $i$-th copy $\Psi_i$ of $\Psi$.
  The resulting manifold is denoted by $\bar A$.
  Let $\bar e_{i,j}$ be $e_j'$ in $\Psi_i$ for $(i,j)\in\finset{l}\times\finset{m}$ and let $\bar\epsilon$ be $\epsilon$ in $\Phi$.
  We define a block $\Omega:=(\bar A;\dots,\bar e_{i,j},\dots;\bar\epsilon)$.
  See Figure \ref{fig:omega}.

  \begin{figure}
  \centering
  \includegraphics[width=7cm]{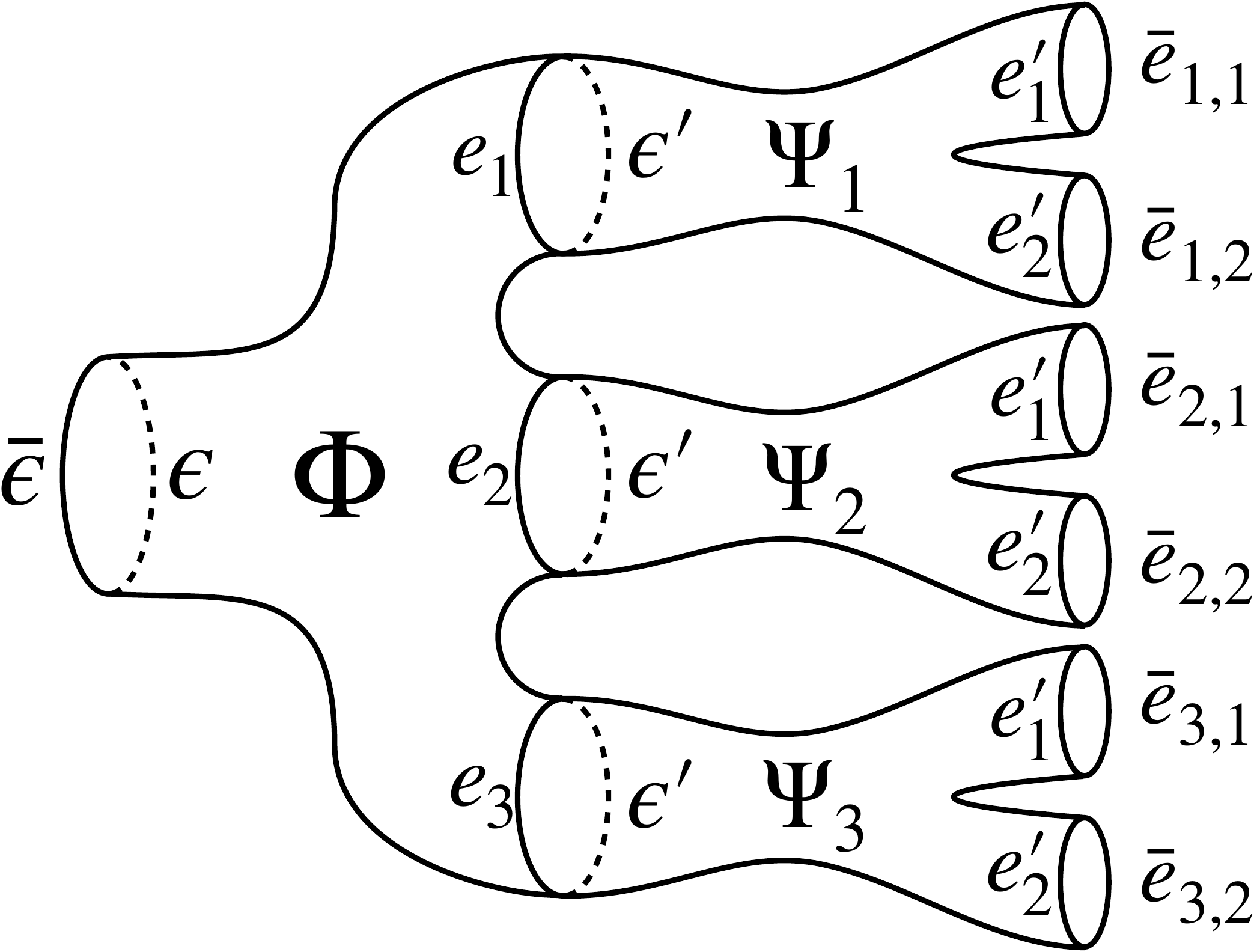}
  \caption{The block $\Omega$ in the case of $l=3,m=2$.}
  \label{fig:omega}
  \end{figure}

  We extend $\phi:G\rightarrow\mathcal D(\Phi)$ to $\phi:G\rightarrow\mathcal D(\Omega)$
  defined to be the permutation of $\Psi_i$ by $\widehat\phi$ on the rest.
  Let $p\in\finset{l}$ be a point as in (P2).
  Fix $g\in G$ and let $i:=\widehat\phi(g)p$.
  We extend $\psi:H\rightarrow\mathcal D(\Psi_i)$ on the $i$-th copy of $\Psi$
  to $\psi_g:H\rightarrow\mathcal D(\Omega)$ which is the identity on the rest.
  These $\phi$ and $\psi_g$ for $g\in G$ satisfy the assumptions in Proposition \ref{homom_prop}
  and we get the homomorphism $\omega:H\wr G\rightarrow\mathcal D(\Omega)$.
  It is easy to see that $\omega$ is (P1).
\end{proof}

\begin{lem}
  \label{zn_case}
  For any $n\ge 2$, there exist a block $\Phi=(A;e_1,\dots,e_{n^2};\epsilon)$
  and a homomorphism $\phi:\mathbb Z_n\rightarrow\mathcal D(\Phi)$ with (P2).
\end{lem}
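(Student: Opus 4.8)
The plan is to realize $\mathbb Z_n$ by a single honest rotation of $B^4$ that has been damped to the identity near one boundary ball. Take $A=B^4\subset\mathbb C^2$ and let $R$ be the isoclinic (Hopf) rotation $R(z_1,z_2)=(e^{2\pi i/n}z_1,\,e^{2\pi i/n}z_2)$; it has order $n$ and acts freely on $S^3=\partial A$. Fix a point $q\in S^3$ and a small round ball $\epsilon(B^3)$ centered at $q$, and choose $n$ free $R$-orbits of small disjoint $3$-balls in $S^3$, all lying outside a neighborhood $U$ of $q$; this yields $n^2$ embeddings $e_1,\dots,e_{n^2}$. Fixing one ball per orbit and defining the others by applying $R$, we arrange $R\circ e_i=e_{\sigma(i)}$ for $\sigma\in\mathfrak S_{n^2}$ a product of $n$ disjoint $n$-cycles. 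Finally I damp $R$ near $q$: let $g$ be the position-dependent isoclinic rotation $g(z)=(e^{i\beta(z)}z_1,e^{i\beta(z)}z_2)$ with $\beta\equiv2\pi/n$ outside $U$ and $\beta\equiv0$ on $\epsilon(B^3)$. Then $g$ is a diffeomorphism of $A$ with $g\circ\epsilon=\epsilon$ and $g\circ e_i=e_{\sigma(i)}$, so $g\in\Diff(\Phi)$ and $\widehat g=\sigma$.

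I would then define $\phi\colon\mathbb Z_n\to\mathcal D(\Phi)$ by $\phi(k):=[g]^k=[g^k]$. Because $\widehat\phi(k)=\sigma^k$ and every $\sigma$-orbit is free of size $n$, any $p\in\finset{n^2}$ has the points $\widehat\phi(k)p$ pairwise distinct for $k=0,\dots,n-1$, so (P2) is immediate. The content of the lemma is that $\phi$ is well defined, i.e. that $[g]^n=1$ in $\mathcal D(\Phi)$; equivalently, that $g^n$ is diffeotopic to the identity over $\Phi$.

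This last step is the main obstacle. Outside $U$ one has $g^n=R^n=\id$, so $g^n$ fixes $\epsilon$ and every $e_i$ and is supported in the collar $U\setminus\epsilon(B^3)$, where it is the twist $z\mapsto(e^{in\beta(z)}z_1,e^{in\beta(z)}z_2)$ whose rotation angle $n\beta$ sweeps from $0$ to $2\pi$. A twist of this localized form is isotopic to the identity rel its complement as soon as the defining loop $\theta\mapsto(e^{i\theta}z_1,e^{i\theta}z_2)$ in $\SO(4)$ is nullhomotopic, and I would produce the nullisotopy explicitly from a nullhomotopy of that loop, keeping it supported in $U$ so that $\epsilon$ and the $e_i$ stay fixed throughout. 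The crucial point—and the reason for choosing the isoclinic rotation rather than a single-coordinate one—is that this loop is trivial in $\pi_1(\SO(4))\cong\mathbb Z_2$: in the maximal torus $\{R^{(12)}_\alpha R^{(34)}_\gamma\}$ it is the class $(1,1)$, and since each single-plane full rotation is the nontrivial generator, $(1,1)\mapsto 0$. (A single-plane rotation would instead give the nontrivial class, forcing one to worry that $[g]$ has order $2n$ rather than $n$.) With $g^n\simeq\id$ over $\Phi$ established, $\phi$ is a homomorphism satisfying (P2), which proves the lemma.
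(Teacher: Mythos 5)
Your plan is genuinely different from the paper's (which builds $A$ as a tree of $n+1$ product pieces $B^2\times[0,1]\times[0,1]$ rather than one round ball), and your $\pi_1$ computation is correct: the full isoclinic loop lifts to the closed path $(e^{i\theta},1)$ in the double cover $S^3\times S^3$ of $\SO(4)$, hence is nullhomotopic, while a single-plane full rotation is the nontrivial class. But the central step fails as written, in two places. First, the formula $g^n(z)=(e^{in\beta(z)}z_1,e^{in\beta(z)}z_2)$ presupposes $\beta(g(z))=\beta(z)$: writing $\rho(\theta)$ for the isoclinic rotation, one has $g^2(z)=\rho\bigl(\beta(g(z))+\beta(z)\bigr)z$, so you need $\beta$ invariant under the isoclinic circle (indeed, even $g$ itself is only evidently injective when it preserves the level sets of $\beta$). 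That invariance is incompatible with ``$\beta\equiv0$ on $\epsilon(B^3)$ and $\beta\equiv2\pi/n$ outside a neighborhood $U$ of $q$,'' because the isoclinic orbit through $q$ contains every $R^kq$; you would have to damp along an entire invariant neighborhood of the Hopf-type circle through $q$. That repair is harmless for the marked balls, but it changes the geometry decisively: the level sets $\{\beta=c\}$ are then properly embedded solid tori (frontiers of invariant neighborhoods of a circle lying in $\partial B^4$), not $3$-balls and not radial $3$-spheres.

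The serious gap is converting the nullhomotopy in $\SO(4)$ into a diffeotopy of $g^n$ to the identity over $\Phi$. The natural candidate $H_t(z):=\Gamma_t(\beta(z))\cdot z$, for a contraction $\Gamma_t$ of the loop, need not consist of diffeomorphisms: maps of the form $z\mapsto\Gamma(\beta(z))z$ are injective essentially only when the rotations in the image of $\Gamma$ preserve the levels of $\beta$ (already in dimension $2$, $z\mapsto e^{i\alpha(\mathrm{Re}\,z)}z$ with $\alpha$ sweeping from $0$ to $\pi$ across a strip identifies points on different levels). And any contraction must leave the level-preserving subgroup: within the isoclinic circle the loop $\beta\mapsto\rho(n\beta)$ has winding number one, so it cannot be contracted there; and restricted to a solid-torus level $V\cong S^1\times B^2$ the loop is the full rotation along the core, which is nontrivial in $\pi_1(\Diff(V))$ since the evaluation map sends it to the core circle, a generator of $\pi_1(V)$. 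So triviality of the loop in $\pi_1(\SO(4))$ does not by itself yield an isotopy supported in $U$ fixing $\epsilon$ and the $e_i$; killing this boundary-touching twist is exactly the delicate point, and no mechanism is supplied. The paper avoids it entirely: in its product blocks all twists are collar twists $(x,s,t)\mapsto(e^{2\pi iks/n}x,s,t)$, which visibly compose, and the residual $n$-th power $\phi'(n)$ is a union of Dehn twists along $n+1$ disjoint $3$-balls $B^2\times\{1/2\}\times I$ that can be pushed disjointly into $\partial A$ avoiding the marked balls, so it dies in $\mathcal D(\Phi)$ by \cite[Lemma 3.1]{auckly2017} --- that boundary-pushing lemma concerns $3$-balls and gives you nothing for your solid-torus levels. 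To complete your route you would need either an analogue of that lemma for twists along boundary-parallel solid tori, or a damping that localizes the residual twist in a product collar where the $\SO(4)$ contraction literally applies; as it stands, the claim $[g]^n=1$ in $\mathcal D(\Phi)$ is unproved.
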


\begin{proof}
  We first define a block $\Psi=(A';e_1',\dots,e_n';\epsilon')$.
  Let $A'$ be $B^2\times [0,1]\times [0,1]$,
  let $\epsilon'$ be a ball whose center is $(0,0,1/2)$,
  and let $e_k'$ be one whose center is $(\frac12e^{2\pi ik/n},1,\frac12)$ for $1\le k\le n$. 
  We define $f\in\Diff(\Psi)$ by $f(x,s,t):=(e^{2\pi is/n}x,s,t)$,
  which is the $\frac {2\pi s}n$-rotation of $B^2\times\{s\}\times\{t\}$ for $(s,t)\in[0,1]\times[0,1]$.

  Next, we connect $\Psi$ with $n$ copies $\Psi_1,\dots,\Psi_n$ of $\Psi$.
  We glue $e_i'$ in $\Psi$ and $\epsilon'$ in $\Psi_i$ for $i\in\finset{n}$.
  The resulting manifold is denoted by $A$.
  Let $e_{i,j}$ be $e_j$ in $\Psi_i$ and $\epsilon$ be $\epsilon'$ in $\Psi$.
  We combine them to get the block $\Phi:=(A;\dots e_{i,j}\dots;\epsilon)$.
  See Figure \ref{fig:zn_case}.

  \begin{figure}
  \centering
  \includegraphics[width=7cm]{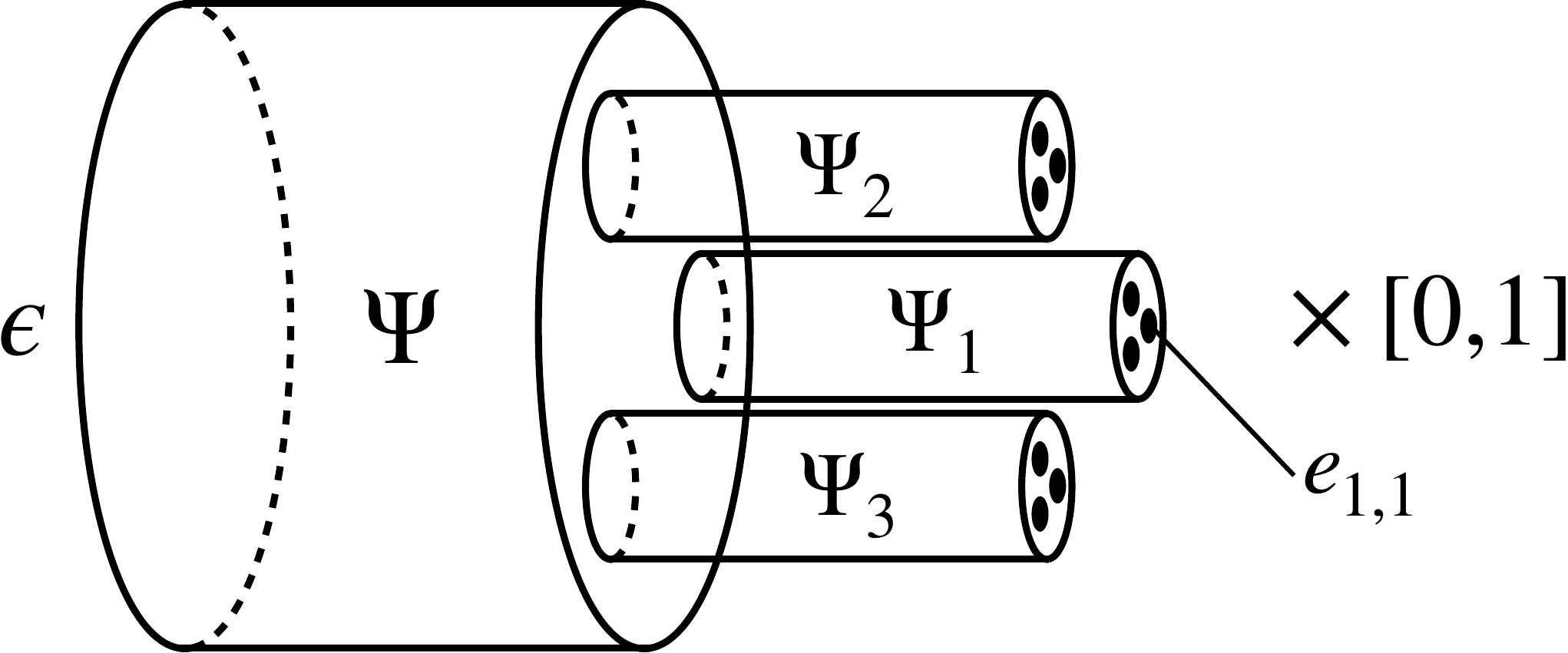}
  \caption{The block $\Phi$ in the case of $n=3$.}
  \label{fig:zn_case}
  \end{figure}
  
  For any $1\le i\le n$, let $f_i\in\Diff(\Phi)$ be the trivial extension of $f$ on $\Psi_i$,
  and let $f_0$ be the extension of $f$ on $\Psi$ defined to be the cyclic permutation of $\Psi_i$ on the rest.
  We define a homomorphism
  $\phi'':\mathbb Z\rightarrow\Diff(\Phi)$ by $\phi''(k):=f_1^kf_2^k\dots f_n^kf_0^k$.
  Let $\phi':\mathbb Z\rightarrow\mathcal D(\Phi)$ be the composition of $\phi''$ and the quotient map.

  Since $\phi'(n)$ is the Dehn twists along each $3$-ball $B^2\times\{1/2\}\times I$ in
  $\Psi$, $\Psi_1$, $\Psi_2$, $\dots$, $\Psi_n$
  and these $3$-balls can be pushed into the boundary $\partial A$ disjointly,
  we have $\phi'(n)=1$ in $\mathcal D(\Phi)$ by \cite[Lemma 3.1]{auckly2017}.
  Thus $\phi'$ factors through the quotient map $\mathbb Z\rightarrow\mathbb Z_n$ and
  we get $\phi:\mathbb Z_n\rightarrow\mathcal D(\Phi)$.
  It is easy to see that this satisfy (P2).
\end{proof}

\begin{lem}
  \label{p1_to_p2}
  If there exist a block $\Phi=(A;e_1,\dots,e_n;\epsilon)$
  and a homomorphism $\phi:G\rightarrow\mathcal D(\Phi)$ with (P1),
  there exist another block $\Psi=(A';e_1',\dots,e_m';\epsilon')$ and
  another homomorphism $\psi:G\rightarrow\mathcal D(\Psi)$ with (P2).
\end{lem}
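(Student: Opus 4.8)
The plan is to reduce the statement to a purely combinatorial observation about the permutation action and then realize the required action by iterating the gluing construction already used in the proof of Lemma \ref{wreath_lemma}. Property (P1) says exactly that the action of $G$ on $\finset{n}$ through $\widehat\phi$ is faithful, i.e. $\bigcap_{i=1}^{n}\mathrm{Stab}_G(i)=\ker\widehat\phi=\{1\}$. Consider the diagonal action of $G$ on the product $\finset{n}^n$, where $g$ sends $(a_1,\dots,a_n)$ to $(\widehat\phi(g)a_1,\dots,\widehat\phi(g)a_n)$. The stabilizer of the tuple $(1,2,\dots,n)$ is $\bigcap_{i=1}^{n}\mathrm{Stab}_G(i)=\{1\}$, so its orbit consists of $|G|$ pairwise distinct tuples. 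Hence it suffices to build a block whose balls are indexed by $\finset{n}^n$ and whose associated homomorphism induces this diagonal permutation action; taking $p$ to be the ball labelled $(1,2,\dots,n)$ will then immediately give (P2).

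To realize the diagonal action I would glue copies of $\Phi$ together in the same manner as in Lemma \ref{wreath_lemma}. Starting from $\Phi$, whose balls are indexed by $\finset{n}$, attach to each ball $e_i$ a fresh copy $\Phi_i$ of $\Phi$ by gluing the special ball $\epsilon$ of $\Phi_i$ to $e_i$; the balls of the enlarged block are then indexed by pairs $(i,j)\in\finset{n}\times\finset{n}$, where $(i,j)$ denotes the ball $e_j$ of the copy $\Phi_i$. I would define the enlarged homomorphism to act by $\phi(g)$ on the root $\Phi$ and to carry the copy $\Phi_i$ onto the copy $\Phi_{\widehat\phi(g)i}$ via $\phi(g)$; on balls this is precisely $(i,j)\mapsto(\widehat\phi(g)i,\widehat\phi(g)j)$, the diagonal action on $\finset{n}^2$. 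Iterating this attachment—at each stage gluing a fresh copy of $\Phi$ onto every current ball—$n-1$ times produces a block $\Psi$ whose balls are indexed by $\finset{n}^n$ together with a homomorphism $\psi\colon G\rightarrow\mathcal D(\Psi)$ inducing the full diagonal action there. The root's distinguished ball $\epsilon$ is fixed throughout, so I would take it as $\epsilon'$, and $\bar A$ is again diffeomorphic to $B^4$ since it is assembled from copies of $B^4$ glued along boundary balls.

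That $\psi$ is a homomorphism follows by the same reasoning as in the proof of Lemma \ref{wreath_lemma}: because $\phi(g)\phi(g')=\phi(gg')$ in $\mathcal D(\Phi)$, composing the copy-permutations together with the within-copy maps respects the group law, and the connecting diffeotopies assemble accordingly. Finally, for $p=(1,2,\dots,n)$ one has $\widehat\psi(g)p=(\widehat\phi(g)1,\dots,\widehat\phi(g)n)$, and if $\widehat\psi(g)p=\widehat\psi(g')p$ then $\widehat\phi(g)=\widehat\phi(g')$, whence $g=g'$ by (P1); thus the $\widehat\psi(g)p$ are pairwise distinct and (P2) holds. The only genuinely delicate point—and the one I expect to be the main obstacle—is checking that the successive gluings are compatible with the diffeomorphisms. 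Since each copy is attached along its special ball $\epsilon$, which every block diffeomorphism of $\Phi$ fixes, the map $\phi(g)$ on a copy matches the motion $e_i\mapsto e_{\widehat\phi(g)i}$ of its attaching ball in the base; this is exactly what guarantees that the pieces fit together into a well-defined element of $\Diff(\Psi)$ and that the whole construction descends coherently to $\mathcal D(\Psi)$.
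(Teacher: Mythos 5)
Your proposal is correct and follows essentially the same route as the paper: both build $\Psi$ by attaching fresh copies of $\Phi$ to every ball through $n-1$ iterations (a tree of $1+n+\dots+n^{n-1}$ copies with leaf balls indexed by $\finset{n}^n$), realize the diagonal action $\widehat\psi(g)(i_1,\dots,i_n)=(\widehat\phi(g)i_1,\dots,\widehat\phi(g)i_n)$, and take $p=(1,2,\dots,n)$, whose $G$-orbit is free precisely because (P1) makes $\widehat\phi$ injective. The only cosmetic difference is that the paper writes $\psi(g)$ as an order-independent product of per-copy extensions $\phi(i_1,\dots,i_r)(g)$, whereas you define the same glued diffeomorphism in one step, with the compatibility along the gluing balls (each copy attached along its $\epsilon$, which block diffeomorphisms fix) checked exactly as you indicate.
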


\begin{proof}
  We combine $1+n+n^2+\dots+n^{n-1}$ copies of $\Phi$ to obtain $\Psi$.
  First, let $\Phi(1),\dots,\Phi(n)$ be $n$ copies of $\Phi$.
  (We use the notation $\Phi(i)$ to indicate an index rather than $\Phi_i$
  since indices will become complicated.)
  We glue $e_i$ in $\Phi$ and $\epsilon$ in $\Phi(i)$ for all $1\le i\le n$.
  Next, for $1\le i,j\le n$, let $\Phi(i,j)$ be a copy of $\Phi$.
  Again we glue $e_j$ in $\Phi(i)$ and $\epsilon$ in $\Phi(i,j)$ for all $1\le i,j\le n$.
  Repeat this process $n-1$ times.
  Let $A'$ be the resulting manifold,
  let $e'(i_1,\dots,i_n)$ be $e_{i_n}$ in $\Phi(i_1,\dots,i_{n-1})$ for $(i_1,\dots,i_n)\in\finset{n}^n$,
  and let $\epsilon'$ be $\epsilon$ in $\Phi$.
  We define a block $\Psi:=(A';\dots e'(i_1,\dots,i_n)\dots;\epsilon')$.
  See Figure \ref{fig:p1_to_p2}.

  \begin{figure}
  \centering
  \includegraphics[width=8cm]{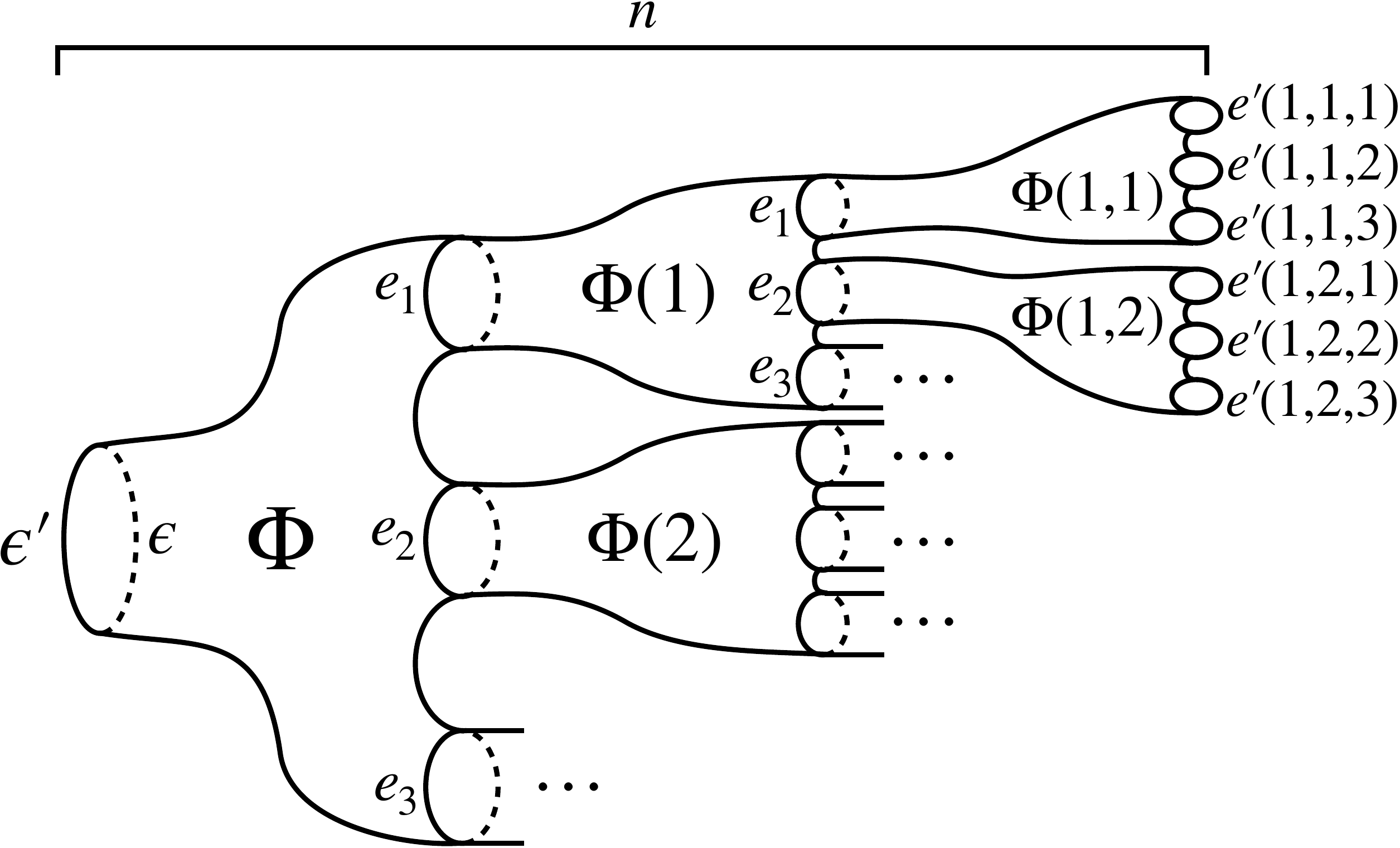}
  \caption{The block $\Psi$ in the case of $n=3$.}
  \label{fig:p1_to_p2}
  \end{figure}

  For any $0\le r\le n-1$ and any $1\le i_1,i_2,\dots,i_r\le n$,
  we extend $\phi:G\rightarrow\mathcal D(\Phi(i_1,\dots,i_r))$
  to $\phi(i_1,\dots,i_r):G\rightarrow\mathcal D(\Psi)$
  which is the permutation of the components joined to $e_1,\dots,e_n$ by $\widehat\phi$ and
  the identity on the one joined to $\epsilon$. 
  We define a homomorphism $\psi:G\rightarrow\mathcal D(\Psi)$ by
  \[
  \psi(g):=\prod_{0\le r\le n-1}\left(\prod_{1\le i_1,i_2,\dots,i_r\le n}\psi(i_1,i_2,\dots,i_r)(g)\right).
  \]
  Note that $\psi$ does not depend on the order of multiplication.

  Since we have $\widehat\psi(g)(1,2,\dots,n)=(\widehat\phi(g)1,\widehat\phi(g)2,\dots,\widehat\phi(g)n)$
  and $\phi$ is (P1), $\psi$ is (P2) 
\end{proof}

\begin{proof}[Proof of Theorem \ref{thm:wreath:weak} \ref{thm:wreath:weak:weak} and \ref{thm:wreath:weak:Stein}]
  Let $m\ge1$ and $n_1,\dots,n_r\ge2$ be integers.
  By Lemma \ref{zn_case}, there exist a block $\Phi^{(1)}_i$
  and a (P2) homomorphism $\phi^{(1)}_i:\mathbb Z_{n_i}\rightarrow\mathcal D(\Phi^{(1)}_i)$.
  Applying Lemma \ref{wreath_lemma} repeatedly, we get a block $\Phi^{(2)}$ and
  a (P1) homomorphism $\phi^{(2)}:\mathbb Z_{n_1}\wr\dots\wr\mathbb Z_{n_r}\rightarrow\mathcal D(\Phi^{(2)})$.
  Lemma \ref{p1_to_p2} implies that there exist a block $\Phi^{(3)}$
  and a (P2) homomorphism $\phi^{(3)}:\mathbb Z_{n_1}\wr\dots\wr\mathbb Z_{n_r}\rightarrow\mathcal D(\Phi^{(3)})$.
  We get a weakly equivariant $\mathbb Z^m\wr(\mathbb Z_{n_1}\wr\dots\wr\mathbb Z_{n_r})$-cork
  and a weakly equivariant Stein $\mathbb Z_{n_1}\wr\dots\wr\mathbb Z_{n_r}$-cork by Lemma \ref{weak_cork_lem}.
\end{proof}

\begin{que}
  Does there exists a weakly equivariant (Stein) $G$-cork for any finite (or even countable) group $G$?
\end{que}

\begin{que}
  Are Theorem \ref{thm:main} \ref{thm:main:weak} and \ref{thm:main:weakStein} true
  if we assume a weakly equivariant $G$-cork to be boundary sum irreducible?
\end{que}

\section{Applications to diffeotopy groups of exotic $\mathbb R^4$'s}
\label{sec:diff}
Gompf recently applied cork theory and constructed exotic $\mathbb R^4$'s
with large diffeotopy groups in \cite{gompf2018}.
We can also use the wreath product approach in this context and get the following theorem.
Hereafter, we define $\mathbb Z_0:=\mathbb Z$.
\begin{thm}
  \label{thm:app:wr}
  There exists an exotic $\mathbb R^4$ $R$ such that there exists an embedding
  $\mathbb Z_{n_1}\wr\dots\wr\mathbb Z_{n_r}\rightarrow\mathcal D(R)$
  for any $n_1,\dots,n_r\in\mathbb N\cup\{0\}$.
\end{thm}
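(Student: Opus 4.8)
The plan is to push the block-and-wreath-product machinery of Sections~\ref{sec:cork}--\ref{sec:weak_corks} through Gompf's open analogue of the cork trick from \cite{gompf2018}. Gompf produces an exotic $\mathbb R^4$ $R$ whose diffeotopy group $\mathcal D(R)$ contains his group $\gsw$, and the properties (P1) and (P2) were set up precisely so that blocks map into this group (they are modeled, as remarked in Section~\ref{sec:weak_corks}, on the group $\mathcal G^*$ of \cite[Section~4]{gompf2018}). So the first step is to record the open analogue of Lemma~\ref{weak_cork_lem}: a block $\Phi$ together with a homomorphism $\phi\colon G\to\mathcal D(\Phi)$ satisfying (P1) determines an injection $G\hookrightarrow\gsw\le\mathcal D(R)$. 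Here the effectiveness input of Theorems~\ref{emb_of_2_cork} and~\ref{emb_of_z_cork} is replaced by Gompf's smooth invariants, which detect twisted embedded exotic pieces in place of nondiffeomorphic cork twists.

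Granting this bridge, the argument runs parallel to the proof of Theorem~\ref{thm:wreath:weak}. For each finite cyclic factor $\mathbb Z_{n_i}$ with $n_i\ge2$ I take the block and (P2) homomorphism of Lemma~\ref{zn_case} (recalling that (P2) implies (P1)), and iterate Lemma~\ref{wreath_lemma} across the factors to obtain a single block carrying a (P1) homomorphism $\mathbb Z_{n_1}\wr\dots\wr\mathbb Z_{n_r}\to\mathcal D(\Omega)$; composing with the bridge injects this wreath product into $\gsw$. The genuinely new ingredient is the infinite cyclic factor $\mathbb Z_0=\mathbb Z$: no homomorphism $\mathbb Z\to\mathfrak S_n$ into a finite symmetric group is injective, so a finite block cannot carry a (P1) datum for $\mathbb Z$, and $\mathbb Z_0$ has no counterpart in the compact cork setting. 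Here I exploit the noncompactness of $\mathbb R^4$ by enlarging the block formalism to allow countably many embedded $3$-balls $e_1,e_2,\dots$ and letting the generator of $\mathbb Z$ act as the shift; then $\widehat\phi\colon\mathbb Z\to\mathfrak S_\infty$ is injective and (P1) holds. Such infinite blocks are exactly the kind of data that Gompf's construction, built from infinitely many pieces, can absorb.

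Because $\gsw$ is built from Gompf's universal arrangement of countably many pieces, every one of these (finite or countable) blocks embeds into the same $\gsw$, and hence into the diffeotopy group of a single $R$; this yields one exotic $\mathbb R^4$ valid for all tuples $(n_1,\dots,n_r)$ simultaneously, as required. (Alternatively, one may end-sum the countably many exotic $\mathbb R^4$'s obtained tuple by tuple, using that the relevant model diffeomorphisms are compactly supported and so extend by the identity over the end-sum.)

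The main obstacle is injectivity in $\mathcal D(R)$: one must check that distinct elements of $\mathbb Z_{n_1}\wr\dots\wr\mathbb Z_{n_r}$ give nondiffeotopic diffeomorphisms of $R$. In the compact case this followed from the finite bookkeeping of twist parameters in Theorems~\ref{emb_of_2_cork} and~\ref{emb_of_z_cork}; in the open case it rests entirely on Gompf's invariants underlying $\gsw$. The delicate points are that the shift realizing a $\mathbb Z_0$ factor acts with genuinely infinite order on $\mathcal D(R)$, that the infinite-block formalism maps faithfully into $\gsw$, and that assembling countably many pieces (or end-summing) introduces no relations collapsing the image. Verifying these detection statements, rather than any fresh topology, is where the real work lies, and is exactly the content imported from \cite{gompf2018}.
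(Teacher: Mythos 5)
Your route is essentially the paper's own: the paper formalizes your ``enlarged block formalism'' as \emph{open blocks} $(A;\{e_\lambda\}_{\lambda\in\Lambda};\epsilon)$ with $\Lambda$ allowed to be countably infinite, proves your ``bridge'' as Theorem \ref{gompf_main} (a single exotic $\mathbb R^4$ whose diffeotopy group contains every group in $\gsw(1)$, proved verbatim as Gompf's Theorem 4.4, exactly as you anticipate), realizes $\mathbb Z_0=\mathbb Z$ by precisely your shift idea (Lemma \ref{open_zn_case}: $g(x,y,z,w)=(x,f(x)+y,z,w)$ on $\mathbb R^4$ with ends $e_i$ along $[0,\infty)\times\{i\}\times\{0\}\times\{0\}$), and iterates the open analogue of Lemma \ref{wreath_lemma} (Lemma \ref{open_wreath_lemma}). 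One point of bookkeeping: $\gsw$ in the paper is a class of groups, not a subgroup of $\mathcal D(R)$, but this does not affect the simultaneity over all tuples $(n_1,\dots,n_r)$, which comes from Theorem \ref{gompf_main} just as you say. You should also note explicitly, as the paper does, that the disjoint union of countably many ends $e_\lambda$ and $\epsilon$ need not be a proper embedding, and that the end-summing step is justified by Gompf's treatment of this non-properness.

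There are two genuine slips, one of which must be fixed. First, for the $\mathbb Z$-shift you record only (P1), but in the iterated wreath product $(\cdots(\mathbb Z_{n_1}\wr\mathbb Z_{n_2})\cdots)\wr\mathbb Z_{n_r}$ every factor $\mathbb Z_{n_i}$ with $i\ge2$ occupies the \emph{acting} position of the wreath lemma, whose hypothesis on the acting group is (P2), not (P1); so as written your argument fails whenever some $n_i=0$ with $i\ge2$ (e.g.\ $\mathbb Z_2\wr\mathbb Z$). Fortunately your own construction already satisfies (P2): fixing any end $p$, the orbit $\widehat\phi(k)p=p+k$ is free, which is exactly what the paper records in Lemma \ref{open_zn_case}; you need only upgrade your claim. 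Second, your parenthetical alternative --- end-summing the exotic $\mathbb R^4$'s obtained tuple by tuple and extending the diffeomorphisms by the identity --- is unsound: it rests on the realizing diffeomorphisms being compactly supported, but the shift manifestly has noncompact support, and nothing makes its transplant into $R$ compactly supported. Drop that alternative; your main route (a single $R$ via the open-block class and Theorem \ref{gompf_main}) is the correct one and is what the paper does.
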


\begin{rem}
  \label{rem_of_d}
  \begin{tightenum}
    \renewcommand{\theenumi}{(\alph{enumi})}
    \renewcommand{\labelenumi}{\theenumi}
  \item
    \label{gompf_said}
    Gompf already mentioned the key point of the proof of the theorem in \cite[the last sentence of Example 4.3 (a)]{gompf2018}.
    The author specified the groups in it
    and used weak actions
    \footnote{A weak action indicates a homomorphism into the diffeotopy group of a manifold.}
    in \cite[Theorem 4.4]{gompf2018}.
  \item
    Since the resulting manifold of the proof of the theorem is same as that of \cite[Theorem 4.4]{gompf2018},
    we can assume that $R$ meets same conditions as \cite[Theorem 4.4]{gompf2018}.
  \end{tightenum}
\end{rem}

We see subgroups of $\mathbb Z_{n_1}\wr\dots\wr\mathbb Z_{n_r}$.

\begin{defn}[{\cite[p.122]{suzuki}}]
  \label{def_of_pc}
  A group $G$ is said to be poly-cyclic if there exists a subnormal series
  $G=G_0\triangleright G_1\triangleright\dots\triangleright G_r={1}$
  such that each quotient group $G_{i-1}/G_i$ is a (finite or infinite) cyclic group.
\end{defn}

\noindent A finite solvable group is poly-cyclic.
We have the following theorem in the same way as Corollary \ref{kk_cor_fs}.

\begin{cor}
  \label{kk_cor_pc}
  For any poly-cyclic group $G$, there exist natural numbers $p_1,\dots,p_r$ which are prime or $0$
  such that $G$ can be embedded into $\mathbb Z_{p_1}\wr\dots\wr\mathbb Z_{p_r}$.
\end{cor}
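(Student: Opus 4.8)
The plan is to mimic the proof of Corollary \ref{kk_cor_fs} verbatim, the only new ingredient being the treatment of the infinite cyclic quotients, which are absorbed by the convention $\mathbb Z_0=\mathbb Z$. By Definition \ref{def_of_pc}, a poly-cyclic group $G$ admits a subnormal series $G=G_0\triangleright G_1\triangleright\dots\triangleright G_s=1$ whose successive quotients $G_{i-1}/G_i$ are cyclic, each either infinite or finite of some order $n_i$.

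First I would refine this series so that every quotient is cyclic of prime order or infinite cyclic. For each index $i$ with $G_{i-1}/G_i\cong\mathbb Z_{n_i}$ finite, factor $n_i=q_1q_2\cdots q_k$ into (not necessarily distinct) primes and insert $G_i=K_0\triangleleft K_1\triangleleft\dots\triangleleft K_k=G_{i-1}$, the preimages of the chain of subgroups of $\mathbb Z_{n_i}$ of orders $1,q_1,q_1q_2,\dots,n_i$. Since $G_{i-1}/G_i$ is abelian, each $K_j/G_i$ is normal in it, so every $K_{j-1}$ is normal in $K_j$ and the successive quotients are $\mathbb Z_{q_1},\dots,\mathbb Z_{q_k}$. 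Leaving the infinite cyclic quotients untouched, this produces a refined subnormal series $G=L_0\triangleright L_1\triangleright\dots\triangleright L_r=1$ all of whose quotients are cyclic, each of prime order or infinite.

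Now I would apply Corollary \ref{kk_cor} to this refined series. Writing the quotient $L_{j-1}/L_j$ as $\mathbb Z_{p_j}$, where $p_j$ is a prime when the quotient is finite and $p_j=0$ (so that $\mathbb Z_{p_j}=\mathbb Z$) when it is infinite, the corollary embeds $G$ into the iterated wreath product $\mathbb Z_{p_r}\wr\dots\wr\mathbb Z_{p_1}$. After relabelling the indices this is exactly $\mathbb Z_{p_1}\wr\dots\wr\mathbb Z_{p_r}$ with each $p_i$ prime or $0$, as claimed.

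No serious obstacle arises; the only point requiring a moment's care is verifying that inserting the $K_j$ yields a genuine subnormal refinement, which is immediate because subgroups of the abelian group $G_{i-1}/G_i$ are automatically normal in it. Thus the argument is formally identical to that of Corollary \ref{kk_cor_fs}, with the single extension that infinite cyclic quotients are recorded via the index $0$.
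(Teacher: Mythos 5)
Your proposal is correct and matches the paper's argument, which simply invokes the same reasoning as Corollary \ref{kk_cor_fs}: refine the poly-cyclic series so every quotient is prime cyclic or infinite cyclic (the latter recorded as $\mathbb Z_0=\mathbb Z$), then apply Corollary \ref{kk_cor}. You have merely spelled out the refinement and normality checks that the paper leaves implicit, and the indexing/relabelling point you note is harmless since the corollary only asserts existence of the $p_i$.
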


\noindent Thus, $\mathcal D(R)$ in Theorem \ref{thm:app:wr} contains any poly-cyclic group
and we get Theorem \ref{thm:app:pc}.

The proof of Theorem \ref{thm:app:wr} is very similar to that of \cite[Theorem 4.4]{gompf2018}.
While Gompf mainly considered group actions,
we use weak group actions.

First, we consider an open analog of a block in Section \ref{sec:weak_corks}.
An \textit{open block} $\Phi=(A;$ $\{e_\lambda\}_{\lambda\in\Lambda};$ $\epsilon)$ is the tuple of an open manifold $A$
diffeomorphic to $\mathbb R^4$ and disjoint embeddings
$e_{\lambda},\epsilon:[0,\infty)\times B^3\rightarrow A$ each of which is proper.
Since we consider open manifolds now, $\Lambda$ can be countably infinite.
Note that we do not assume that the disjoint union of $e_\lambda$ for $\lambda\in\Lambda$ and $\epsilon$ is proper,
i.e., the embedding
\begin{equation}
\label{du_of_e}
\bigsqcup_{\lambda\in\Lambda}e_\lambda\sqcup\epsilon:\bigsqcup_{\lambda\in\Lambda}
\left([0,\infty)\times B^3\right)\sqcup\left([0,\infty)\times B^3\right)\rightarrow A
\end{equation}
can be non-proper.    

We define $\Diff(\Phi)$, $\mathcal D(\Phi)$, \ $\widehat\cdot :\mathcal D(\Phi)\rightarrow \mathfrak S(\Lambda)$,
and the (P1) and (P2) properties in the same way,
except we use the symmetric group $\mathfrak S(\Lambda)$ on the set $\Lambda$ instead of $\mathfrak S_n$.
To be more precise, a diffeomorphism $f$ of $A$ is in $\Diff(\Phi)$
if and only if we have $f\circ\epsilon=\epsilon$ and there exists $\sigma\in\mathfrak S(\Lambda)$ with
$f\circ e_\lambda=e_{\sigma(\lambda)}$ for all $\lambda\in\Lambda$,
$\mathcal D(\Phi)$ is the quotient group of $\Diff(\Phi)$ by diffeotopies through it,
and we define $\widehat{[f]}:=\sigma$.

\begin{defn}
  We define $\gsw(i)$ to be the set of isomorphic classes of groups
  each of which has a (P$i$) homomorphism into $\mathcal D(\Phi)$ for some open block $\Phi$,
  where $i=1,2$.
\end{defn}
    
\noindent The set $\gsw(i)$ is an analog of $\mathcal G^*$ in \cite{gompf2018} with weak actions.
Note that the (P2) condition was used in \cite[Theorem 4.6]{gompf2018}.

\begin{thm}
  \label{gompf_main}
  There exists an exotic $\mathbb R^4$ R whose diffeotopy group $\mathcal D(R)$ contains
  any group in $\gsw(1)$.
\end{thm}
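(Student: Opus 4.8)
The plan is to follow the construction in \cite[Theorem 4.4]{gompf2018} closely, replacing honest group actions by weak actions throughout. Fix a group $G\in\gsw(1)$, so by definition there is an open block $\Phi=(A;\{e_\lambda\}_{\lambda\in\Lambda};\epsilon)$ with $A\cong\mathbb R^4$ together with a (P1) homomorphism $\phi\colon G\to\mathcal D(\Phi)$. First I would build an exotic $\mathbb R^4$ from $A$ by end-summing a fixed summable exotic piece $W$ along each proper ray $e_\lambda$, while using the distinguished ray $\epsilon$ to pin the construction to the standard structure near the remaining end. Since $A\cong\mathbb R^4$ and end-summing does not change the topological type, the resulting manifold $R$ is homeomorphic to $\mathbb R^4$; choosing $W$ so that the sum is genuinely exotic makes $R$ an exotic $\mathbb R^4$.

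Next I would realize $\phi$ inside $\mathcal D(R)$. For each class in $\mathcal D(\Phi)$ I pick a representative $f\in\Diff(\Phi)$; by definition $f\circ\epsilon=\epsilon$ and $f$ permutes the rays $e_\lambda$ via $\widehat f$. Because the \emph{same} piece $W$ is summed along every ray, $f$ extends across the summed regions to a diffeomorphism $\tilde f$ of $R$, and diffeotopic representatives in $\Diff(\Phi)$ extend to diffeotopic diffeomorphisms of $R$; hence the assignment descends to a homomorphism $\mathcal D(\Phi)\to\mathcal D(R)$. Composing with $\phi$ yields $\Theta\colon G\to\mathcal D(R)$. The (P1) hypothesis is precisely what I would use for injectivity: the end-sum decomposition of $R$ along its rays is preserved up to diffeotopy, so the induced permutation $\widehat{\Theta(g)}=\widehat\phi(g)$ of the rays is an invariant of the diffeotopy class; if $g\neq1$ then $\widehat\phi(g)\neq\id$, whence $\Theta(g)\neq1$ in $\mathcal D(R)$.

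The main obstacle is producing a single exotic $\mathbb R^4$ $R$ that works simultaneously for every group in $\gsw(1)$, rather than one $R$ per group, and doing so without destroying detectability. As in Gompf's argument, I would arrange the summing piece and the ambient manifold so that a universal exotic $\mathbb R^4$ absorbs all the block constructions at once, so that every block embeds compatibly into one fixed $R$, while keeping the ray permutations faithful on the image of $\widehat\phi$ so that $\Theta$ stays injective. Balancing universality (to obtain one $R$) against detectability (to keep $\Theta$ injective) is the delicate point, but it is exactly the mechanism already present in \cite[Theorem 4.4]{gompf2018}; the only new ingredient here is that we extend \emph{chosen representatives} of diffeotopy classes and verify independence of the choice, which is routine once the weak-action bookkeeping of $\mathcal D(\Phi)$ is in place.
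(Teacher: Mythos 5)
Your scaffolding follows the intended route --- the paper's own proof of this theorem is a one-line deferral to \cite[Theorem 4.4]{gompf2018}, run with weak actions --- but there is a genuine gap at the one step that carries all the content: injectivity of $\Theta$. You justify it by asserting that ``the end-sum decomposition of $R$ along its rays is preserved up to diffeotopy,'' so that $\widehat{\Theta(g)}=\widehat\phi(g)$ is an invariant of the diffeotopy class. No such general principle holds, and as stated it is false: take $W$ to be the standard $\mathbb R^4$. Then $R\cong\mathbb R^4$ with its standard structure, and every orientation-preserving diffeomorphism $f$ of $\mathbb R^4$ is smoothly isotopic to the identity (translate $f(0)$ to $0$, linearize via $x\mapsto f(tx)/t$ as $t\to 0$, and connect the derivative to the identity in the identity component of $\mathrm{GL}(4,\mathbb R)$), so every nontrivial ray permutation dies in $\mathcal D(R)$ even though it permutes the summands. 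Since you deliberately sum the \emph{same} piece $W$ along every ray --- which you need so that block diffeomorphisms extend --- nothing intrinsic to $R$ labels the summands, and ``which ray goes where'' is simply not well defined on diffeotopy classes without further input. Detecting that $\Theta(g)$ is nontrivial in $\mathcal D(R)$ is exactly the hard part of the theorem, and your proposal replaces it with an invariance claim that the standard model refutes.

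The missing ingredient is the detection mechanism on which Gompf's proof (and, at the compact level, Lemma \ref{weak_cork_lem} of this paper) is built: the summand is not an arbitrary exotic piece but one equipped with \AKMR{}-type effectiveness, i.e., disjoint embeddings of its copies into a fixed \emph{closed} $4$-manifold $X$ such that modifying distinct subsets of the copies (with a symmetry-breaking modification at one distinguished position, the open analogue of the shrinking trick and the $\delta_{(\cdot,1)}$ shift in the proofs of Theorem \ref{thm:wreath:eqv} and Lemma \ref{weak_cork_lem}) yields pairwise nondiffeomorphic closed manifolds, the open counterpart of Theorems \ref{emb_of_2_cork} and \ref{emb_of_z_cork}. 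If some $\Theta(g)$ with $g\neq 1$ were trivial in $\mathcal D(R)$, one transports the diffeotopy into $X$ and obtains a diffeomorphism between two distinct members of that family, a contradiction; (P1) enters precisely by guaranteeing that $\widehat\phi(g)$ moves some ray, so that the two members really are distinct. Your proposal never introduces the ambient closed manifold or any effectiveness property, so injectivity remains unproven. By contrast, the two points you flag as delicate are comparatively soft in the actual argument: exoticness of $R$ follows from a compact subset of the summand admitting no smooth embedding in the standard $\mathbb R^4$, and universality is essentially automatic because every block construction produces the same manifold, an infinite end sum of copies of the fixed exotic piece (cf. Remark \ref{rem_of_d}), so a single $R$ serves all groups in $\gsw(1)$ at once.
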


\noindent This theorem can be shown in the completely same way as \cite[Theorem 4.4]{gompf2018}.

We then prove an open analog of Lemma \ref{wreath_lemma}.
This is what Gompf already mentioned implicitly (see Remark \ref{rem_of_d} \ref{gompf_said}).
\begin{lem}
  \label{open_wreath_lemma}
  For any $H\in\gsw(1)$ and $G\in\gsw(2)$, we have $H\wr G\in\gsw(1)$.
\end{lem}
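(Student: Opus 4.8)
The plan is to adapt the proof of the closed-manifold Lemma \ref{wreath_lemma} to the open setting, mimicking its gluing construction while handling the new feature that the index set $\Lambda$ may be countably infinite and the disjoint union \eqref{du_of_e} may fail to be proper. Suppose $H\in\gsw(1)$ is witnessed by an open block $\Psi=(A';\{e'_\mu\}_{\mu\in M};\epsilon')$ with a (P1) homomorphism $\psi\colon H\rightarrow\mathcal D(\Psi)$, and $G\in\gsw(2)$ is witnessed by an open block $\Phi=(A;\{e_\lambda\}_{\lambda\in\Lambda};\epsilon)$ with a (P2) homomorphism $\phi\colon G\rightarrow\mathcal D(\Phi)$. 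First I would build the composite open block $\Omega$ by gluing, for each $\lambda\in\Lambda$, a copy $\Psi_\lambda$ of $\Psi$ to $\Phi$ along the identification of $e_\lambda$ with $\epsilon'$ in $\Psi_\lambda$, exactly as in Lemma \ref{wreath_lemma}. The underlying manifold $\bar A$ is obtained by attaching countably many copies of $A'\cong\mathbb R^4$ to $A\cong\mathbb R^4$ along the proper embeddings $e_\lambda$ and $\epsilon'$; since each gluing is along a properly embedded $[0,\infty)\times B^3$, the result is again diffeomorphic to $\mathbb R^4$. The marked embeddings of $\Omega$ are the $\bar e_{\lambda,\mu}$ coming from the $e'_\mu$ in each $\Psi_\lambda$, indexed by $\Lambda\times M$, and $\bar\epsilon:=\epsilon$ in $\Phi$.

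Next I would define the homomorphism $\omega\colon H\wr G\rightarrow\mathcal D(\Omega)$ using Proposition \ref{homom_prop}. Extend $\phi$ to $\mathcal D(\Omega)$ by letting it permute the copies $\Psi_\lambda$ via $\widehat\phi$. For the point $p\in\Lambda$ guaranteed by (P2), and each $g\in G$, set $i:=\widehat\phi(g)p$ and extend the action $\psi\colon H\rightarrow\mathcal D(\Psi_i)$ trivially over the rest of $\Omega$ to obtain $\psi_g\colon H\rightarrow\mathcal D(\Omega)$. As in the closed case, these satisfy the hypotheses of Proposition \ref{homom_prop}: conjugation by $\phi(h)$ carries $\psi_g$ to $\psi_{hg}$ because $\widehat\phi$ permutes the blocks accordingly, and the images of $\psi_g,\psi_{g'}$ for $g\neq g'$ act on disjoint blocks $\Psi_{\widehat\phi(g)p}\neq\Psi_{\widehat\phi(g')p}$ (this is precisely where (P2) is used) and hence commute. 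This yields the homomorphism $\omega$, and I would then verify it is (P1) by the same computation as in Lemma \ref{wreath_lemma}: reading off $\widehat\omega(F,g)$ on the fibre over $p$ recovers both $\widehat\phi(g)$ and, on the sub-fibre $\{p\}\times M$, the permutations $\widehat\psi(F(g))$, so injectivity of $\widehat\psi$ (from (P1) for $\psi$) and distinctness of the blocks force $\widehat\omega(F,g)=\id$ to imply $(F,g)=1$.

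The main obstacle I expect is properness. In the closed setting the blocks are balls and every diffeomorphism is automatic, but here a diffeomorphism of $\Omega$ must be genuinely proper on $\bar A\cong\mathbb R^4$, and the permutation $\sigma\in\mathfrak S(\Lambda\times M)$ induced by $\omega(F,g)$ can now have infinite support. The delicate point is that the permutation action on the infinitely many summands $\Psi_\lambda$ must be realized by an \emph{ambient} self-diffeomorphism of $\mathbb R^4$ that is proper and lies in $\Diff(\Omega)$, even though \eqref{du_of_e} need not be proper. I would handle this exactly as Gompf does in \cite{gompf2018}: the relevant permutations arising from $\widehat\phi$ and $\widehat\psi$ are supported compatibly with the geometry of the gluing, so one can assemble a genuine proper diffeomorphism by interpolating across the necks $[0,\infty)\times B^3$ with appropriate control at infinity. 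Once properness of the assembled diffeomorphisms is confirmed, the remainder of the argument is formally identical to Lemma \ref{wreath_lemma}, with $\mathfrak S_n$ replaced throughout by $\mathfrak S(\Lambda)$ and $\mathfrak S(\Lambda\times M)$, so I would merely remark that the verification of the Proposition \ref{homom_prop} hypotheses and of (P1) goes through verbatim.
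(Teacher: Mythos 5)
Your construction of the open block $\Omega$ and the intended homomorphism agrees with the paper's, but there is one genuine gap, and it sits exactly at the point the paper identifies as the only new content of the open case. You invoke Proposition \ref{homom_prop} verbatim, yet that proposition requires the acting group of the wreath product (the group called $H$ there, which is your $G$) to be \emph{finite}: its defining formula $\omega(F,h)=\bigl(\prod_{g}\psi_g(F(g))\bigr)\phi(h)$ is a finite product over the acting group. In Lemma \ref{open_wreath_lemma} the group $G\in\gsw(2)$ may be infinite --- indeed $\mathbb Z=\mathbb Z_0\in\gsw(2)$ by Lemma \ref{open_zn_case}, and the application to Theorem \ref{thm:app:wr} needs precisely this case --- so the proposition does not apply as stated. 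Worse, since the paper defines $H\wr G$ via the \emph{unrestricted} product $H^G$ (all maps $G\rightarrow H$), an element $F$ may be nontrivial at infinitely many $g$, and no finite (or finitely supported) product of your $\psi_g$'s can produce the required class. The paper's proof flags this (``we need Proposition \ref{homom_prop} with infinite $H$'') and repairs it by bypassing the product altogether: it defines a single class $\chi(F)\in\mathcal D(\Omega)$ to be $\psi(F(g))$ on $\Psi_{\widehat\phi(g)p}$ \emph{simultaneously} for all $g\in G$ and the identity elsewhere, then sets $\omega(F,h):=\chi(F)\phi(h)$ and checks the homomorphism property directly. Your own observation that the supports $\Psi_{\widehat\phi(g)p}$ are pairwise disjoint (via (P2)) is exactly what makes $\chi(F)$ well defined, so the fix is short --- but as written, your argument only covers finite $G$, and your closing claim that the verification ``goes through verbatim'' is false at this step.

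A secondary, smaller point: your worry about properness of the assembled diffeomorphisms is misplaced, since any self-diffeomorphism of a manifold is automatically proper. The genuine analytic subtlety in the open setting is that the rays $e_\lambda$ may accumulate (non-properness of the map (\ref{du_of_e})), so one must check that the end sum $A\natural_{\lambda\in\Lambda}A'$ is well defined and diffeomorphic to $\mathbb R^4$, and that an infinite simultaneous or permuting modification near the ends is smooth at the accumulation locus. Both you and the paper correctly delegate this to the proof of \cite[Theorem 4.4]{gompf2018}, so no further repair is needed there.
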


\begin{proof}
  The proof is same as that of Lemma \ref{wreath_lemma},
  except we need Proposition \ref{homom_prop} with infinite $H$.

  Let $\Phi=(A;\{e_\lambda\}_{\lambda\in\Lambda};\epsilon)$
  and $\Psi=(A';\{e_{\mu}'\}_{\mu\in M};\epsilon')$ be open blocks
  and let $\phi:G\rightarrow\mathcal D(\Phi)$ and $\psi:H\rightarrow\mathcal D(\Psi)$
  be homomorphisms with (P2) and (P1), respectively.
  We connect $\Phi$ with copies $\Psi_\lambda$ of $\Psi$ indexed by $\lambda\in\Lambda$
  as in the proof of Lemma \ref{wreath_lemma},
  using end summing instead of boundary summing.
  The resulting manifold is the end sum $A\natural_{\lambda\in\Lambda}A'$
  of finite or countably infinite copies of $\mathbb R^4$ and thus diffeomorphic to $\mathbb R^4$
  (see \cite[the proof of Theorem 4.4]{gompf2018}
  for how to deal with non-properness of the map (\ref{du_of_e})).
  Let $e_{\lambda,\mu}$ be $e_\mu'$ in $\Psi_\lambda$.
  We get the open block
  $\Omega:=(A\natural_{\lambda\in\Lambda}A';\{e_{\lambda,\mu}\}_{(\lambda,\mu)\in\Lambda\times M};\epsilon)$.

  The homomorphism $\phi:G\rightarrow\mathcal D(\Phi)$ extends to $\phi:G\rightarrow\mathcal D(\Omega)$ naturally.
  Let $p\in\Lambda$ be a point as in (P2).
  We define $\chi(F)\in\mathcal D(\Omega)$ for $F\in H^G$
  to be $\psi(F(g))$ on $\Psi_{\widehat\phi(g)p}$ for all $g\in G$ and the identity on the rest.
  The map $\omega:H\wr G\rightarrow\mathcal D(\Omega)$
  defined by $\omega(F,h):=\chi(F)\phi(h)$ is a homomorphism with (P1).
\end{proof}

Lastly, we prove an open analog of Lemma \ref{zn_case}.

\begin{lem}
  \label{open_zn_case}
  For any $n\ge0$ with $n\neq1$, we have $\mathbb Z_n\in\gsw(2)$.
\end{lem}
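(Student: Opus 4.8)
The plan is to exhibit, for each $n\in\{0\}\cup\{2,3,\dots\}$, an open block $\Phi=(A;\{e_\lambda\};\epsilon)$ with $A\cong\mathbb R^4$ together with a homomorphism $\phi\colon\mathbb Z_n\to\mathcal D(\Phi)$ satisfying (P2). The key observation is that (P2) is a condition on $\widehat\phi$ alone, so it suffices to produce a diffeomorphism $f\in\Diff(\Phi)$ whose induced permutation $\widehat f$ of the index set realizes a cyclic shift of order $n$ (of order $\infty$ when $n=0$) while $f$ fixes the distinguished tentacle $\epsilon$ pointwise; then $\phi(k):=[f^k]$ automatically satisfies (P2). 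I would treat the new case $n=0$ (that is, $\mathbb Z=\mathbb Z_0$) separately from $n\ge2$, the latter being the open counterpart of Lemma \ref{zn_case}.

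For $\mathbb Z$ I would take $A=\mathbb R\times\mathbb R^3=\mathbb R^4$ with coordinates $(t,y_1,y_2,y_3)$ and use a shear. Let $\beta\colon\mathbb R\to\mathbb R$ be smooth with $\beta\equiv1$ on $(-\infty,-1]$ and $\beta\equiv0$ on $[1,\infty)$, and set $f(t,y)=(t,y_1+\beta(t),y_2,y_3)$, a diffeomorphism of $\mathbb R^4$. Place $\epsilon$ as a thin tube running to the $t=+\infty$ end inside $\{t\ge2\}$, where $f=\id$, so that $f\circ\epsilon=\epsilon$; and place $e_\lambda$ ($\lambda\in\mathbb Z$) as thin tubes running to the $t=-\infty$ end centred over $y_1=\lambda$ inside $\{t\le-2\}$, where $f$ is the unit translation in $y_1$, so that $f\circ e_\lambda=e_{\lambda+1}$. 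These embeddings are proper and pairwise disjoint, $\Phi$ is an open block, and $\widehat\phi(k)$ is the shift by $k$; taking $p=0$ gives (P2) since the values $\widehat\phi(k)p=k$ are distinct. No finiteness or isotopy argument is needed, which is exactly what makes the infinite case clean.

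For $n\ge2$ I would imitate Lemma \ref{zn_case} in the open category. On $A=\mathbb C\times\mathbb R\times\mathbb R$ choose $\rho\colon\mathbb R\to[0,1]$ smooth with $\rho\equiv0$ near $s\le0$ and $\rho\equiv1$ near $s\ge1$, and set $f(z,s,u)=(e^{2\pi i\rho(s)/n}z,s,u)$. Putting $\epsilon$ in the region $\{s\le0\}$ (where $f=\id$) and putting $n$ rotationally-placed tentacles $e_0,\dots,e_{n-1}$ in $\{s\ge1\}$, arranged as $e_k=R_{2\pi k/n}\circ e_0$ for the ambient rotation $R_\theta$ of the $\mathbb C$-factor, makes $f$ fix $\epsilon$ pointwise and cyclically permute the $e_k$. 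The difficulty is that $f^n$ is not the identity but the Dehn twist $(z,s,u)\mapsto(e^{2\pi i\rho(s)}z,s,u)$, whose twisting core is parallel to the end of $\mathbb R^4$; I would combine copies of this model exactly as in Lemma \ref{zn_case}, so that the twist becomes supported near disjoint $3$-dimensional pieces that can be pushed to infinity.

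The main obstacle is precisely this last step, namely showing $f^n=1$ in $\mathcal D(\Phi)$. In the compact setting \AKMR{} kill the analogous twist with \cite[Lemma 3.1]{auckly2017} by pushing the relevant $3$-balls into $\partial A$; here there is no boundary, so I would need the open analogue asserting that a Dehn twist whose support can be pushed into the end of $A\cong\mathbb R^4$ is diffeotopic to the identity through $\Diff(\Phi)$. I expect this to follow from Gompf's end-summing and isotopy techniques in \cite{gompf2018} (the machinery already underlying Theorem \ref{gompf_main} and Lemma \ref{open_wreath_lemma}), but verifying that the unwinding isotopy can be kept inside $\Diff(\Phi)$, fixing $\epsilon$ and preserving the tentacles throughout, is the one genuinely technical point. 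Granting it, $\phi$ descends to a homomorphism $\mathbb Z_n\to\mathcal D(\Phi)$, and (P2) holds with $p$ any of the $n$ permuted indices, completing both cases.
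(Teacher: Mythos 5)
Your $n=0$ case is correct and is essentially the paper's own construction: a shear $g(x,y,z,w)=(x,f(x)+y,z,w)$ supported away from the end carrying $\epsilon$, translating a $\mathbb Z$-indexed family of proper tentacles by one; only your orientation conventions differ, and your observation that (P2) follows with $p=0$ from $\widehat\phi(k)p=k$ matches the paper exactly.

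The $n\ge2$ case, however, contains a genuine gap, and it is precisely the one you flagged: you never establish $f^n=1$ in $\mathcal D(\Phi)$. This is not a minor technicality to be deferred to \cite{gompf2018} --- it is the entire content of the lemma. Since every orientation-preserving diffeomorphism of $\mathbb R^4$ is diffeotopic to the identity (by an Alexander-type trick), all of the structure of $\mathcal D(\Phi)$ for an open block lives in the constraint that the diffeotopy fix $\epsilon$ and preserve the family $\{e_\lambda\}$ throughout; an ``open analogue of \cite[Lemma 3.1]{auckly2017}'' respecting this constraint is exactly what needs proof, there is no boundary to push the twist $3$-balls into, and nothing in the machinery behind Theorem \ref{gompf_main} or Lemma \ref{open_wreath_lemma} supplies it. The paper avoids the issue entirely: its proof of Lemma \ref{open_zn_case} disposes of $n\ge2$ in one line by invoking the compact Lemma \ref{zn_case}. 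The point you are missing is that a compact block converts directly into an open block: delete $\partial A$ (so $\intr A\cong\mathbb R^4$) and stretch each boundary ball $e_i(B^3)\subset\partial A$ along a collar $\partial A\times[0,1)$ into a proper tentacle $[0,\infty)\times B^3$; after isotoping the block diffeomorphisms to be product-like on the collar, the (P2) homomorphism $\phi\colon\mathbb Z_n\to\mathcal D(\Phi)$ of Lemma \ref{zn_case} carries over verbatim, with the relation $\phi'(n)=1$ already killed in the compact setting, where \AKMR{}'s lemma and the boundary are available. With that transfer observation, your one hard step disappears, and only the $n=0$ case requires a new construction --- which is exactly your (correct) shear.
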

\noindent Allowing manifolds to be open implies the case of $n=0$.
\begin{proof}
  The case of $n\ge2$ follows from Lemma \ref{zn_case}.
  Thus we consider the case of $n=0$.
  Let $f:\mathbb R\rightarrow[0,1]$ be a smooth function
  with $f=0$ on $(-\infty,-1]$ and $f=1$ on $[0,\infty)$.
  Let $e_i$ be the tubular neighborhood of $[0,\infty)\times\{i\}\times\{0\}\times\{0\}$ in $\mathbb R^4$
  and let $\epsilon$ be that of $(-\infty,-1]\times\{0\}\times\{0\}\times\{0\}$ in it.
  Then the tuple $\Phi:=(\mathbb R^4;\{e_i\}_{i\in\mathbb Z};\epsilon)$ is an open block.
  The diffeomorphism $g$ of $\Phi$ defined by $g(x,y,z,w):=(x,f(x)+y,z,w)$
  generates a (P2) homomorphism $\mathbb Z\rightarrow\mathcal D(\Phi)$.
\end{proof}

\begin{proof}[Proof of Theorem \ref{thm:app:wr}.]
  For any $n_1,\dots,n_r\in\mathbb N\cup\{0\}$,
  Lemma \ref{open_zn_case} and Lemma \ref{open_wreath_lemma} imply $\mathbb Z_{n_1}\wr\dots\wr\mathbb Z_{n_r}\in\gsw(1)$
  and thus the theorem follows from Theorem \ref{gompf_main}.
\end{proof}

\begin{rem}
  An analog of \cite[Theorem 4.6]{gompf2018} is also true,
  i.e., there exists an exotic $\mathbb R^4$ $R$ such that every group $G\in\gsw(2)$ has an embedding $G\rightarrow\mathcal D^\infty(R)$
  with $G\cap r(\mathcal D(R))=\{1\}$.
  See \cite{gompf2018} for details.
\end{rem}

\begin{que}
  Does there exist an exotic $\mathbb R^4$ whose diffeotopy group contains any finite (or even countable) group?
  If this is false, for any finite (or countable) group $G$,
  does there exist an exotic $\mathbb R^4$ whose diffeotopy group contains $G$?
\end{que}

\section*{Acknowledgements}
The author would like to thank his advisor Kenta Hayano for encouragement and useful comments.

\bibliographystyle{plain}
\bibliography{ref}

\begin{thebibliography}{10}

\bibitem{akbulut1991}
Selman Akbulut.
\newblock A fake compact contractible {$4$}-manifold.
\newblock {\em J. Differential Geom.}, 33(2):335--356, 1991.

\bibitem{akbulut_yasui2008}
Selman Akbulut and Kouichi Yasui.
\newblock Corks, plugs and exotic structures.
\newblock {\em J. G\"{o}kova Geom. Topol. GGT}, 2:40--82, 2008.

\bibitem{akbulut_yasui2009}
Selman Akbulut and Kouichi Yasui.
\newblock Knotting corks.
\newblock {\em J. Topol.}, 2(4):823--839, 2009.

\bibitem{auckly2015}
Dave Auckly, Hee~Jung Kim, Paul Melvin, and Daniel Ruberman.
\newblock Stable isotopy in four dimensions.
\newblock {\em J. Lond. Math. Soc. (2)}, 91(2):439--463, 2015.

\bibitem{auckly2017}
Dave Auckly, Hee~Jung Kim, Paul Melvin, and Daniel Ruberman.
\newblock Equivariant corks.
\newblock {\em Algebr. Geom. Topol.}, 17(3):1771--1783, 2017.

\bibitem{curtis1996}
C.~L. Curtis, M.~H. Freedman, W.~C. Hsiang, and R.~Stong.
\newblock A decomposition theorem for {$h$}-cobordant smooth simply-connected
  compact {$4$}-manifolds.
\newblock {\em Invent. Math.}, 123(2):343--348, 1996.

\bibitem{freedman1982}
Michael~Hartley Freedman.
\newblock The topology of four-dimensional manifolds.
\newblock {\em J. Differential Geom.}, 17(3):357--453, 1982.

\bibitem{gompf2017}
Robert~E. Gompf.
\newblock Infinite order corks.
\newblock {\em Geom. Topol.}, 21(4):2475--2484, 2017.

\bibitem{gompf2017_inf_cork_hdl}
Robert~E. Gompf.
\newblock Infinite order corks via handle diagrams.
\newblock {\em Algebr. Geom. Topol.}, 17(5):2863--2891, 2017.

\bibitem{gompf2018}
Robert~E. Gompf.
\newblock Group actions, corks and exotic smoothings of {$\mathbb{R}^4$}.
\newblock {\em Invent. Math.}, 214(3):1131--1168, 2018.

\bibitem{gompf_stipsicz}
Robert~E. Gompf and Andr\'{a}s~I. Stipsicz.
\newblock {\em {$4$}-manifolds and {K}irby calculus}, volume~20 of {\em
  Graduate Studies in Mathematics}.
\newblock American Mathematical Society, Providence, RI, 1999.

\bibitem{matveyev1996}
R.~Matveyev.
\newblock A decomposition of smooth simply-connected {$h$}-cobordant
  {$4$}-manifolds.
\newblock {\em J. Differential Geom.}, 44(3):571--582, 1996.

\bibitem{melvin_schwartz}
Paul {Melvin} and Hannah {Schwartz}.
\newblock {Higher order corks}.
\newblock {\em arXiv e-prints}, February 2019.
\newblock 1902.02840v1.

\bibitem{suzuki}
Michio Suzuki.
\newblock {\em Group theory. {I}}, volume 247 of {\em Grundlehren der
  Mathematischen Wissenschaften}.
\newblock Springer-Verlag, Berlin-New York, 1982.
\newblock Translated from the Japanese by the author.

\bibitem{tange2015}
Motoo Tange.
\newblock A plug with infinite order and some exotic 4-manifolds.
\newblock {\em J. G\"{o}kova Geom. Topol. GGT}, 9:1--17, 2015.

\bibitem{tange2016nonext}
Motoo {Tange}.
\newblock {Non-existence theorems on infinite order corks}.
\newblock {\em arXiv e-prints}, September 2016.
\newblock 1609.04344v2.

\bibitem{tange2017}
Motoo Tange.
\newblock Finite order corks.
\newblock {\em Internat. J. Math.}, 28(6):1750034, 26, 2017.

\bibitem{tange2016arxiv}
Motoo Tange.
\newblock {Notes on Gompf's infinite order corks}.
\newblock {\em arXiv e-prints}, January 2019.
\newblock 1609.04345v5.

\bibitem{taubes1987}
Clifford~Henry Taubes.
\newblock Gauge theory on asymptotically periodic {$4$}-manifolds.
\newblock {\em J. Differential Geom.}, 25(3):363--430, 1987.

\bibitem{yasui2015}
Kouichi {Yasui}.
\newblock {Corks, exotic 4-manifolds and knot concordance}.
\newblock {\em arXiv e-prints}, September 2017.
\newblock 1505.02551v4.

\bibitem{yasui2019}
Kouichi {Yasui}.
\newblock {Nonexistence of twists and surgeries generating exotic 4-manifolds}.
\newblock {\em arXiv e-prints}, September 2018.
\newblock 1610.04033v3.

\end{thebibliography}

\end{document}